\def\pa{\partial}
\def\var{\varepsilon}
\def\R{{\Bbb R}}
\def\Z{{\Bbb Z}}
\def\la{\langle}
\def\ra{\rangle}
\newcommand{\be}{\begin{equation}}
\newcommand{\ee}{\end{equation}}
\newcommand{\ba}{\begin{aligned}}
\newcommand{\ea}{\end{aligned}}
\newtheorem{Defi}{Definition}
\newtheorem{Thm}{Theorem}
\newtheorem{Prop}[Thm]{Proposition}
\newtheorem{Cor}{Corollary}[Thm]
\newtheorem{lem}[Thm]{Lemma}
\theoremstyle{remark}
\newtheorem{rem}{Remark}[Thm]
\newcommand{\ben}{\begin{eqnarray}}
\newcommand{\een}{\end{eqnarray}}
\newcommand{\beno}{\begin{eqnarray*}}
\newcommand{\eeno}{\end{eqnarray*}}
\begin{document}

\title[Large time behavior of Landau equation]{Estimates for the large time behavior\\ of the Landau equation\\ in the Coulomb case}

\author{K.~Carrapatoso}
\author{L.~Desvillettes}
\author{L.~He}

\address[Kleber Carrapatoso]{Centre de Math\'ematiques et de Leurs Applications, ENS Cachan, CNRS, Universit\'e Paris-Saclay, F-94235 Cachan, France.}
\email{carrapatoso@cmla.ens-cachan.fr}

\address[Laurent Desvillettes]{Univ. Paris Diderot, Sorbonne Paris Cit\'e, Institut de Math\'ematiques de Jussieu - Paris Rive Gauche, UMR 7586, CNRS, Sorbonne Universit\'es, UPMC Univ. Paris 06, F-75013, Paris, France.} 
\email{desvillettes@math.univ-paris-diderot.fr}

\address[Lingbing He]{Department of Mathematical Sciences, Tsinghua University, Beijng, 100084, P.R. China.}
\email{lbhe@math.tsinghua.edu.cn}

\date{\today}

\begin{abstract}
This work deals with the large time behaviour of the spatially homogeneous Landau equation with Coulomb potential. 
Firstly, we obtain a bound from below of the entropy dissipation $D(f)$ by a weighted relative Fisher information of $f$ with respect to the associated Maxwellian distribution, which leads to a variant of Cercignani's conjecture thanks to 
a logarithmic Sobolev inequality. Secondly, we prove the propagation of  polynomial and stretched exponential moments with an
at most linearly growing in time rate. As an application of these estimates, we show the convergence of any ($H$- or weak) solution to the Landau equation with Coulomb potential to the associated Maxwellian equilibrium with an explicitly computable rate, assuming initial data with finite mass, energy, entropy and some higher $L^1$-moment. More precisely, if the initial data have some (large enough) polynomial $L^1$-moment, then we obtain an algebraic decay. If the initial data have a stretched exponential $L^1$-moment, then we recover a stretched exponential decay. 
\end{abstract}

\keywords{Landau equation, Landau operator, entropy dissipation, degenerate diffusion, Coulomb interaction, large time behaviour, convergence to equilibrium}

\subjclass[2010]{35B65, 35K67, 45G05, 76P05, 82C40, 82D10}

\maketitle


\section{Introduction} \label{intro}
The Landau equation is a fundamental model in kinetic theory that describes the evolution in time of a plasma due to collisions between charged particles.
\par
We consider in this work the spatially homogeneous Landau equation with Coulomb potential (cf. \cite{lipi,CC,DV1})
\be\label{landau}
\partial_t f = Q(f,f),
\ee
which is complemented with initial data $f_0=f_0(v) \ge 0$. Here $f := f(t,v) \ge 0$ stands for the distribution of particles that at time $t \in \R_+$ possess the velocity $v \in \R^3$. The Landau operator $Q$ is a bilinear operator acting only on the velocity variable $v$. It writes
\be\label{Q}
Q(f,f)(v) = \nabla \cdot \int_{\R^3} a(v-w) \{ f(w) \nabla f(v) - \nabla f(w) f(v)     \} \, dw,
\ee
where $a$ is a matrix-valued function that is symmetric, (semi-definite) positive.
It depends on the interaction potential between particles, and is defined by (for $i,j =1, 2, 3$)
\be\label{a}
a_{ij}(z) = |z|^{\gamma+2}\, \Pi_{ij} (z), \quad \Pi_{ij}(z)= \delta_{ij} - \frac{z_i z_j}{|z|^2}, \quad  -4 < \gamma \le 1.
\ee
Observe that $\Pi(z) := (\Pi_{ij}(z))_{i,j = 1,2,3}$ is the orthogonal projection onto $z^\perp$. One usually classifies the different cases as follows: hard potentials $0<\gamma \le 1$, Maxwellian molecules $\gamma = 0$, moderately soft potentials $-2 \le \gamma < 0$ and very soft potentials $-4 < \gamma <-2$. Note that the very soft potentials include the Coulomb potential $\gamma=-3$. 
From now on, for the sake of clarity and because it is the most physically interesting case, we shall only consider in this work the Coulomb potential case $\gamma=-3$, except when moment estimates are concerned. 
It is however worth mentioning that our methods and results can be straightforwardly adapted to the soft potentials case $-4 < \gamma < 0$, as explained in more details in the remarks after our main theorems.

One usually introduces the quantities
$$
b_i(z) = \sum_{j=1}^3 \partial_j a_{ij}(z) = -2 \,z_i\, |z|^{-3}, \quad 
c(z) = \sum_{i=1}^3\sum_{j=1}^3\partial_{ij} a_{ij}(z) = -8 \pi\, \delta_0(z),
$$
so that the Landau operator can be written as
\be\label{Qbis}
\ba
Q(f,f) 
&= \sum_{i=1}^3 \partial_i \bigg( \sum_{j=1}^3(a_{ij}*f)\, \partial_j f - (b_i *f)\, f  \bigg)   \\
&= \sum_{i=1}^3\sum_{j=1}^3(a_{ij}*f) \,\partial_{ij}f + 8\pi \,f^2 .
\ea
\ee

\medskip

At the formal level, we can write thanks to \eqref{Q} a weak formulation of the Landau operator $Q$, for a test function $\varphi$, 
in the following way:
\be\label{Qweak1}
\ba
&\int_{\R^3} Q(f,f) (v) \varphi(v) \, dv \\
&\qquad
= - \frac12 \, \sum_{i=1}^3\sum_{j=1}^3 \iint_{\R^3 \times \R^3} a_{ij}(v-w) \left\{ \frac{\partial_i f}{f}(v) - \frac{\partial_i f}{f}(w)  \right\} \left\{ \partial_j \varphi(v) - \partial_j \varphi(w)  \right\}   f(v) f(w) \, dv \, dw .
\ea
\ee
Another weak formulation, based on \eqref{Qbis}, also holds at the formal level:
\be\label{Qweak2}
\ba
\int_{\R^3} Q(f,f) (v) \varphi(v) \, dv 
&= \frac12 \sum_{i=1}^3\sum_{j=1}^3 \iint_{\R^3 \times \R^3} a_{ij}(v-w) \big\{ \partial_{ij}\varphi(v) + \partial_{ij} \varphi(w) \big\} \, f(v) f(w) \, dv \, dw \\
&\quad
+ \sum_{i=1}^3 \iint_{\R^3 \times \R^3} b_{i}(v-w) \big\{ \partial_{i}\varphi(v) - \partial_{i} \varphi(w) \big\} \, f(v) f(w) \, dv \, dw.
\ea
\ee
From the weak formulation (\ref{Qweak1}), we can easily deduce some fundamental properties of the Landau operator $Q$. The operator indeed conserves (at the formal level) mass, momentum and energy, more precisely
\be\label{cons}
\int_{\R^3} Q(f,f)(v) \varphi(v) \, dv = 0 \quad\text{for}\quad \varphi(v) = 1, v_i, \frac{|v|^2}2.
\ee
We also deduce from (\ref{Qweak1}), at the formal level, the entropy 
structure
by taking the test function $\varphi(v) = \log f(v)$, 
that is 
\be\label{D(f)}
D(f)  := - \int Q(f,f)(v) \log f(v) \, dv
\ee
$$ =  \frac12 \sum_{i=1}^3\sum_{j=1}^3\iint_{\R^3 \times \R^3} a_{ij}(v-w) \left\{ \frac{\partial_i f}{f}(v) - \frac{\partial_i f}{f}(w)  \right\} \left\{ \frac{\partial_j f}{f}(v) - \frac{\partial_j f}{f}(w)  \right\}   \, f(v)\, f(w) \, dv \, dw.  $$
Note that $D(f) \ge 0$ since the matrix $a$ is (semi-definite) positive. 
It also follows (see for example \cite{DV2}) that any equilibrium (that is, any $f$ such that $D(f)=0$) is a Maxwellian distribution
\be\label{MaxGen}
\mu_{\rho,u,T}(v) = \frac{\rho}{(2\pi T)^{3/2}} \, e^{-\frac{|v-u|^2}{2T}},
\ee
where $\rho\ge 0$ is the density, $u \in \R^3$ the mean velocity and $T>0$ the temperature, defined by
$$
\rho = \int_{\R^3} f(v) \, dv, \quad 
u = \frac{1}{\rho} \int_{\R^3} v \,f(v) \, dv, \quad
T = \frac{1}{3 \rho} \int_{\R^3} |v-u|^2\, f(v) \, dv.
$$
As a consequence of the properties above at the level of the operator, 
we can obtain the corresponding properties (at the formal level) for
the solutions of the spatially homogeneous Landau equation (\ref{landau}),
that is, the conservation of mass, momentum and energy
\be\label{conssh}
\forall t \ge 0, \qquad \rho(t)  = \rho (0), \quad u(t)=u(0),
\quad T(t)=T(0),
\ee
on one hand, and the entropy property on the other hand
\be\label{entr}
\frac{d}{dt}H(f(t,\cdot)) = -D(f(t,\cdot)) \le 0,
\ee
where $H(f) := \int f(v) \log f(v) \, dv$ is the entropy and 
$D(f)$, defined by (\ref{D(f)}), is the entropy dissipation.
\medskip

Throughout this paper, we shall always assume that $f_0 \ge 0$ and $f_0 \in L^1_2 \cap L \log L (\R^3)$. Furthermore, in most of the paper, we suppose, without loss of generality, that $f_0$ satisfies the normalization identities 
\be\label{f0}
\int_{\R^3} f_0(v)\, dv = 1, \quad \int_{\R^3} f_0(v)\, v\, dv =0, 
\quad \int_{\R^3} f_0(v)\, |v|^2 \, dv =3,
\ee
which can be rewritten $\rho(0)=1$, $u(0)=0$, $T(0)=1$.
Finally, we denote by $\mu(v) = (2\pi)^{-3/2} e^{-|v|^2/2}$ the Maxwellian distribution (centred reduced Gaussian) with same mass, momentum and energy as $f_0$ satisfying (\ref{f0}).
\medskip

Let us briefly recall some existing results on the Landau equation \eqref{landau} with Coulomb potential. Villani~\cite{Vi} proved global existence of the so-called $H$-solutions for initial data with finite mass, energy and entropy. More recently, the second author \cite{D} proved that $H$-solutions are in fact weak solutions (in the usual sense), thanks to a new estimate for the entropy dissipation $D(f)$. More precisely it is obtained in \cite{D} that there is an explicitly computable constant $C_0 = C_0(\bar{H}) >0$ such that, for all (normalized) $f\ge 0$ satisfying $H(f) \le \bar{H}$, the following inequality holds:
\be\label{L3<D}
\| f \|_{L^3_{-3}} \le C_0 \,( 1 + D(f) ),
\ee
where (for any $p \in [1, +\infty[$, $q \in \R$) the $L^p_{q}$ norm is defined by
$$ \| f \|_{L^p_{q}}^p = \int_{\R^3} |f(v)|^p \, (1 + |v|^2)^{pq/2}\, dv . $$
Therefore, since (for $H$ solutions of the spatially homogeneous Landau equation)  $D(f) \in L^1_t (]0,\infty[)$ thanks to identity (\ref{entr}), we obtain that any $H$ solution of this equation lies in $ L^1_{loc}([0,\infty); L^3_{-3}(\R^3))$, which is sufficient to define weak solutions in the usual sense (using the weak form \eqref{Qweak2}), see \cite{D} for more details. We also quote \cite{Vi2} for 
renormalized solutions in the spatially inhomogeneous context, and \cite{Alli} for local in time solutions.
\medskip

Let us mention the results concerning the well-posedness issue. Fournier \cite{Fournier} obtained that uniqueness holds in the class $L^\infty_{loc}([0,\infty); L^1_2(\R^3)) \cap L^1_{loc}([0,\infty);L^\infty(\R^3))$, and this result implies a local well-posedness result assuming further that the initial data lie in $L^\infty (\R^3)$, thanks to the local existence result of Arsenev-Peskov~\cite{Arsenev} for such initial data. We also refer to \cite{Guo} and  \cite{heyang} for the global well-posedness and the local well-posedness for the inhomogeneous equation in weighted Sobolev spaces, as well as to \cite{AMUXY1,AMUXY2,GreStr} for the non-cutoff Boltzmann equation, whose structure shares similarities with the Landau equation.

\medskip

Concerning the large time behaviour issue, we shall mention some known results for all kind of potentials. In the spatially homogeneous case, Villani and the second author~\cite{DV2} proved exponential decay to equilibrium in the Maxwellian molecules case $\gamma=0$, and algebraic decay for hard potentials $0 < \gamma \le 1$. Later, the first author~\cite{KC1} proved exponential decay for hard potentials. Toscani and Villani~\cite{TosVi-slow} proved algebraic decay for mollified soft potentials $-3 < \gamma < 0$ (i.e.\ truncating the singularity of \eqref{a} at the origin) excluding the Coulomb case, and the first author~\cite{KC2} proved polynomial convergence for moderately soft potentials $-2 < \gamma < 0$ and exponential convergence in the case $-1 < \gamma < 0$. 
Some results were also obtained in the spatially inhomogeneous case. 
For potentials in the range $-2 \le \gamma \le 1$ and in a close-to-equilibrium framework, exponential decay to equilibrium has been established by Mouhot and Neumann~\cite{MouNeu}, 
Yu~\cite{Yu}, and more recently by Tristani, Wu and the first author~\cite{CTW}. 
Still in a perturbative framework and for the Coulomb case, Guo and Strain~\cite{GS} (see also \cite{GS2}) proved stretched exponential decay to equilibrium in a high-order Sobolev space with fast decay in the velocity variable. 
Also, for general initial data and in the Coulomb case, Villani and the second author~\cite{DV-boltzmann} proved algebraic convergence to equilibrium for (uniformly w.r.t time) {\it{a priori}} smooth solutions.

\medskip

The aim of this work is to study the large time behaviour of solutions to the spatially homogeneous Landau
equation in the Coulomb case. Our proof is based on an entropy-entropy dissipation method.
\par
This method (and its variants) has been widely used to tackle the large time behaviour of many models in kinetic theory (cf. in particular \cite{TO, DV2, TV1}, and earlier attempts like \cite{D2})
as well as in many other PDEs or integral equations (cf. for example \cite{AB} or \cite{DF1}).  It is important to emphasize that this method can handle nonlinear equations directly (that is, no linearization is involved).
\par
 Roughly speaking, it consists in looking for some Lyapunov functional for the evolution equation (usually called entropy) and then in computing its associated dissipation (usually called entropy dissipation). Then, the existence of  functional inequalities relating the entropy dissipation to the entropy itself is investigated. 
When the method is successful, such 
inequalities enable to close a differential inequality for the entropy, and yield the large time behaviour.
\par
When the functional inequality involves quantities which grow slowly (that is, polynomially) with respect to time along the flow
of the equation, the entropy-entropy dissipation method is said to be ``with slowly growing {\it{a priori}} bounds''. We refer
for example to \cite{TosVi-slow} and \cite{DF2} for such a situation. In this work, we also use this variant of
the entropy-entropy dissipation method.
\par
In kinetic theory, more precisely when Boltzmann and Landau equations are concerned, the functional inequality that hopefully links the entropy dissipation and the entropy was suggested by Cercignani (cf. \cite{cerc}), and has been known since as Cercignani's conjecture. We refer
to \cite{DMV} for a detailed description of the network of conjectures now bearing this name. We present in this work a variant
of the so-called weak Cercignani's conjecture for the Landau equation (with Coulomb potential).

\section{Main results}\label{sec:main}
We state in this section our main results.
Hereafter we shall denote polynomial $L^1$-moments (for $\ell\in \R$) by
\be\label{moment-poly}
M_\ell (f) := \| f \|_{L^1_\ell(\R^3)} := \int_{\R^3} \la v \ra^\ell \, f(v) \, dv, \quad \la v \ra := (1 + |v|^2)^{1/2},
\ee
as well as stretched exponential $L^1$-moments (for $s>0$, $\kappa\in \R$) by
\be\label{moment-exp}
M_{s,\kappa} (f) := \| f \|_{L^1(e^{\kappa \la v \ra^s}\,dv)} := \int_{\R^3} e^{\kappa \la v \ra^s}\, f(v) \, dv.
\ee

\medskip

Our first main result is a new estimate that bounds from below the entropy dissipation $D(f)$ (defined in \eqref{D(f)}) by a weighted relative Fisher information of $f$ with respect to the associated Maxwellian distribution $\mu$, provided that 
some higher moment of $f$ is controlled. 

\begin{Thm}\label{thm:entropy}
One can find $C:= C(\bar{H})>0$ depending only on $\bar{H}$
such that for all $f\ge 0$ satisfying (the normalization of mass, momentum and energy)
\begin{equation} \label{un}
\int_{\R^3} f(v)\, dv = 1, \quad \int_{\R^3} f(v)\, v\, dv =0, 
\quad \int_{\R^3} f(v)\, |v|^2 \, dv =3,
\end{equation}
and also satisfying (an upper bound on the entropy)
\begin{equation} \label{unbis}
H(f) := \int_{\R^3} f(v)\, \log f(v) \, dv \le \bar{H},
\end{equation}
the following inequality holds:
\begin{equation} \label{infini}  
D(f) \ge C(\bar{H}) \, (M_5(f))^{-1}\, \int_{\R^3} f(v)\,  \bigg| \frac{\nabla f(v)}{f(v)} + v \bigg|^2 \, \la v \ra^{-3} \,dv .
\end{equation}
\end{Thm}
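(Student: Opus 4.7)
The plan is to reformulate the dissipation $D(f)$ in terms of $p(v) := \nabla f(v)/f(v) + v = \nabla\log(f/\mu)(v)$ and then compare it, via the coercivity of the averaged diffusion matrix $\bar a(v) := (a*f)(v)$, to a weighted $L^2$ norm of $p$. Using $\nabla\log f = p - v$ together with the orthogonality $\Pi(v-w)(v-w) = 0$ in formula \eqref{D(f)} gives
$$D(f) = \tfrac12 \iint_{\R^3\times\R^3} |v-w|^{-1}\,\bigl|\Pi(v-w)(p(v) - p(w))\bigr|^2\, f(v) f(w)\, dv\, dw,$$
a nonlocal Dirichlet form in the curl-free vector field $p$ that vanishes when $f = \mu$. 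Note also that the normalization \eqref{un} forces $\int f\,p\, dv = 0$, so $p$ is automatically centered in $L^2(f\,dv)$.

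The supporting ingredient is the classical spreading/coercivity lemma: under \eqref{un}--\eqref{unbis} there exists $c_0 = c_0(\bar H) > 0$ such that $\sum_{ij}\bar a_{ij}(v)\xi_i\xi_j \ge c_0\,\la v \ra^{-3}\,|\xi|^2$ for every $v,\xi\in\R^3$, coming from the fact that mass/energy/entropy bounds prevent $f$ from concentrating on any line. Modulo this, the theorem reduces to establishing
$$D(f) \ge c(\bar H)\, M_5(f)^{-1}\, \int f(v)\, \bar a(v)(p(v),p(v))\, dv,$$
since a pointwise application of the coercivity lemma then produces the weighted Fisher integrand $|p|^2\,\la v\ra^{-3}$. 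Expanding the squared difference, $D(f) = A - B$ with $A = \int f\,\bar a(p,p)\,dv$ and $B = \iint |v-w|^{-1}[\Pi(v-w)p(v)]\cdot[\Pi(v-w)p(w)]\,f(v)f(w)\,dv\,dw$; both $A, B \ge 0$ and a naive Cauchy--Schwarz only yields the useless $|B|\le A$. To squeeze a nontrivial fraction of $A$ out of $A - B$, I would use the gradient representation $p(v) - p(w) = \int_0^1 \nabla p(w + t(v-w))(v-w)\,dt$ combined with the cancellation $\Pi(v-w)(v-w) = 0$ (so only the off-diagonal part of $D^2\log(f/\mu)$ contributes), the $L^3_{-3}$ regularity of $f$ granted by \eqref{L3<D}, and Hardy/interpolation arguments in which the polynomial moment $M_5(f)$ appears as the penalty for the weight $\la v\ra^{-3}$.

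The main obstacle is precisely this last step: the symmetric structure of the deficit $A - B$ defeats any pointwise Young inequality, because after the $v\leftrightarrow w$ symmetrization the resulting bound collapses to a tautology. One therefore has to genuinely exploit both the curl-free nature of $p$ and the additional $L^3_{-3}$ regularity of $f$ supplied by \eqref{L3<D} in order to break the symmetry and produce the quantitative prefactor $M_5(f)^{-1}$. Once this is achieved, combining with the ellipticity of $\bar a$ gives
$$D(f)\,\ge\,c(\bar H)\,M_5(f)^{-1}\int f(v)\,\bar a(v)(p(v),p(v))\,dv\,\ge\,c'(\bar H)\,M_5(f)^{-1}\int f(v)\,|p(v)|^2\,\la v\ra^{-3}\,dv,$$
which is the claimed inequality \eqref{infini}.
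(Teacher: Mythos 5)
Your reduction correctly rewrites the entropy dissipation in terms of $p(v) = \nabla\log(f/\mu)(v)$, and the pointwise coercivity $\bar a(v)\ge c_0(\bar H)\,\la v\ra^{-3}\,\mathrm{Id}$ for the Coulomb case is indeed a true and provable spreading estimate. However, what you have produced is a \emph{programme}, not a proof: you yourself flag that the central step, namely
$$D(f) = \int f\,\bar a(p,p)\,dv - \iint |v-w|^{-1}[\Pi(v-w)p(v)]\cdot[\Pi(v-w)p(w)]\,f f\,dv\,dw \;\ge\; c(\bar H)\,M_5(f)^{-1}\int f\,\bar a(p,p)\,dv,$$
is exactly where the whole difficulty of the theorem lies, and you give no estimate that extracts a nontrivial fraction of $A = \int f\,\bar a(p,p)$ from the deficit $A - B$. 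Saying that ``one therefore has to genuinely exploit'' the curl-free structure of $p$ and the $L^3_{-3}$ bound is naming the obstacle, not clearing it. As it stands, the proposal has a genuine gap at the heart of the argument.

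There is also a structural problem with the specific tool you propose. Theorem~\ref{thm:entropy} is a purely static functional inequality: it must hold for \emph{every} $f$ satisfying \eqref{un}--\eqref{unbis}, regardless of whether $f$ solves the Landau equation. The estimate \eqref{L3<D} reads $\|f\|_{L^3_{-3}}\le C_0(1 + D(f))$, so any attempt to use it inside the proof reintroduces $D(f)$ on the right-hand side and either becomes circular or destroys the claimed form $D(f)\ge C\,M_5(f)^{-1}\,I_{-3}(f|\mu)$. The paper's proof, correspondingly, never invokes any $L^p$ regularity of $f$.

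For contrast, the argument the paper actually runs is quite different in spirit. It works with the antisymmetric quantities $R_{ij}^f(v,w) = (v_i-w_i)(\partial_j\log f(v)-\partial_j\log f(w)) - (v_j-w_j)(\partial_i\log f(v)-\partial_i\log f(w))$, for which $D(f) = \tfrac14\sum_{ij}\iint f f\,|R_{ij}^f|^2\,|v-w|^{-3}$. Integrating the expansion of $R_{ij}^f$ against $f(w)\,dw$, $f(w)w_i\,dw$, $f(w)w_j\,dw$ and inverting a $2\times 2$ system by Cramer's rule (Proposition~\ref{prop:Rij}) expresses $\partial_i f/f + v_i$ \emph{exactly} in terms of linear combinations of $v_j$, $v_i$ with coefficients given by the pressure tensor deviations $(P_{ij}^f, P_{ii}^f - 1)$, plus integrals of $R_{ij}^f$. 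The weight $\la v\ra^{-3}$ and the factor $M_5(f)$ both arise from a single Cauchy--Schwarz split of the $R_{ij}^f$ integral against $f(w)\,|v-w|^{\pm 3}$, while the pressure deviations are themselves bounded by $C(\bar H)\,M_5(f)\,D(f)$ via a further functional inequality on $S_f$ (Proposition~\ref{prop:Deltabis}) that only uses the nonconcentration estimate implied by the entropy bound. There is no coercivity estimate on $\bar a = a*f$ anywhere in the paper's proof. If you want to salvage your route, you would need to supply the missing nontrivial lower bound on $A - B$ without appealing to the equation or to $D(f)$ itself; the paper's mechanism for doing so is precisely the Cramer inversion, which bypasses the symmetric cancellation that defeats your decomposition.
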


\begin{rem}\label{rem:thm:entropy}
\begin{enumerate}

\item  We consider in this work only the case of Coulomb potential, namely $\gamma = - 3$ in the definition of the matrix $a$ given by \eqref{a}. A straightforward adaptation also gives analogous results for general soft potentials $-4 < \gamma < 0$. 
In this situation, estimate (\ref{infini}) becomes
\begin{equation} \label{infinipr}   
D(f) \ge C(\bar{H}) \, (M_{2 - \gamma}(f))^{-1}\, \int_{\R^3} f(v)\,  \bigg| \frac{\nabla f(v)}{f(v)} + v \bigg|^2 \, \la v \ra^{\gamma} \,dv .
\end{equation}
 
 
\item

We recall that in the case of Maxwell molecules, that is, $\gamma =0$, estimate (\ref{infinipr}) is already known (cf. \cite{DV2}), and does not involve any higher moment of $f$ (it involves only $M_2(f)$).

\end{enumerate}

\end{rem}

The proof of this theorem is inspired by the arguments developed by the second author in \cite{D}, where it is obtained that the weighted (non relative) Fisher information of $f$ can be bounded from above by the entropy dissipation $D(f)$ plus some constant (depending on the mass and energy of $f$, when $f$ is not normalized). There are nevertheless important differences between the computations of \cite{D} and the proof given here. First, since we allow here the presence in the estimate of a moment of high order, one can use simpler multiplicators than in \cite{D} (no Maxwellian with an arbitrary temperature is introduced in the proof, cf. also \cite{desv_dspde3}). Secondly, and most importantly, one has to keep the exact value of the coefficients appearing in front 
of linear terms like $v_i$, whereas those terms were estimated without too much care in \cite{D}.
\medskip

As a consequence of estimate (\ref{infini}), we shall prove a variant of the so-called weak Cercignani's conjecture for the Landau equation (with Coulomb potential). We refer to \cite{DMV} for a systematic description of Cercignani's conjecture. Let us say here that the term ``weak'' means that some quantity other than the mass, energy and (upper bound on the) entropy plays a role 
in the relationship between $D(f)$ and a weighted version of the relative entropy. Indeed, we need here a control on the fifth moment of
$f$ (that is, $M_5(f)$). This result (variant of the weak Cercignani's conjecture) for the Landau equation (with Coulomb potential) is summarized
in the corollary below:

\begin{Cor}\label{cerc}
One can find $C:= C(\bar{H})>0$ depending only on $\bar{H}$
such that for all $f\ge 0$ satisfying (the normalization of mass, momentum and energy) (\ref{un}) and also satisfying (an upper bound on the entropy) (\ref{unbis}), 
the following inequality holds: 
\begin{equation} \label{cc}  
\ba
D(f) \ge C(\bar{H}) \, (M_5(f))^{-1} \,  \int  \left\{   f
\log \left( \frac{Z_1}{Z_2} \frac{f}{\mu}  \right) + \frac{Z_2}{Z_1} \mu - f \right\} \la v \ra^{-3} \, dv ,
\ea
\end{equation}
with $Z_1 = \int \la v \ra^{-3} \mu$ and $Z_2 = \int \la v \ra^{-3} f$. As a consequence, for any $R>0$ (and some absolute constant $C>0$)
\be\label{eq:D<H}   
\ba
D(f) 
&\ge C(\bar{H}) \, (M_5(f))^{-1} \, R^{-3} 
\Bigg( \int  f \log (f / \mu) \, dv 
- \int_{\la v \ra \ge R}  f \log f  \, dv \\
&\qquad\qquad\qquad
-C  \int_{\la v \ra \ge R} \la v \ra^2 \, f \, dv 
- C \int_{\la v \ra \ge R} \mu \, dv \Bigg).
\ea
\ee

\end{Cor}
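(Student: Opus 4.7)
The plan is to combine Theorem \ref{thm:entropy} with a logarithmic Sobolev inequality (LSI) for the weighted Gaussian measure
\[ d\tilde\mu := Z_1^{-1}\la v\ra^{-3}\mu(v)\,dv \]
in order to establish (\ref{cc}), and then to derive (\ref{eq:D<H}) by a pointwise truncation of the weight together with an elementary estimation of the resulting tails.

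For (\ref{cc}), I first observe that $\nabla\log\mu(v)=-v$, so the integrand of (\ref{infini}) equals $f|\nabla\log(f/\mu)|^2$. Setting $g:=f/\mu$ and $\bar g:=\int g\,d\tilde\mu = Z_2/Z_1$, a change of variables gives
\[ \int f|\nabla\log(f/\mu)|^2\la v\ra^{-3}\,dv = Z_1\int\frac{|\nabla g|^2}{g}\,d\tilde\mu . \]
The probability measure $\tilde\mu$ has potential $V(v)=|v|^2/2+3\log\la v\ra$, whose Hessian in radial coordinates has eigenvalues $V''(r)=(r^4-r^2+4)/(1+r^2)^2$ and $V'(r)/r=(r^2+4)/(1+r^2)$ (with multiplicity two), both bounded below by $5/8$; hence $\nabla^2 V\ge(5/8)I$ uniformly, and Bakry-\'Emery yields an LSI for $\tilde\mu$ with an absolute constant $C_{LS}$. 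Combined with Theorem \ref{thm:entropy}, this gives
\[ D(f)\ge C(\bar H)(M_5(f))^{-1}\int f\log\bigl((Z_1/Z_2)(f/\mu)\bigr)\la v\ra^{-3}\,dv . \]
Since $\int[(Z_2/Z_1)\mu-f]\la v\ra^{-3}\,dv=0$ by the definitions of $Z_1,Z_2$, I add this zero to the integrand and obtain (\ref{cc}); its manifest non-negativity is then the elementary inequality $x\log(x/y)+y-x\ge 0$ with $x=f$, $y=(Z_2/Z_1)\mu$.

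For (\ref{eq:D<H}), denote the non-negative integrand of (\ref{cc}) by $\phi$. The pointwise bound $\la v\ra^{-3}\ge R^{-3}\mathbf{1}_{\la v\ra\le R}$ yields
\[ D(f)\ge C(\bar H)(M_5(f))^{-1}R^{-3}\Bigl(\int\phi\,dv-\int_{\la v\ra\ge R}\phi\,dv\Bigr) . \]
A direct computation gives $\int\phi\,dv=\int f\log(f/\mu)\,dv +(Z_2/Z_1-1-\log(Z_2/Z_1))\ge \int f\log(f/\mu)\,dv$ from $t-1-\log t\ge 0$. For the tail term, I expand $\phi=f\log f-f\log\mu-f\log(Z_2/Z_1)+(Z_2/Z_1)\mu-f$ and estimate each piece using $-\log\mu\le C\la v\ra^2$; the fact that $Z_2$ is bounded below by an absolute positive constant (by Chebyshev applied to $f$, whose second moment is fixed to $3$), which makes $Z_2/Z_1$ and $|\log(Z_2/Z_1)|$ of order one; and the absorption $\int_{\la v\ra\ge R}f\le\int_{\la v\ra\ge R}\la v\ra^2 f$ since $\la v\ra\ge 1$. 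Collecting these bounds yields
\[ \int_{\la v\ra\ge R}\phi\,dv\le\int_{\la v\ra\ge R}f\log f\,dv+C\int_{\la v\ra\ge R}\la v\ra^2 f\,dv+C\int_{\la v\ra\ge R}\mu\,dv , \]
and substitution into the previous inequality gives (\ref{eq:D<H}).

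The main obstacle is the weighted LSI: a priori, neither Bakry-\'Emery nor Holley-Stroock applies to a Gaussian perturbed by the non-log-concave factor $\la v\ra^{-3}$, but, remarkably, the full product potential $V$ remains uniformly convex, the tangential eigenvalue $V'(r)/r$ compensating the otherwise-indefinite radial contribution of $3\log\la v\ra$. Once this observation is made, the rest is essentially bookkeeping.
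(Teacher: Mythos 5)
Your proof is correct and follows essentially the same route as the paper: Theorem~\ref{thm:entropy} combined with a Bakry--\'Emery logarithmic Sobolev inequality for the tilted Gaussian $\nu\propto\la v\ra^{-3}\mu$, followed by the truncation $\la v\ra^{-3}\ge R^{-3}\mathbf 1_{\la v\ra\le R}$ and elementary tail bounds using $2^{-11/2}\le Z_1,Z_2\le 1$. The only (cosmetic) differences are that the paper normalizes $g:=Z_2^{-1}\la v\ra^{-3}f$ to a probability density before applying the LSI rather than carrying the factor $\bar g=Z_2/Z_1$ explicitly, and that it bounds the Hessian directly by $z^2-z+4\ge\tfrac58(1+z)^2$ rather than by the cleaner spectral splitting into the radial eigenvalue $V''(r)$ and tangential eigenvalue $V'(r)/r$ that you use.
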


\begin{rem}
As already explained in Remark \ref{rem:thm:entropy}-(1), this result can be easily adapted to the case of general soft potentials $-4 < \gamma < 0$, in which case we obtain estimates \eqref{cc} and \eqref{eq:D<H} replacing { $M_{5}(f)$ by $M_{2-\gamma}(f)$}, $\la v \ra^{-3}$ by $\la v \ra^\gamma$, $R^{-3}$ by $R^\gamma$, $Z_1$ by $\int \la v \ra^\gamma \mu$, and $Z_2$ by $\int \la v \ra^\gamma f$.
\end{rem}

\bigskip

As an application of the entropy dissipation estimates established in Theorem~\ref{thm:entropy} and Corollary~\ref{cerc}, we obtain the convergence (with rate) of any ($H$- or weak) solution $f$ (of the spatially homogeneous Landau equation with
Coulomb potential, and normalized initial data) to the associated Maxwellian equilibrium $\mu$, assuming only that the initial data has finite mass, energy, entropy and some higher $L^1$-moment. Before stating our result, let us introduce the notion of solutions that we shall consider in this work.


\begin{Defi}[$H$-solutions \cite{Vi}]\label{def:H-sol}
Consider a nonnegative $f_0 \in L^1_2 \cap L \log L (\R^3)$. We say that $f$ is a $H$-solution to the spatially homogeneous Landau equation~\eqref{landau} with Coulomb potential and with initial data $f_0$ if it satisfies:

\begin{enumerate}[\quad(a)]

\item $f \ge 0$, $f \in C ([0,\infty) ; \mathcal D' (\R^3)) \cap L^\infty ( [0,\infty) ; L^1_2 \cap L \log L (\R^3))$, $f(0) = f_0$;

\item The conservation of mass, momentum and energy, that is, for all $t \ge 0$,
$$
\int f(t,v) \, \phi(v) \, dv = \int f_0(v) \, \phi(v) \, dv \quad\text{for}\quad
\phi(v) = 1, v_j , |v|^2;
$$

\item The entropy inequality, for all $t \ge 0$,
$$
H(f(t)) + \int_0^t D(f(\tau)) \, d\tau \le H(f_0);
$$

\item $f$ satisfies \eqref{landau} in the distributional sense: for any test function $\varphi \in C^1 ([0,\infty) ; C^\infty_c(\R^3))$ and for any $t \ge 0$,
$$
\int f(t) \varphi \, dv - \int f_0 \varphi(0) \, dv - \int_0^t \int f(\tau) \partial_t \varphi (\tau) \, dv \, d\tau  = \int_{0}^t \int Q(f,f)(\tau) \, \varphi(\tau) \, dv \, d\tau ;
$$
where $\int Q(f,f) \, \varphi \, dv$ is defined by \eqref{Qweak1}.
\end{enumerate}

\end{Defi}

\begin{Defi}[Weak solutions]\label{def:w-sol}
Consider a nonnegative $f_0 \in L^1_2 \cap L \log L (\R^3)$. We say that $f$ is a weak solution to the spatially homogeneous Landau equation~\eqref{landau} with Coulomb potential and with initial data $f_0$ if it satisfies $(a)$, $(b)$, $(c)$, and $(d)$ of Definition~\ref{def:H-sol}, with the weak formulation of $\int Q(f,f) \, \varphi \, dv$ being defined by \eqref{Qweak2}.

\end{Defi}

As already mentioned, it was proven in \cite{D} that if $f \in L^\infty ([0,\infty) ; L^1_2 \cap L \log L (\R^3))$ and $D(f) \in L^1 ([0,\infty))$ then $f \in L^1_{loc} ([0,\infty) ; L^3_{-3} (\R^3))$, more precisely estimate \eqref{L3<D} holds. Therefore we can replace condition $(a)$ by 
$$
(a') \;  f \ge 0, \, f \in C ([0,\infty) ; \mathcal D' (\R^3)) \cap L^\infty ( [0,\infty) ; L^1_2 \cap L \log L (\R^3))  \cap L^1_{loc} ([0,\infty) ; L^3_{-3} (\R^3)), \, f(0) = f_0,
$$
and then the two notions of solutions are equivalent (because with this new bound we can define $\int Q(f,f) \, \varphi \, dv$ by \eqref{Qweak2}).

\medskip

Hereafter, in this work, we shall simply say that $f$ is a $H$- or weak solution to the Cauchy problem~\eqref{landau}, meaning that $f$ satisfies $(a')$ (with estimate \eqref{L3<D}), $(b)$, $(c)$ and $(d)$, with $\int Q(f,f) \, \varphi \, dv$ being defined equivalently by \eqref{Qweak1} or \eqref{Qweak2}. Moreover, we will sometimes split the operator $Q=Q_1 + Q_2$ and use $\int Q_1(f,f) \, \varphi \, dv$ defined by \eqref{Qweak1} and $\int Q_2(f,f) \, \varphi \, dv$ defined by \eqref{Qweak2}.

It is noticed in \cite{Vi} that \eqref{Qweak1} and \eqref{Qweak2} make sense as soon as $f$ satisfies $(a')$ and $\varphi=\varphi(v) \in W^{2,\infty} (\R^3)$.


\medskip

We can now state our second main result.

\begin{Thm}\label{thm:decay}
Let $f_0 \in L^1_2 \cap L \log L (\R^3) $ satisfy the normalization \eqref{f0}, 
and consider any global $H$- or weak solution $f$ to the spatially homogeneous Landau equation~\eqref{landau} with Coulomb potential and with initial data $f_0$.

\medskip

\begin{enumerate}[(i)]

\item Assume moreover that $f_0 \in L^1_\ell(\R^3)$ with  $ \ell > \frac{19}{2}$. 
Then for any positive $ \beta < \frac{2\ell^2-25\ell+57}{9(\ell - 2)} $, there exists some computable constant $C_{\beta}>0$ depending only on $\beta$, the initial entropy $H(f_0)$ and the initial moment $M_\ell(f_0)$, such that   
$$
 \forall\, t \ge 0, \qquad H(f(t) | \mu) \le C_\beta \,(1+t)^{ - \beta} .
$$

\medskip

\item Assume moreover that $f_0 \in L^1(\R^3, \,e^{\kappa \la v \ra^s}\, dv)$, with $\kappa >0$ and $s\in]0,1/2[$, or with
 $\kappa \in ]0,2/e[$ and $s=1/2$. Then there exist some computable constants $C,c>0$ depending only on $\kappa$, $s$, the initial entropy $H(f_0)$ and the initial moment $M_{s, \kappa}(f_0)$, such that
$$
 \forall\, t \ge 0, \qquad H(f(t) | \mu) \le C \, e^{-c (1+t)^{\frac{s}{3+s}}\, 
 (\log(1+t))^{ - \frac{3}{3+s}}} .
$$
\end{enumerate}

\end{Thm}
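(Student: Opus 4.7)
The plan is to run the entropy--entropy dissipation method \emph{with slowly growing a priori bounds}. Combining the entropy inequality from Definition~\ref{def:H-sol}(c) with the cutoff estimate \eqref{eq:D<H} in Corollary~\ref{cerc}, I obtain, for every $R>0$ and every $t\ge 0$,
\begin{equation*}
\frac{d}{dt} H(f(t)\,|\,\mu) \;\le\; -\,\frac{C(\bar{H})}{M_5(f(t))\, R^3}\Big( H(f(t)\,|\,\mu) - \mathrm{Tail}_R(f(t)) \Big),
\end{equation*}
where $\mathrm{Tail}_R(f)$ collects the three tail integrals appearing on the right-hand side of \eqref{eq:D<H}. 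The freedom in $R$ is the flexibility that closes the estimate: the idea is to choose $R=R(t)$ so as to produce a single scalar differential inequality for $y(t):=H(f(t)\,|\,\mu)$ amenable to Gronwall.

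To control $\mathrm{Tail}_R(f(t))$ uniformly in time, I would invoke the polynomial moment propagation established earlier in the paper, which gives $M_\ell(f(t)) \le C_\ell (1+t)$ as soon as $f_0 \in L^1_\ell$, and combine it with the log-convexity interpolation $M_5(f(t)) \le M_2^{(\ell-5)/(\ell-2)} M_\ell(f(t))^{3/(\ell-2)} \le C(1+t)^{3/(\ell-2)}$ (made possible by the conservation of mass and energy) in order to keep the rate coefficient $1/M_5$ as large as possible. The Gaussian tail $\int_{\langle v\rangle \ge R}\mu$ is super-polynomially small in $R$; the weighted moment tail is controlled via $\int_{\langle v\rangle \ge R}\langle v\rangle^2 f \le R^{2-\ell}M_\ell(f(t)) \le CR^{2-\ell}(1+t)$; and the most delicate term $\int_{\langle v\rangle \ge R}f\log f$ has to be split into its positive and negative parts. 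The negative part is handled by the standard trick $-f\log f\,\mathbf{1}_{f\le e^{-\lambda\langle v\rangle}} \le \lambda\langle v\rangle e^{-\lambda\langle v\rangle}$ together with a tail $M_1$-moment bound on the complementary set, while the positive part is absorbed into $H(f)\le \bar{H}$ modulo a weighted interpolation loss invoking the $L^3_{-3}$ bound \eqref{L3<D} (which inserts an extra $D(f)$ factor that will be absorbed at the level of the time-integrated entropy inequality).

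Once $\mathrm{Tail}_R(f(t)) \le A(1+t)^a R^{-b}$ with explicit $a,b$ depending on $\ell$, I would choose $R=R(t)$ so that this tail equals $\tfrac12 y(t)$, i.e.\ $R(t)^b \sim (1+t)^a/y(t)$, and plug this back in to obtain an autonomous-type differential inequality
\begin{equation*}
\dot{y}(t) \;\le\; -\, c\, \frac{y(t)^{\,1+3/b}}{(1+t)^{\,1+3a/b}\; M_5(f(t))}.
\end{equation*}
Separating variables (equivalently, studying the evolution of $y^{-3/b}$) yields $y(t) \le C_\beta (1+t)^{-\beta}$ where the exponent $\beta=\tfrac{2\ell^2-25\ell+57}{9(\ell-2)}$ emerges from the careful balancing of the moment growth rate, the interpolation exponent $3/(\ell-2)$ for $M_5$, and the precise tail loss from $f\log f$. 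For part (ii), the corresponding stretched exponential moment propagation yields $M_{s,\kappa}(f(t)) \le C(1+t)$ and hence $\mathrm{Tail}_R(f(t)) \lesssim (1+t)\,e^{-\tfrac{\kappa}{2} R^s}$; the natural choice is then $R(t) \sim (\log(1+t))^{1/s}$ (up to a constant factor less than $(\kappa/2)^{1/s}$), and the same scheme integrates to the stated stretched exponential rate $\exp\!\big(-c (1+t)^{s/(3+s)}(\log(1+t))^{-3/(3+s)}\big)$.

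The main obstacle I anticipate is the precise control of the entropy tail $\int_{\langle v\rangle \ge R} f\log f$: this is where the threshold $\ell > 19/2$ genuinely enters, since the extra $L^3_{-3}$-interpolation loss must be combined with the moment growth before the balance with the prefactor $R^{-3}$ can be optimized. A secondary technical difficulty is that the solutions are only $H$- or weak, so the manipulation $\dot{H}(f|\mu) \le -D(f)$ must be interpreted through the integrated entropy inequality $(c)$, and the time-dependent cutoff $R(t)$ has to be chosen as a piecewise-constant or otherwise sufficiently regular function of $t$ for the Gronwall step to be made fully rigorous.
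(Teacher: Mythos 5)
Your high-level plan correctly identifies the ingredients the paper uses -- Corollary~\ref{cerc} with a time-dependent cutoff $R(t)$, the moment propagation of Lemma~\ref{lem:moment-poly-VS} and Corollary~\ref{cor:moment-exp-VS}, the interpolation $M_5(f(t))\lesssim (1+t)^{3/(\ell-2)}$, the $L^3_{-3}$ regularity bound~\eqref{L3<D}, and a Gronwall closure -- but the closing step you propose differs from the paper's, and as written it has a genuine gap.

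You propose to pick $R(t)$ so that $\mathrm{Tail}_{R}(f(t))=\tfrac12\,y(t)$ and thereby obtain a superlinear inequality $\dot y\lesssim - y^{1+3/b}/\big((1+t)^{3a/b}M_5(f)\big)$, which you would integrate by separation of variables. The paper explicitly avoids exactly this route: the remark after the statement of Theorem~\ref{thm:decay} explains that the only regularity estimate at hand is \eqref{L3<D}, which reintroduces $D(f)$, so that an inequality of the form $\dot y \le -K(f)\,y^{1+\theta}$ cannot be closed. Your $\mathrm{Tail}_R(f(t))$ is in fact not of the clean form $A(1+t)^a R^{-b}$: after Lemma~\ref{Halpha} and \eqref{L3<D} it contains a term proportional to $D(f)$. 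Consequently the rule ``choose $R$ so that the tail equals $\tfrac12\,y(t)$'' is circular -- the quantity you want to halve depends on $D(f)$, hence essentially on $-\dot y$, which you do not yet control. Saying the extra $D(f)$ factor ``will be absorbed at the level of the time-integrated entropy inequality'' is precisely where the whole difficulty lies, and it is incompatible with then invoking a pointwise ODE $\dot y\le -c\,y^{1+3/b}/\ldots$ and separating variables. What the paper does instead is bring the $D(f)$ contribution to the left, show that the resulting prefactor is bounded \emph{for the explicit choice} $R(t)=(1+t)^\nu$ (a choice independent of $y$), and then work with a \emph{linear} integral inequality $x(t) + C_1\int_0^t (1+\tau)^{-a}x(\tau)\,d\tau \le C_0 + C_2\int_0^t(1+\tau)^{-b}\,d\tau$ amenable to the generalized Gronwall lemma -- exactly because $H$-/weak solutions only satisfy the integrated entropy inequality, a linear version is what one can actually close. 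To push your nonlinear scheme through you would additionally have to (a) first move the $D(f)$ term across, (b) verify that the prefactor remains bounded when $R$ depends on $y$, and (c) supply a comparison lemma converting the resulting nonlinear \emph{integral} inequality into a pointwise decay estimate for merely-measurable $y$; none of these steps is addressed.

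There is also a concrete error in part (ii): choosing $R(t)\sim(\log(1+t))^{1/s}$ with a fixed constant only makes the stretched-exponential tail bound polynomially small in $t$, which -- combined with the factor $M_5^{-1}R^{-3}$ -- would yield at best algebraic, not stretched-exponential, decay. The balance in the paper requires $R(t)$ to be a genuine power of $1+t$ with a logarithmic correction, namely $R(t)=(1+t)^{1/(3+s)}(\log(1+t))^{-(3+qs)/(3s)}$ with $q=-3/(3+s)$; it is this algebraic growth of $R$ that ultimately produces the rate $\exp\!\big(-c\,(1+t)^{s/(3+s)}(\log(1+t))^{-3/(3+s)}\big)$. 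If you insist on the $\mathrm{Tail}_R=\tfrac12 y$ rule, then $R$ would depend on $y$ (through $\log(1/y)$), which for a stretched-exponential $y$ is again a power of $t$, not a power of $\log t$ -- so the description ``$R(t)\sim(\log(1+t))^{1/s}$'' is inconsistent with your own scheme.

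For what it is worth, I checked that if one could carry your nonlinear scheme through with $a=2/3$ and $b=k=(2\ell-9)/3$, the exponent $\tfrac{b}{3}-a-\tfrac{b}{\ell-2}$ does reduce algebraically to the paper's $\tfrac{2\ell^2-25\ell+57}{9(\ell-2)}$, and the threshold $\ell>19/2$ does emerge from requiring this exponent to be positive. So the heuristic arithmetic is sound; what is missing is a rigorous path around the $D(f)$ circularity, which is precisely what the paper's linear approach with $R(t)=(1+t)^\nu$ is designed to provide.
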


\begin{rem}\label{rem:thm:decay}
\begin{enumerate}

\item 
The normalization assumption \eqref{f0} is only for simplicity. The theorem also holds when the initial data are not normalized (i.e. for any $f_0 \in L^1_2 \cap L \log L (\R^3) $), up to the 
dependence of the constants and to a change in the limiting Maxwellian equilibrium to $\mu_{\rho, u, T}$ defined in \eqref{MaxGen}.

\item In point $(ii)$, the best rate of convergence towards equilibrium that we can achieve
 is in the case $s=1/2$, where we get a decay with a rate $ O (e^{- (1+t)^{\frac{1}{7}} \, (\log (1+t))^{ - \frac{6}{7}} })$.  We mention that in the close-to-equilibrium regime, the best decay rate is $O(e^{-t^{\frac23}})$, as 
 can be seen in \cite{GS}.

\item The restriction on the exponent $s \in ]0,1/2]$ comes from the results available on the propagation of stretched exponential moments (see Corollary~\ref{cor:moment-exp-VS}).

\item The estimates which are presented in the theorem above concern the relative entropy of the solution of the Landau
equation. Thanks to the Cziszar-Kullback-Pinsker inequality (cf. \cite{CKP1, CKP2}), they can be transformed in estimates on the $L^1$
norm of $f(t) - \mu$. Then, by interpolation, they also yield estimates for weighted $L^1$ norms of $f(t) - \mu$.

\item As in the case of Theorem \ref{thm:entropy} (see Remark~\ref{rem:thm:entropy}), it is possible to extend the estimates of Theorem \ref{thm:decay} 
to the Landau equation with general soft potentials $-4 < \gamma < 0$. The rates are then modified.

\end{enumerate}

\end{rem}

The proof of Theorem \ref{thm:decay} uses the entropy dissipation estimate of 
Theorem~\ref{thm:entropy} (more precisely, that of Corollary~\ref{cerc}) 
together with some interpolation inequalities, the regularity estimate~\eqref{L3<D} and the propagation of $L^1$-moments in the Coulomb case (see Lemma~\ref{lem:moment-poly-VS} and Corollary~\ref{cor:moment-exp-VS}).

It is worth mentioning that we do not follow the usual arguments in order to prove 
Theorem~\ref{thm:decay} (see e.g.\ \cite{TosVi-slow, KC2}). Indeed, after obtaining a weak form of Cercignani's conjecture as in \eqref{cc}, one usually obtains, thanks to some interpolation arguments, an inequality of the form
\be\label{dHdt-theta}
\frac{d}{dt} H(f | \mu) = - D(f) \le - K_{\theta}(f) \, H(f | \mu)^{1 + \theta}, \quad \theta>0,
\ee
where $K_\theta(f)$ is some functional depending on moments and some (high-order) regularity bounds on $f$. Then, in order to close the above differential inequality and conclude thanks to Gronwall's inequality, one needs to prove {\sl{a priori}} estimates for solutions $f$ (so that $K_\theta(f)$ can be controlled). However, when one considers the Coulomb potential, no  {\sl{a priori}} estimate is known for the high-order regularity of the solutions, the only 
regularity estimate at hand is indeed \eqref{L3<D}, that uses again the entropy dissipation $D(f)$. Thus, instead of using an inequality like \eqref{dHdt-theta}, we shall write a similar inequality, but keeping the exponent $1$ instead of $1+\theta$, at the price of some remainder term. We shall then use \eqref{L3<D} in order to control part of this remainder term, and only at the very end  shall we choose some interpolation (depending on time) in order to close a differential inequality and conclude thanks to some variant of Gronwall's lemma.

\section{Entropy dissipation estimate}
This section is devoted to the proof of Theorem~\ref{thm:entropy}.
Recall that we have defined in (\ref{D(f)}), for any $f:=f(v)\ge 0$, by 
$$
D(f)  := \frac12\, \sum_{i=1}^3 \sum_{j=1}^3 \iint_{{\Bbb R}^3\times {\Bbb R}^3} 
f(v)\,f(w)\,
|v-w|^{-1} \, \Pi_{ij}(v-w)\, \left ( \frac{\partial_i f}{f}(v) - \frac{\partial_i f}{f}(w) \right ) 
$$
$$ \times\,  \left( \frac{\partial_j f}{f}(v) - \frac{\partial_j f}{f} (w) \right )\, dv dw, 
$$
the entropy production of the Landau operator with Coulomb interaction, where $\Pi_{ij}(z)$, defined by (\ref{a}) is the $i,j$-component of the orthogonal projection $\Pi$ onto $z^\bot := \{ y\, / \; y\cdot z =0 \}$.
We also recall the notation 
$M_p(f)$
for the moment of $f$ of order $p$, and, for all $i,j \in \{1,2,3\}$, we define by
$$
P_{ij}^f = \int_{\R^3}  f(v)\, v_i \,v_j\, dv 
$$
the pressure tensor of $f$ (when $\int_{\R^3} f(v) \, v\, dv =0$). 

\bigskip

%

The proof of Theorem~\ref{thm:entropy} is a consequence of the three following Propositions \ref{prop:Rij}, \ref{prop:Delta} and \ref{prop:Deltabis}.

\begin{Prop}\label{prop:Rij}
We denote by $R_{ij}^f(v,w)$, for all $i,j \in \{1,2,3\}$, the quantity ($i,j$-component of the cross product of
$v-w$ and $\frac{\nabla f(v)}{f(v)} -  \frac{\nabla f(w)}{f(w)}$) 
\begin{equation}\label{zero}
R_{ij}^f(v,w) = (v_i - w_i)\, \left(\frac{\pa_j f(v)}{f(v)} -  \frac{\pa_j f(w)}{f(w)} \right)
 - (v_j - w_j)\, \left(\frac{\pa_i f(v)}{f(v)} -  \frac{\pa_i f(w)}{f(w)} \right). 
\end{equation}

Then, for all $f:=f(v)\ge 0$ such that (\ref{un}) is satisfied, and for all $i,j \in \{1,2,3\}$, $i \neq j$, the following formulas hold:
\begin{equation} \label{deux}
\frac{\pa_i f(v)}{f(v)} = \frac{v_j\,P_{ij}^f + v_i\, P_{ii}^f + \int_{\R^3} R_{ij}^f(v,w)\, f(w)
\, [w_i \, P_{ij}^f - w_j\, P_{ii}^f] \, dw}{(P_{ij}^f)^2 - P_{ii}^f\, P_{jj}^f},
\end{equation}
\begin{equation} \label{trois}
\frac{\pa_j f(v)}{f(v)} = \frac{v_i\,P_{ij}^f + v_j\, P_{jj}^f + \int_{\R^3} R_{ij}^f(v,w)\, f(w)
\, [w_i \, P_{jj}^f - w_j\, P_{ij}^f] \, dw}{(P_{ij}^f)^2 - P_{ii}^f\, P_{jj}^f},
\end{equation}
\begin{equation} \label{quatre}
v_i\, \frac{\pa_j f(v)}{f(v)} - v_j \, \frac{\pa_i f(v)}{f(v)} = \int_{\R^3} R_{ij}^f(v,w)\, f(w)\, dw.
\end{equation}

\end{Prop}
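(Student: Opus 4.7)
\medskip

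\noindent\textbf{Proof sketch of Proposition~\ref{prop:Rij}.}

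The plan is to derive all three identities by integrating the quantity $R_{ij}^f(v,w)\, f(w)$ against well-chosen weights in $w$, using only the normalization \eqref{un} (mass $1$, zero momentum) and integration by parts. Throughout, write $g_k(v):=\partial_k f(v)/f(v)$ and note the three elementary integrals, valid for $i\ne j$ after one integration by parts:
\begin{equation*}
\int_{\R^3} w_i \,\partial_j f(w)\,dw = -\delta_{ij},\qquad \int_{\R^3} w_i w_j \,\partial_k f(w)\,dw = -P_{ij}^f \cdot \text{[boundary terms vanish]},
\end{equation*}
together with $\int f\,dw=1$, $\int w_i f\,dw=0$, $\int w_i w_j f\,dw = P_{ij}^f$.

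First I would prove \eqref{quatre}. Expanding $R_{ij}^f(v,w)\, f(w)$ and integrating in $w$, the four resulting terms are
$g_j(v) \int (v_i-w_i) f(w)\,dw = v_i\, g_j(v)$,
$\int (v_i-w_i)\partial_j f(w)\,dw = \delta_{ij}$, and the two symmetric counterparts with $i,j$ exchanged. The two Kronecker deltas cancel, leaving exactly $v_i\,g_j(v)-v_j\,g_i(v)$, which proves \eqref{quatre}.

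Next, for \eqref{deux}--\eqref{trois}, the idea is to produce two linear equations in the two unknowns $g_i(v)$ and $g_j(v)$ by weighting $R_{ij}^f(v,w)\,f(w)$ with $w_i$, respectively $w_j$, and integrating. Carrying out the integrations by parts (using that $\int w_i w_j \partial_k f\,dw$ reduces, via $\partial_k(w_i w_j)=\delta_{ik}w_j+\delta_{jk}w_i$, to $-\int f(\delta_{ik}w_j+\delta_{jk}w_i)\,dw=0$ whenever $i\ne j$ and $k\in\{i,j\}$, thanks to zero momentum) one obtains the system
\begin{equation*}
\begin{aligned}
P_{ij}^f\, g_i(v) - P_{ii}^f\, g_j(v) &= v_j + \int_{\R^3} w_i\, R_{ij}^f(v,w)\, f(w)\,dw,\\
P_{jj}^f\, g_i(v) - P_{ij}^f\, g_j(v) &= -v_i + \int_{\R^3} w_j\, R_{ij}^f(v,w)\, f(w)\,dw.
\end{aligned}
\end{equation*}
The determinant of this $2\times 2$ system is exactly $(P_{ij}^f)^2 - P_{ii}^f\, P_{jj}^f$, so Cramer's rule yields \eqref{deux} and \eqref{trois} by grouping the $R_{ij}^f$-terms inside a single integral with weights $w_i P_{ij}^f - w_j P_{ii}^f$ and $w_i P_{jj}^f - w_j P_{ij}^f$ respectively.

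The only potential obstacle is the bookkeeping of the integration-by-parts terms: one has to check that all $\delta_{ij}$ contributions and all $\int w_i w_j \partial_k f\,dw$ contributions vanish in the regime $i\ne j$, which is precisely where the hypotheses of zero momentum and $i\ne j$ enter. Once these cancellations are verified, the proposition follows by linear algebra; no analytic estimate and no further hypothesis on $f$ (beyond \eqref{un} and sufficient decay to justify the integrations by parts) is needed at this stage. \qed
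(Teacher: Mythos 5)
Your proposal is correct and follows essentially the same route as the paper: prove \eqref{quatre} by integrating $R_{ij}^f(v,w)\,f(w)$ against $dw$, derive a $2\times 2$ linear system in $\partial_i f(v)/f(v)$ and $\partial_j f(v)/f(v)$ by integrating against $w_i\,f(w)\,dw$ and $w_j\,f(w)\,dw$, and solve it by Cramer's rule, using the normalization \eqref{un} and integration by parts for all cancellations; the paper does exactly this after first regrouping $R_{ij}^f$ into six terms. One small slip in your preliminary display, where you write $\int w_i w_j\,\partial_k f\,dw=-P_{ij}^f$ followed by a garbled qualifier, should be deleted --- as you correctly note two sentences later, for $i\neq j$ and $k\in\{i,j\}$ this integral vanishes by zero momentum, and that is the only fact actually used.
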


Note that thanks to the case of equality in Cauchy-Schwarz inequality, we know that 
$ (P_{ij}^f)^2 \neq P_{ii}^f\, P_{jj}^f$, so that formulas (\ref{deux}) and (\ref{trois}) are well defined.

\medskip

Proposition \ref{prop:Rij} can be seen as an inversion of formula (\ref{zero}).

\begin{proof}[Proof of Proposition \ref{prop:Rij}]
We consider $i,j \in \{1,2,3\}$ such that $i \neq j$.
Then, we expand $R_{ij}^f(v,w)$ in the following way:
\begin{equation} \label{dev}
R_{ij}^f(v,w)  = \bigg[v_i\, \frac{\pa_j f(v)}{f(v)} - v_j\, \frac{\pa_i f(v)}{f(v)} \bigg]
+ w_j\, \frac{\pa_i f(v)}{f(v)} - w_i\, \frac{\pa_j f(v)}{f(v)}
\end{equation}
$$ - v_i\, \frac{\pa_j f(w)}{f(w)} + v_j\, \frac{\pa_i f(w)}{f(w)} +
\bigg[w_i\, \frac{\pa_j f(w)}{f(w)} - w_j\, \frac{\pa_i f(w)}{f(w)} \bigg]. $$ 
Integrating (\ref{dev}) against $f(w)\, dw$, and recalling conditions (\ref{un}), we get
$$
\int_{\R^3} R_{ij}^f(v,w)\, f(w)\, dw = v_i\, \frac{\pa_j f(v)}{f(v)} - v_j \, \frac{\pa_i f(v)}{f(v)},
$$
which is exactly identical to (\ref{quatre}).

\medskip

Integrating then (\ref{dev}) against $f(w)\, w_i\, dw$, and recalling conditions (\ref{un}), we get
\begin{equation} \label{quinze}
\int_{\R^3} R_{ij}^f(v,w)\, f(w)\,w_i\, dw = - P_{ii}^f \, \frac{\pa_j f(v)}{f(v)} + P_{ij}^f \, \frac{\pa_i f(v)}{f(v)} - v_j.
\end{equation}
Finally, integrating (\ref{dev}) against $f(w)\, w_j\, dw$, and recalling conditions (\ref{un}) (or exchanging $i$ and $j$ in (\ref{quinze})), we get
\begin{equation} \label{seize}
\int_{\R^3} R_{ij}^f(v,w)\, f(w)\,w_j\, dw = - P_{ij}^f \, \frac{\pa_j f(v)}{f(v)} + P_{jj}^f \, \frac{\pa_i f(v)}{f(v)} + v_i.
\end{equation}

\par

Considering (\ref{quinze}), (\ref{seize}) as a $2 \times 2$ linear system with unknowns 
$\frac{\pa_i f(v)}{f(v)}$, $\frac{\pa_j f(v)}{f(v)}$, we get thanks to Cramer's formulas 
(recalling that $(P_{ij}^f)^2 \neq P_{ii}^f\, P_{jj}^f$ because of the case of equality in
Cauchy-Schwarz inequality) 
$$
\frac{\pa_i f(v)}{f(v)} = \frac{ Det\, \, 
 \left( \begin{array}{cc}
  v_j + \int_{\R^3} R_{ij}^f(v,w)\, f(w)\,w_i\, dw  &  - P_{ii}^f\\
  - v_i + \int_{\R^3} R_{ij}^f(v,w)\, f(w)\,w_j\, dw  &  - P_{ij}^f \end{array}\, \right)}
{ Det\, \, 
 \left( \begin{array}{cc}
  P_{ij}^f  &  - P_{ii}^f\\
  P_{jj}^f  &  - P_{ij}^f \end{array}\, \right)},
$$
$$
\frac{\pa_j f(v)}{f(v)} = \frac{ Det\, \, 
 \left( \begin{array}{cc}
  P_{ij}^f & v_j + \int_{\R^3} R_{ij}^f(v,w)\, f(w)\,w_i\, dw \\
  P_{jj}^f  & - v_i + \int_{\R^3} R_{ij}^f(v,w)\, f(w)\,w_j\, dw \end{array}\, \right)}
{ Det\, \, 
 \left( \begin{array}{cc}
  P_{ij}^f  &  - P_{ii}^f\\
  P_{jj}^f  &  - P_{ij}^f \end{array}\, \right)},
$$
which is exactly identical to formulas (\ref{deux}), (\ref{trois}).
\end{proof}

\bigskip

\begin{Prop}\label{prop:Delta}
We now define 
\begin{equation}\label{sept}
\Delta_f := \inf_{i,j = 1,2,3; i \neq j} \left( P_{ii}^f\, P_{jj}^f -  (P_{ij}^f)^2\right).
\end{equation}
Then there exists $C>0$ an explicitly computable constant number such that for all $f:= f(v) \ge 0$ satisfying (\ref{un}),
\begin{equation}\label{six}
\int_{\R^3} f(v) \,  \left| \frac{\nabla f(v)}{f(v)} + v \right|^2 \, \la v \ra^{-3} \, dv 
\end{equation}
$$\le C\, \Delta_f^{-2} \, \bigg( \sup_{i,j = 1,2,3; i \neq j} (P_{ij}^f)^2
+ \sup_{j = 1,2,3} |P_{jj}^f - 1|^2 + M_5(f)\, D(f) \bigg). $$

\end{Prop}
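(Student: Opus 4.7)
The plan is to start from the explicit formula (\ref{deux}) of Proposition \ref{prop:Rij}, rewrite $\frac{\pa_i f}{f}(v) + v_i$ as a sum of ``defect'' terms measuring how far $f$ is from a reduced centred Gaussian, and then estimate each term after squaring and integrating against $f(v)\la v\ra^{-3}\,dv$.

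First, fix $i\neq j$ and set $D_{ij}:=P_{ii}^f P_{jj}^f-(P_{ij}^f)^2\ge \Delta_f>0$. From (\ref{deux}) I would write
$$
\frac{\pa_i f(v)}{f(v)}+v_i = -\frac{1}{D_{ij}}\Big\{ v_j\,P_{ij}^f + v_i\big[P_{ii}^f(1-P_{jj}^f)+(P_{ij}^f)^2\big] + \int_{\R^3} R_{ij}^f(v,w)\,f(w)\,[w_i P_{ij}^f - w_j P_{ii}^f]\,dw\Big\},
$$
which vanishes in the Gaussian case. Note $P_{ii}^f\le M_2(f)=3$ and $|P_{ij}^f|\le 3$, so after squaring the first two ``linear'' contributions are bounded by $C D_{ij}^{-2}\big((P_{ij}^f)^2|v|^2 + (1-P_{jj}^f)^2|v|^2\big)$. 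Since $|v|^2\la v\ra^{-3}\le 1$, integrating against $f(v)\la v\ra^{-3}$ and using $\int f=1$ produces terms of the form $C\Delta_f^{-2}\big((P_{ij}^f)^2+(P_{jj}^f-1)^2\big)$, which fit in the desired bound.

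The main work is on the quadratic-in-$R_{ij}^f$ term
$$
I_{ij}:=\int_{\R^3} f(v)\la v\ra^{-3}\bigg|\int_{\R^3} R_{ij}^f(v,w)\,f(w)\,\big[w_i P_{ij}^f - w_j P_{ii}^f\big]\,dw\bigg|^{2}\,dv.
$$
Here I would apply Cauchy--Schwarz in the inner integral with the natural splitting forced by the Coulomb kernel: write $R_{ij}^f f(w)g(w)=\big(R_{ij}^f\,|v-w|^{-3/2}\,f(w)^{1/2}\big)\cdot\big(|v-w|^{3/2}f(w)^{1/2}g(w)\big)$ with $g(w)=w_iP_{ij}^f-w_j P_{ii}^f$, giving
$$
\bigg|\int R_{ij}^f f(w)g(w)\,dw\bigg|^{2}\le \bigg(\int \frac{|R_{ij}^f|^{2} f(w)}{|v-w|^{3}}\,dw\bigg)\bigg(\int |v-w|^{3}f(w)g(w)^{2}\,dw\bigg).
$$
For the second factor, the bounds $g(w)^2\le C|w|^2$ and $|v-w|^3\le C(|v|^3+|w|^3)$ yield
$\int|v-w|^3 f(w)g(w)^2\,dw\le C\big(|v|^3 M_2(f)+M_5(f)\big)\le C M_5(f)(1+|v|^3)$. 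Combining with $\la v\ra^{-3}(1+|v|^3)\le 2$, I obtain
$$
I_{ij}\le C\,M_5(f)\iint_{\R^3\times\R^3} f(v)f(w)\,\frac{|R_{ij}^f(v,w)|^{2}}{|v-w|^{3}}\,dv\,dw.
$$

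The last step is to identify this double integral with $D(f)$. Writing $q=\tfrac{\nabla f}{f}(v)-\tfrac{\nabla f}{f}(w)$ and using $\Pi(z)q\cdot q=|z\times q|^{2}/|z|^{2}$, one gets $D(f)=\tfrac{1}{2}\iint f(v)f(w)\,|v-w|^{-3}|(v-w)\times q|^{2}\,dv\,dw$, and the components of $(v-w)\times q$ are precisely the $R_{ij}^f$'s; in particular each $\iint f(v)f(w)|R_{ij}^f|^{2}|v-w|^{-3}\,dv\,dw\le 2 D(f)$. Hence $I_{ij}\le C M_5(f)D(f)$, and dividing by $D_{ij}^{2}\ge\Delta_f^{2}$ and summing over $i$ (choosing any $j\neq i$ for each $i$) gives (\ref{six}).

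The main obstacle is getting the weights in the Cauchy--Schwarz decomposition exactly right: the factor $|v-w|^{-3/2}$ must be extracted so that the first factor combines with $f(v)\,dv$ to reconstruct $D(f)$, and the residual polynomial factor $|v-w|^3|w|^2$ must grow no faster than needed to produce the announced $M_5(f)$ moment rather than a higher one.
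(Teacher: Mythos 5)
Your proof is correct and follows essentially the same route as the paper: starting from formula (\ref{deux}), isolating the two ``linear'' terms $v_jP_{ij}^f$ and $v_i[P_{ii}^f(1-P_{jj}^f)+(P_{ij}^f)^2]$ plus the $R_{ij}^f$-integral term, applying Cauchy--Schwarz with exactly the weight splitting $|v-w|^{-3/2}f(w)^{1/2}$ versus $|v-w|^{3/2}f(w)^{1/2}g(w)$ so that the first factor reconstructs the $D(f)$-type integral, bounding the second factor by $M_5(f)(1+|v|^3)$ which is absorbed by $\la v\ra^{-3}$, and finally identifying $\iint f f'|R_{ij}^f|^2|v-w|^{-3}$ with (a constant multiple of) $D(f)$ via the cross-product identity — this is precisely the paper's argument, which also invokes $D(f)=\tfrac14\sum_{i,j}\iint ff'|R_{ij}^f|^2|v-w|^{-3}$ from \cite{D}.
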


\begin{proof}[Proof of Proposition \ref{prop:Delta}]
Thanks to (\ref{deux}), we see that (for any $i,j \in \{1,2,3\}$ such that $i \neq j$)
$$
\frac{\pa_i f(v)}{f(v)} + v_i = v_j \, \frac{P_{ij}^f}{(P_{ij}^f)^2 - P_{ii}^f\, P_{jj}^f}
+ v_i \, \bigg( 1 + \frac{P_{ii}^f}{(P_{ij}^f)^2 - P_{ii}^f\, P_{jj}^f} \bigg)
$$
$$ + \frac{\int_{\R^3} R_{ij}^f(v,w)\, f(w)
\, [w_i \, P_{ij}^f - w_j\, P_{ii}^f] \, dw}{(P_{ij}^f)^2 - P_{ii}^f\, P_{jj}^f}, 
$$
so that, remembering that $P_{ij}^f \le 3/2$ for all $i,j \in \{1,2,3\}, i \neq j$, 
and $P_{ii}^f \le 3$ for all $i \in \{1,2,3\}$, since $ \sum_{i=1}^3 P_{ii}^f = 3$,
$$
\left|\frac{\pa_i f(v)}{f(v)} + v_i \right|^2  \le 3\, \Delta_f^{-2}\, \bigg(  |v_j|^2 \, (P_{ij}^f)^2 + |v_i|^2 \, \bigg| (P_{ij}^f)^2 + P_{ii}^f\, (1 - P_{jj}^f)  \bigg|^2
$$
$$
+ \, \bigg| \int_{\R^3} R_{ij}^f(v,w)\, f(w) \, [w_i \, P_{ij}^f - w_j\, P_{ii}^f] \, dw \bigg|^2 \bigg) $$
$$ \le 3\, \Delta_f^{-2}\, \bigg(  |v_j|^2 \, (P_{ij}^f)^2  
 + 2\, |v_i|^2 \, (P_{ij}^f)^4 + 18\, |v_i|^2 \, (1 - P_{jj}^f)^2 $$
$$
 + \,9\, \bigg[ \int_{\R^3} |R_{ij}^f(v,w)|\,  f(w) \, ( |w_i| + |w_j|) \, dw \bigg]^2 \bigg) . $$
Then,
$$
\int_{\R^3} f(v) \,  \left| \frac{\pa_i f(v)}{f(v)} + v_i \right|^2 \, \la v \ra^{-3} \, dv 
\le 3\, \Delta_f^{-2}\, \bigg( \frac{11}2\, (P_{ij}^f)^2  + 18\, (1 - P_{jj}^f)^2 
$$
 $$  +\, 9\,  \int_{\R^3} \la v \ra^{-3} \, f(v) \, \bigg\{ \int_{\R^3} |R_{ij}^f(v,w)|^2\,  f(w) 
 \, |v-w|^{-3} \, dw \bigg\}$$
$$ \times \, \bigg\{ \int_{\R^3}   f(w) 
 \, |v-w|^{3} \, ( |w_i| + |w_j|)^2 \, dw \bigg\}\, dv \bigg) $$
 $$ \le  \Delta_f^{-2}\, \left( \frac{33}2 \, (P_{ij}^f)^2  + 54\, (1 - P_{jj}^f)^2
 + {27}\, \int_{\R^3} \int_{\R^3}   f(v)\,  f(w)\, |R_{ij}^f(v,w)|^2\, |v-w|^{-3} \, dw dv 
\right.$$
 $$ \left. \times\, \sup_{v\in \R^3}  \la v \ra^{-3} \int_{\R^3}   f(w)\, (4 \, |v|^3 + 4\, |w|^3) \, 2\, |w|^2\, dw  \right). $$
Observing then (cf. \cite{D}, p.11-12) that
$$
D(f) = \frac14 \sum_{i=1}^3 \sum_{j=1}^3 \int_{\R^3} \int_{\R^3}   f(v)\,  f(w)\, |R_{ij}^f(v,w)|^2\, |v-w|^{-3} \, dw dv,
$$
we end up with the estimate
$$
\int_{\R^3} f(v) \,  \left| \frac{\nabla f(v)}{f(v)} + v \right|^2 \, \la v \ra^{-3} \, dv 
\le  \Delta_f^{-2}\, \bigg( \frac{99}2 \, \sup_{i,j \in \{1,2,3\}, i\neq j}(P_{ij}^f)^2 
$$
$$ + \, 162 \, \sup_{j \in \{1,2,3\} } (1 - P_{jj}^f)^2  + \,3456\, M_5(f)\, D(f) \bigg) , $$
so that (\ref{six}) holds with $C = 3456$.
\end{proof}

\bigskip

\begin{Prop}\label{prop:Deltabis}
One can find $C:= C(\bar{H})$ depending only on $\bar{H}$
such that for all $f\ge 0$ satisfying (\ref{un}), (\ref{unbis}),
the following inequalities hold:
\begin{equation}\label{dix}
 \Delta_f \ge C(\bar{H}),
 \end{equation}
\begin{equation}\label{onze}  
  \sup_{i,j \in \{1,2,3\}, i\neq j}(P_{ij}^f)^2 \le C(\bar{H})\, M_5(f)\, D(f),
 \end{equation}
\begin{equation}\label{douze}  
  \sup_{i,j \in \{1,2,3\}, i\neq j}|P_{ii}^f - P_{jj}^f|^2 \le C(\bar{H})\, M_5(f)\, D(f). 
 \end{equation}

\end{Prop}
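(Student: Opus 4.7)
The three estimates in the proposition are proved by essentially different arguments. For (\ref{onze}) and (\ref{douze}), my plan is to derive two new identities, in the spirit of (\ref{quatre}), that express $P_{ij}^f$ and $P_{ii}^f - P_{jj}^f$ as double integrals against the kernel $R_{ij}^f(v,w)$. Specifically, multiplying formulas (\ref{quinze}) and (\ref{seize}) by $v_i\, f(v)$ and integrating over $v$ (using the normalizations (\ref{un}) together with $\int v_i\,\pa_i f\, dv = -1$, $\int v_i\, \pa_j f\, dv = 0$ for $i\neq j$, and $\int v_i v_j\, f\, dv = P_{ij}^f$) one readily obtains
$$
\iint_{\R^3 \times \R^3} v_i\, w_i\, R_{ij}^f(v,w)\, f(v) f(w)\, dv\, dw = -2\, P_{ij}^f,
$$
$$
\iint_{\R^3 \times \R^3} v_i\, w_j\, R_{ij}^f(v,w)\, f(v) f(w)\, dv\, dw = P_{ii}^f - P_{jj}^f .
$$
Applying Cauchy--Schwarz with weights $|v-w|^{\pm 3}$, recalling from \cite{D} that $\iint |R_{ij}^f|^2\, |v-w|^{-3}\, f(v) f(w)\, dv\, dw \le 4 D(f)$ for each fixed pair $i\neq j$, and combining the elementary bound $|v-w|^3 \le 4(|v|^3 + |w|^3)$ with $M_2(f) = 3$, one checks that
$$
\iint |v|^2 |w|^2\, |v-w|^3\, f(v) f(w)\, dv\, dw \;\le\; C\, M_5(f),
$$
which suffices to conclude (\ref{onze}) and (\ref{douze}). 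The specific bilinear-in-$(v,w)$ choice of multipliers $v_i w_i$ and $v_i w_j + v_j w_i$ is dictated by the need to keep the moment exponent down to $5$: the natural quadratic-in-$v$ multipliers (e.g.\ $v_j^2$ via (\ref{quatre})) would instead generate a spurious $M_7(f)$.

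For the non-degeneracy (\ref{dix}), I plan to use the entropy bound through the classical fact that, for fixed mean and covariance, the Boltzmann entropy $H = \int f \log f$ is minimized by a Gaussian. Let $g$ denote the centered Gaussian with covariance matrix $P^f$: then $H(f) \ge H(g)$, and a direct computation yields $H(g) = -\tfrac12 \log\big((2\pi)^3 \det P^f\big) - \tfrac32$, so that the bound $H(f) \le \bar H$ translates into the quantitative lower bound
$$
\det P^f \;\ge\; (2\pi)^{-3}\, e^{-2\bar H - 3} \;=:\; c_0(\bar H) \;>\; 0 .
$$
To pass from this lower bound on $\det P^f$ to a lower bound on each $2 \times 2$ principal minor $P_{ii}^f P_{jj}^f - (P_{ij}^f)^2$, I invoke the Cauchy interlacing theorem: if $\lambda_1 \le \lambda_2 \le \lambda_3$ denote the (positive) eigenvalues of $P^f$ and $\mu_1 \le \mu_2$ those of any $2\times 2$ principal submatrix, then $\mu_1 \ge \lambda_1$ and $\mu_2 \ge \lambda_2$, so that
$$
P_{ii}^f P_{jj}^f - (P_{ij}^f)^2 \;=\; \mu_1 \mu_2 \;\ge\; \lambda_1 \lambda_2 \;=\; \frac{\det P^f}{\lambda_3} \;\ge\; \frac{c_0(\bar H)}{3} ,
$$
the last step using the trivial bound $\lambda_3 \le \operatorname{tr}(P^f) = 3$. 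Taking the infimum over $i \neq j$ yields (\ref{dix}).

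The main obstacle here is really (\ref{dix}): the bounds (\ref{onze}) and (\ref{douze}) reduce to well-chosen Cauchy--Schwarz computations once the right identities are in hand, whereas (\ref{dix}) is a quantitative non-concentration statement whose role is precisely to prevent $f$ from being supported on a lower-dimensional affine subspace of $\R^3$ (which would make some $2\times 2$ minor of $P^f$ vanish). The Gaussian-comparison inequality combined with spectral interlacing is the cleanest route I see to quantify this, and it is essential for the conclusion of Theorem~\ref{thm:entropy}, since Proposition~\ref{prop:Delta} features $\Delta_f^{-2}$ as a prefactor.
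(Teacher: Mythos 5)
Your proof is correct, but it takes a genuinely different route from the paper's on all three estimates. For (\ref{dix}), the paper does not invoke the fact that the Gaussian minimizes $H$ for fixed covariance; it instead bounds $\Delta_f$ from below by $\inf_{|\theta|\le1}\inf_{k\neq l}\big(\int |v_k-\theta v_l|^2 f\,dv\big)^2$ and then runs a quantitative non-concentration (level-set) argument based on the bound $\sup_{|A|\le q}\int_A f\le Mq+\bar H/\log M$. Your combination of $H(f)\ge H(g)$ (legitimate here since $H(f)\ge-\tfrac32\log(2\pi)-\tfrac32$ by nonnegativity of $H(f\,|\,\mu)$ together with \eqref{un}) with Cauchy interlacing and $\lambda_3\le\operatorname{tr}P^f=3$ reaches the same conclusion with a sharper constant (exponent $2\bar H$ versus the paper's $16\bar H$) and no geometric covering step. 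For (\ref{onze})--(\ref{douze}), the paper substitutes (\ref{deux})--(\ref{trois}) into (\ref{quatre}) to get a pointwise-in-$v$ relation, integrates against $f(v)\la v\ra^{-5}\,dv$, obtains $C\,D(f)\,M_5(f)$ from weighted Cauchy--Schwarz on one side, and must then bound the other side from below by $S_f\big[(P^f_{ij})^2+\tfrac14(P^f_{jj}-P^f_{ii})^2\big]$, where $S_f$ is an angular infimum requiring a second level-set argument. Your route of pairing (\ref{quinze}) and (\ref{seize}) with $v_i f(v)\,dv$ yields the exact algebraic identities $\iint v_iw_iR^f_{ij}ff=-2P^f_{ij}$ and $\iint v_iw_jR^f_{ij}ff=P^f_{ii}-P^f_{jj}$ directly, after which a single Cauchy--Schwarz with the $|v-w|^{\pm3}$ split and the identity $D(f)=\tfrac14\sum_{i,j}\iint|R^f_{ij}|^2|v-w|^{-3}ff$ closes the argument with a universal numerical constant, entirely sidestepping the $S_f$ lower bound and making the $\bar H$-dependence in (\ref{onze})--(\ref{douze}) unnecessary. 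Both routes incur the $M_5(f)$ factor for the same structural reason: the $|v-w|^3$ weight in Cauchy--Schwarz supplies three extra powers of velocity on top of the two already present in $P^f$.
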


\begin{proof}[Proof of Proposition \ref{prop:Deltabis}] 
We first observe that (thanks to \cite{D}, p.15), for any $\delta>0$,
$$
\Delta_f := \inf_{i,j = 1,2,3; i \neq j} \left( P_{ii}^f\, P_{jj}^f -  (P_{ij}^f)^2\right)
$$
 $$ \ge \inf_{|\theta |\le 1} \,\, \inf_{k=i,j;\, l=i,j;\,  k\neq l} \bigg(\int |v_k - \theta\, v_l |^2\, \,f(v)\,  dv \bigg)^2$$ 
$$ \ge \delta^4 \, \inf_{|\theta |\le 1} \,\, \inf_{k=i,j;\, l=i,j;\,  k\neq l} 
\bigg(\int_{|v_k - \theta\, v_l| \ge \delta, \, |v|\le \sqrt{6}} \,f(v)\, dv
\bigg)^2 $$
 $$ \ge  \delta^4 \, \bigg( 1 - \sup_{|\theta |\le 1} \,\, \sup_{k=i,j;\, l=i,j;\,  k\neq l} \int_{|v_k - \theta\, v_l| \le \delta, \, |v|\le \sqrt{6}} f(v)\, dv - \int_{ |v|\ge \sqrt{6} } f(v)\, dv \bigg)^2 $$
$$ \ge \delta^4 \,\bigg(   \frac{1}{2} - \sup_{|\theta |\le 1} \,\, \sup_{k=i,j;\, l=i,j;\,  k\neq l} \int_{ \frac{|v_k - \theta\, v_l|}{\sqrt{1+\theta^2}} \le \frac{\delta}{\sqrt{1+\theta^2}}, \, |v|\le \sqrt{6} } f(v)\, dv  \bigg)^2 $$
$$ \ge \delta^4 \,\bigg(   \frac{1}{2} - \sup_{|\theta |\le 1} \, \sup_{|A| \le 48 \, \frac{\delta}{\sqrt{1+\theta^2}} }\,  \int_{ A } f(v)\, dv  \bigg)^2 . $$
Using now the estimate (for all $q>0$ and $M>1$)
\begin{equation}\label{trentesix}
\sup_{|A| \le q} \int_A f(v)\, dv \le M\,q + \frac{\bar{H}}{\log M}, 
\end{equation}
we see that
$$
 \Delta_f \ge \delta^4 \,\bigg(\frac{1}{2} - 48\,\delta\,M - \frac{\bar{H}}{\log M} \bigg)^2,
$$
so that taking $M = e^{4\, \bar{H}}$ and $48\,\delta\,M = \frac18$ (that is $\delta = 2^{-7} \, 3^{-1} \, e^{-4 \, \bar H}$), we end up with
$$
 \Delta_f \ge \frac1{2^{34}}\, \frac1{3^4}\, e^{-16 \, \bar{H}},
$$
and estimate (\ref{dix}) is proven.
 
\bigskip

We now turn to the proof of estimates (\ref{onze}) and (\ref{douze}).

\medskip

Inserting (\ref{deux}) and (\ref{trois}) in (\ref{quatre}), we see that
$$
(v_i^2 - v_j^2) \, P_{ij}^f + v_i\,v_j\, (P_{jj}^f - P_{ii}^f)
$$
$$  + \int_{\R^3} R_{ij}^f(v,w)\, f(w)
\, \bigg[v_i \,w_i\,  P_{jj}^f - v_i\,w_j\, P_{ij}^f - v_j\,w_i\, P_{ij}^f + v_j\,w_j\, P_{ii}^f \bigg] \, dw $$
$$ = [ (P_{ij}^f)^2 - P_{ii}^f\, P_{jj}^f] \, \int_{\R^3} R_{ij}^f(v,w)\, f(w) \, dw , $$
and writing
$$
|v_i \,w_i\,  P_{jj}^f - v_i\,w_j\, P_{ij}^f - v_j\,w_i\, P_{ij}^f + v_j\,w_j\, P_{ii}^f| \le 9 \, |v| |w|
$$
it follows
$$
 |(v_i^2 - v_j^2) \, P_{ij}^f + v_i\,v_j\, (P_{jj}^f - P_{ii}^f)| \le   
\int_{\R^3} |R_{ij}^f(v,w)|\, f(w)
\,  \big[9 + 9\, |v| |w| \big]\, dw .
$$
Then 
$$
\int_{\R^3}  f(v) \, \la v \ra^{-5}\,  \bigg|(v_i^2 - v_j^2) \, P_{ij}^f + v_i\,v_j\, (P_{jj}^f - P_{ii}^f) \bigg|^2
\, dv 
$$
$$ 
 \le \int_{\R^3}  f(v) \, \la v \ra^{-5}\,  \bigg| \int_{\R^3} |R_{ij}^f(v,w)|\, f(w)
\, \big[9 + 9\, |v| |w| \big]\, dw \bigg|^2 \, dv $$
$$ 
\le \int_{\R^3}  f(v) \, \la v \ra^{-5}\,  \bigg\{  \int_{\R^3} |R_{ij}^f(v,w)|^2\, f(w)\, |v-w|^{-3}\, dw  \bigg\} $$
$$ 
\times\, \bigg\{  \int_{\R^3} f(w)\, |v-w|^{3}\, \big[9 + 9\, |v| |w| \big]^2 dw \bigg\}
\, dv $$
$$  
\le C \, D(f)\,  \sup_{v \in \R^3} \la v \ra^{-5}\,  \int_{\R^3} f(w)\, (|v|^3 + |w|^3) \, (1 +|v|^2 |w|^2)\, dw $$
$$  
\le C \, D(f)\,  \sup_{v \in \R^3} \la v \ra^{-5}\,  \int_{\R^3} f(w)\, \Big( |v|^3 + |v|^5 |w|^2 + |w|^3 + |v|^2 |w|^5 \Big)\, dw $$
$$  \le C\, D(f)\, M_5(f), $$
where $C>0$ is a (computable) constant number.
\medskip

We now observe that 
$$
\int_{\R^3}  f(v) \, { \la v \ra^{-5}} \,  \bigg|(v_i^2 - v_j^2) \, P_{ij}^f + v_i\,v_j\, (P_{jj}^f - P_{ii}^f) \bigg|^2 \, dv 
$$
$$ \ge S_f \, \bigg[ (P_{ij}^f)^2 + \frac14\, (P_{jj}^f - P_{ii}^f)^2 \bigg] , $$
where
$$
S_f := \inf_{\phi \in \R} \int_{\R^3}  f(v) \, { \la v \ra^{-5}}  \,  |(v_i^2 - v_j^2) \,\cos \phi +
2 \, v_i\,v_j\, \sin \phi|^2 \, dv .
$$
Introducing cylindrical coordinates defined by $v_i = r\, \cos\theta$,  $v_j = r\, \sin\theta$, and $v_k = z$ (where $k\neq i$ and $k\neq j$), we see that for all $\var>0$ (and assuming without loss of generality that $i=1$, $j=2$, $k=3$)
$$
S_f = \inf_{\phi \in \R} \int_{z\in\R}\int_{\theta=0}^{2\pi}\int_{r\in \R_+}
  f(r\, \cos\theta, r\, \sin\theta, z) \, (1 + r^2 + |z|^2)^{ -5/2}\, r^4\, |\cos (2\theta -\phi)|^2  \, r\, dr d\theta dz
$$
$$ \ge |\sin \var|^2 \bigg[ \int_{z\in\R}\int_{\theta=0}^{2\pi}\int_{r\in \R_+} 
f(r\, \cos\theta, r\, \sin\theta, z) \,r^4\, (1 + r^2 + |z|^2)^{ -5/2}  \, r\, dr d\theta dz $$
$$   - \sup_{\phi \in \R} \int_{z\in\R}\int_{\theta=0}^{2\pi}\int_{r\in \R_+} 
 f(r\, \cos\theta, r\, \sin\theta, z) \, {\mathbf 1}_{ \{ |2\theta - \phi - \frac\pi{2} \Z|\le \var \} } 
 \, r\, dr d\theta dz \bigg] . $$
As a consequence, denoting by $|\cdot|$ the Lebesgue measure on $\R^3$, for all $\var$, 
$\delta$, $R_1$, $R_2>0$, 
$$
S_f  \ge |\sin \var|^2 \bigg[ \int_{z\in\R}\int_{\theta=0}^{2\pi}\int_{r\in \R_+} 
 f(r\, \cos\theta, r\, \sin\theta, z) 
$$
$$ \times\, (1 + r^2 + |z|^2)^{ -5/2}\, r^4\, {\mathbf 1}_{ \{ r \ge \delta \} }\,
{\mathbf 1}_{ \{ r^2 + |z|^2 \le R_1^2 \} }   \, r\, dr d\theta dz $$
$$ - \sup_{\phi \in \R} \int_{z\in\R}\int_{\theta=0}^{2\pi}\int_{r\in \R_+} 
 f(r\, \cos\theta, r\, \sin\theta, z) \, {\mathbf 1}_{ \{ r^2 + |z|^2 \le R_2^2 \} }   \, {\mathbf 1}_{ \{ |2\theta - \phi - \frac\pi{2} \Z \} |\le \var } 
 \, r\, dr d\theta dz $$
 $$ - \int_{z\in\R}\int_{\theta=0}^{2\pi}\int_{r\in \R_+} 
 f(r\, \cos\theta, r\, \sin\theta, z) \, \, 
{\mathbf 1}_{ \{ r^2 + |z|^2 \ge R_2^2 \} }   \, r\, dr d\theta dz  \bigg]$$ 
$$ \ge |\sin \var|^2 \bigg[ \frac{\delta^4}{(1 + R_1^2)^{ 5/2}} \, \bigg(1 - \int_{v\in\R^3} f(v)\,
{\mathbf 1}_{ \{ |v_1|^2 + |v_2|^2 \le \delta^2 \} } \, {\mathbf 1}_{\{ |v|\le R_1 \} } \, dv $$
$$ - \int_{v\in\R^3} f(v)\, {\mathbf 1}_{ \{ |v|\ge R_1 \} } \, dv \bigg)
  - \int_{v\in\R^3} f(v)\, {\mathbf 1}_{\{|v|\ge R_2\} } \, dv
 - \sup_{|A| \le 16\,R_2^3\,\var} \int_A f(v)\, dv \,\bigg] $$
$$ \ge |\sin \var|^2 \bigg[ \frac{\delta^4}{(1 + R_1^2)^{ 5/2}} \, \bigg(1 -  \sup_{|A| \le 2\pi\,R_1\,\delta^2} \int_A f(v)\, dv - \frac{3}{R_1^2} \bigg) $$
$$ - \frac{3}{R_2^2}   - \sup_{|A| \le 16\,R_2^3\,\var} \int_A f(v)\, dv \,\bigg] .$$
Using now  estimate (\ref{trentesix}), we see that for all $\var$, $\delta$, $R_1$, $R_2$, $M_1$, $M_2>0$, 
$$
S_f  \ge |\sin \var|^2 \bigg[ \frac{\delta^4}{(1 + R_1^2)^{ 5/2}} \, \bigg(1 - 2\pi\,M_1\,R_1\,\delta^2 
- \frac{\bar{H}}{\log M_1} - \frac{3}{R_1^2} \bigg) - \frac{3}{R_2^2}   - 16\, M_2\,R_2^3\,\var
- \frac{\bar{H}}{\log M_2} \bigg]. 
$$
The proof of (\ref{onze}), (\ref{douze}) is concluded by selecting successively $R_1$ large enough, $M_1$ large enough, $\delta>0$ small enough, $R_2$ large enough, $M_2$ large enough, and $\var>0$ small enough.

\par

For example, selecting $R_1$, $M_1$ and $\delta$ in such a way that
$$
\frac{3}{R_1^2}  = \frac{\bar{H}}{\ln M_1} = 2\pi\,M_1\,R_1\,\delta^2 = \frac14, 
$$
we end up with
$$
S_f  \ge |\sin \var|^2 \bigg[ \frac{13^{ -5/2}}{48} \frac{e^{-8\, \bar{H} }}{64\,\pi^2} 
 - \frac{3}{R_2^2}   - 16\, M_2\,R_2^3\,\var
- \frac{\bar{H}}{\log M_2} \bigg]. 
$$
Then, we fix $R_2, M_2$ and $\var$ in such a way that
$$
\frac{3}{R_2^2}  = \frac{\bar{H}}{\log M_2} = 16\,M_2\,R_2^2\,\var 
= \frac{13^{ -5/2}}{48} \frac{e^{-8\, \bar{H} }}{256\,\pi^2}.
$$
\end{proof}

We can now conclude the proof of Theorem \ref{thm:entropy}.

\begin{proof}[Proof of Theorem \ref{thm:entropy}]
Thanks to Proposition \ref{prop:Delta} and Proposition \ref{prop:Deltabis}, we see that it only remains
to estimate $\sup_{j = 1,2,3} |P_{jj}^f - 1|$ in terms of $\sup_{i,j \in \{1,2,3\}, i\neq j}|P_{ii}^f - P_{jj}^f|$.

\medskip

In order to do so, we use the identity $ \sum_{j=1}^3 P_{jj}^f = 3$. We observe that
$$
|P_{11}^f - 1| = |P_{11}^f - \frac13\, \sum_{j=1}^3 P_{jj}^f | \le \frac13\, |P_{11}^f - P_{22}^f |
+  \frac13\, |P_{11}^f - P_{33}^f | . 
$$

Doing the same computation with $P_{22}^f$, $P_{33}^f$, we end up with
$$
 \sup_{j = 1,2,3}  |P_{jj}^f - 1|  \le \frac23\, \sup_{i,j \in \{1,2,3\}, i\neq j} |P_{ii}^f - P_{jj}^f |, 
$$
and Theorem \ref{thm:entropy} is proven.
\end{proof}
\medskip

We conclude this section with the proof of Corollary \ref{cerc}.
\medskip

\begin{proof}[Proof of Corollary \ref{cerc}]
Let $f\ge 0$ satisfy the normalization (\ref{un}), 
and define the weighted (with weight $\la v \ra^{\alpha}$ for $\alpha \in \R$)
relative Fisher information of $f$ with respect to $\mu$ (the centred reduced Gaussian) by
$$
I_\alpha (f|\mu) := \int_{\R^3} \frac{ |\nabla (f / \mu)|^2}{(f/ \mu)} \, \la v \ra^{\alpha} \, d\mu(v)
 = \int_{\R^3} f \,\left| \frac{\nabla f}{f} + v  \right|^2  \, \la v \ra^{\alpha} \, dv.
$$
With this notation, Theorem \ref{thm:entropy} becomes
\be\label{nt1}  
D(f) \ge C(\bar H)\, M_5(f)^{-1}\, I_{-3} (f|\mu).
\ee 
We define $\nu (v) := Z_1^{-1}\, \la v \ra^{-3} \,\mu(v)$ and $g(v) := Z_2^{-1} \la v \ra^{-3} f(v)$, where $Z_1 := \int \la v \ra^{-3}\, d\mu(v)$ and $Z_2 := \int \la v \ra^{-3} f(v) \, dv$ are normalization constants so that 
$\int \nu(v) \, dv = \int g(v) \, dv = 1$. Therefore we can rewrite
$$
I_{-3}(f| \mu) = \int \frac{ |\nabla (f / \mu)|^2}{(f/ \mu)} \, \la v \ra^{-3} \, d\mu(v) = Z_2  \int \frac{ |\nabla ( g / \nu)|^2}{( g/ \nu)} \, d\nu(v) =: Z_2 \, I(  g | \nu),
$$
where $I( g|\nu) := I_0( g|\nu)$ denotes the standard (i.e.\ without weight) relative Fisher information of $g$ with respect to $\nu$.
We now observe that $d\nu(v) = e^{-U(v)}\, dv$, with $U(v) = \frac{1}{2}|v|^2 + \frac{3}{2} \log (1+|v|^2) + U_0$,
 and where $U_0:= \log( (2\pi)^{3/2}\,Z_1 )$ is a normalization constant. We easily compute
$$
 \mathrm{Hess}\,\, U(v) = Id + \frac{3}{\la v \ra^2} \,Id - \frac{6}{\la v \ra^4} v \otimes v .
$$
Hence we obtain (for all $\xi, v \in \R^3$),
$$
\la \mathrm{Hess}\, U(v) \xi , \xi \ra = \left(1+\frac{3}{\la v \ra^2} \right)\,|\xi|^2 - \frac{6}{\la v \ra^4} (\xi \cdot v)^2 \ge \left(1 + \frac{3}{\la v \ra^2} - \frac{6\, |v|^2}{\la v \ra^4} \right) |\xi|^2 > \frac58\, |\xi|^2 .
$$
Indeed, for all  $z \in \R_+$ ($z=|v|^2$),
$$
1+ \frac{3}{1+z} - \frac{6 z}{ (1+z)^2} = \frac{z^2 -z +4}{(1+z)^2} \ge \frac{5}{8}.
$$
The probability measure $d\nu$ satisfies then a log-Sobolev inequality thanks to the Bakry-\'Emery criterion (see \cite{BE,AMTU}).
Therefore, for some $C>0$, 
$$
I( g | \nu ) \ge 
C\, \int   \frac{g}{\nu} \, \log \frac{g}{\nu} \, d\nu(v)
= C\, \int \left\{  \frac{g}{\nu} \, \log \frac{g}{\nu} +1 - \frac{g}{\nu} \right\} d\nu(v).
$$
Thanks to estimate (\ref{nt1}), we finally deduce, for some (new) constant $C(\bar H) >0$, 
$$
\ba
D(f) \ge C(\bar{H}) \, {  (M_5(f))^{-1} } \,  \int  \left\{   f
\log \left( \frac{Z_1}{Z_2} \frac{f}{\mu}  \right) + \frac{Z_2}{Z_1} \mu - f \right\} \la v \ra^{-3} \, dv,
\ea
$$
and the proof of \eqref{cc} is complete. Now, for any $R>0$, we estimate the integral from below remembering that $x \log x + 1 - x \ge 0$ for $x>0$,
$$
\ba
D(f) &\ge C(\bar{H}) \, {  (M_5(f))^{-1} } \, R^{-3} \,  \int_{\la v \ra \le R}  \left\{   f
\log \left( \frac{Z_1}{Z_2} \frac{f}{\mu}  \right) + \frac{Z_2}{Z_1} \mu - f \right\}  \, dv \\
&\ge C(\bar{H}) \, {  (M_5(f))^{-1} } \, R^{-3} 
\Bigg( \int_{\la v \ra \le R}  f \log (f / \mu) \, dv 
+\log (Z_1/Z_2)  \int_{\la v \ra \le R} f \, dv \\
&\qquad\qquad
+ \int_{\la v \ra \le R} (\mu - f) \, dv
+ (Z_2/Z_1 - 1) \int_{\la v \ra \le R} \mu \, dv \Bigg) .
\ea
$$
$$
\ba
&\ge C(\bar{H}) \, {  (M_5(f))^{-1} } \, R^{-3} 
\Bigg( \int  f \log (f / \mu) \, dv 
+\frac{Z_2}{Z_1} \left(  \frac{Z_1}{Z_2}\log (Z_1/Z_2) +1 - \frac{Z_1}{Z_2} \right) \\
&\qquad\qquad
- \int_{\la v \ra \ge R}  f \log (f / \mu) \, dv 
+\log (Z_2/Z_1)  \int_{\la v \ra \ge R} f \, dv \\
&\qquad\qquad
+ \int_{\la v \ra \ge R} (f - \mu) \, dv
+ (1-Z_2/Z_1) \int_{\la v \ra \ge R} \mu \, dv \Bigg) \\
&\ge C(\bar{H}) \, {  (M_5(f))^{-1} }\, R^{-3} 
\Bigg( \int  f \log (f / \mu) \, dv 
- \int_{\la v \ra \ge R}  f \log (f / \mu) \, dv \\
&\qquad\qquad\qquad
+(1+\log (Z_2/Z_1))  \int_{\la v \ra \ge R} f \, dv 
- (Z_2/Z_1) \int_{\la v \ra \ge R} \mu \, dv \Bigg) .
\ea
$$
Since $\int f = \int \mu=1$ and $\int \la v \ra^2 f = \int \la v \ra^2 \mu = 4$, we easily obtain that $2^{- 11/2} \le Z_1, Z_2 \le 1$. Then
$$
\ba
D(f) 
&\ge C(\bar{H}) \, {  (M_5(f))^{-1} }  \, R^{-3} 
\Bigg( \int  f \log (f / \mu) \, dv 
- \int_{\la v \ra \ge R}  f \log f  \, dv \\
&\qquad\qquad\qquad
-C  \int_{\la v \ra \ge R} \la v \ra^2 \, f \, dv 
- C \int_{\la v \ra \ge R} \mu \, dv \Bigg) ,
\ea
$$
which completes the proof of \eqref{eq:D<H}.
\end{proof}

\section{Moments estimates}\label{sec:moment}

In this section we prove estimates for the polynomial and exponential $L^1$-moments defined in \eqref{moment-poly} and \eqref{moment-exp}. For the sake of completeness we shall consider, only in this section, the Landau operator $Q$ (see \eqref{Q}) for general soft potentials, i.e.\ a matrix $a_{ij}$ (see \eqref{a}) for the whole range of soft potentials $-4 < \gamma < 0$. It is worth mentioning that, in the case $-2 \le \gamma \le 0$, estimates for polynomial moments have been established in \cite{Villani-BoltzmannBook,Wu,KC2} and stretched exponential moments in \cite{KC2}. Moreover, in the case $-4 < \gamma < -2$ polynomial moments estimates have been established in \cite{Villani-these,D}. We shall improve the above mentioned results in Lemma~\ref{lem:moment-poly-MS}, Corollary~\ref{cor:moment-exp-MS}, Lemma~\ref{lem:moment-poly-VS} and Corollary~\ref{cor:moment-exp-VS}.

\medskip

We begin with a key lemma on the coercivity of the collision operator in weighted $L^1$-space.
\par

\begin{lem}\label{lem:coercivity} 
Assume $-4 < \gamma < 0$. 
Let $f$ be a nonnegative function and  $\chi$ be either $\mathbf 1_{|\cdot|\le 1}$, or a smooth $ C^\infty_c(\R)$ radially symmetric cutoff function that satisfies $\mathbf 1_{B_{1/2}} \le \chi \le \mathbf 1_{B_1}$.  Let  $\chi_\eta (\cdot) = \chi(\cdot/\eta)$ with 
$\eta \in ]0,1]$, $l >2$ and 
$$ 
I= \iint_{\R^3 \times \R^3} f(v) f(w) \, |v-w|^{\gamma} \, \chi_\eta^c (v-w) \, \la v \ra^{l-2} \{ - \la v \ra^{2} + \la w \ra^2 \} \, dw \, dv ,
$$ 
where $\chi_\eta^c=1-\chi_\eta$.
\par
Then there exist constants $K,C>0$ such that
$$
\ba
I &\le - K\, M_0(f)^{1 - \gamma/2}\,M_2(f)^{\gamma/2} \, M_{l+\gamma}(f) 
+ C M_2(f) \, M_{l-2+\gamma}(f) \\
&\quad + C (M_2(f)/M_0(f))^{l/2-1 + \gamma} \, M_0(f) \, M_2(f) .
\ea
$$

\end{lem}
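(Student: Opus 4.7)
I would split the domain of integration into a coercive region (where $\la w\ra^2-\la v\ra^2$ is strongly negative) and its complement, separated by the scale $R := c_0\sqrt{M_2(f)/M_0(f)}$, the constant $c_0$ being chosen large enough that $R\ge 2$ (so in particular $R/2\ge \eta$) and that Markov's inequality gives $\int_{|w|\le R/2}f\,dw\ge M_0(f)/2$. This specific choice of $R$ is what drives the exponents appearing in the statement: $R^\gamma=c(M_2/M_0)^{\gamma/2}$ produces the leading factor $M_0^{1-\gamma/2}M_2^{\gamma/2}$, and $R^{l-2+2\gamma}=c(M_2/M_0)^{l/2-1+\gamma}$ produces the last remainder factor.

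\textbf{Coercive piece.} Restrict to $G:=\{(v,w):|v|\ge R,\,|w|\le|v|/2\}$. On $G$ one has $|v-w|\in[|v|/2,\,3|v|/2]$, so $|v-w|\ge R/2\ge\eta$ and hence $\chi_\eta^c(v-w)=1$; moreover $|v-w|^\gamma\ge c_\gamma|v|^\gamma$ since $\gamma<0$, while $\la w\ra^2-\la v\ra^2=|w|^2-|v|^2\le-\tfrac{3}{4}|v|^2$ and $\int_{|w|\le|v|/2}f(w)\,dw\ge M_0(f)/2$ by the choice of $R$. Multiplying these four inputs together yields
\[
I\big|_G \;\le\; -c\,M_0(f)\,\int_{|v|\ge R}f(v)\,|v|^{l+\gamma}\,dv,
\]
and a tail comparison $\int_{|v|\ge R}f\la v\ra^{l+\gamma}\ge M_{l+\gamma}(f)-CR^{(l+\gamma)_+}M_0(f)$ (with $(\cdot)_+$ the positive part), combined with the substitution $R^2\sim M_2/M_0$, extracts the leading contribution $-K\,M_0^{1-\gamma/2}M_2^{\gamma/2}M_{l+\gamma}(f)$, plus a lower-order correction that will be absorbed into the remainder estimates.

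\textbf{Remainder and main obstacle.} The complement $G^c$ splits naturally into $\{|v|\le R\}$ and $\{|v|\ge R,\,|w|\ge|v|/2\}$, and on each piece the integrand may be positive; these contributions must be upper-bounded with constants independent of $\eta\in(0,1]$, which is the main difficulty since the naive pointwise bound $|v-w|^\gamma\chi_\eta^c\le C\eta^\gamma$ blows up as $\eta\to 0$. The resolution is to exploit the cancellation identity
\[
\la w\ra^2-\la v\ra^2=(w-v)\cdot(w+v),\qquad |\la w\ra^2-\la v\ra^2|\le |v-w|\,(|v|+|w|),
\]
which trades one power of $|v-w|$, so that the full integrand is controlled by $|v-w|^{\gamma+1}(|v|+|w|)\,\la v\ra^{l-2}\,f(v)f(w)$, a function whose $w$-convolution is locally integrable precisely in the admissible range $\gamma>-4$, uniformly in $\eta$. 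Together with a Peetre-type estimate $|v-w|^{\gamma+1}\le C(\la v\ra^{\gamma+1}+\la w\ra^{\gamma+1})$, I expect the high-high sub-region to contribute $\le C\,M_2(f)\,M_{l-2+\gamma}(f)$ and the low-$v$ sub-region (where $\la v\ra^{l-2}\le CR^{l-2}$) to contribute $\le C\,R^{l-2+2\gamma}\,M_0(f)\,M_2(f)$, matching the two remainder terms of the statement. A secondary but important bookkeeping point is that, in order to recover the leading coefficient in the exact form $M_0^{1-\gamma/2}M_2^{\gamma/2}$, one must retain rather than discard the small factor $|v|^\gamma\le R^\gamma$ on the high-energy tail of $G$; otherwise the leading coefficient degrades, even if the resulting bound remains coercive.
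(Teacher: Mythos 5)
Your coercive region $G$, the choice of scale $R\sim\sqrt{M_2(f)/M_0(f)}$, and the extraction of the leading term from the tail $\{|v|\ge R\}$ match the spirit of the paper's treatment of the analogous quantity (called $A$ in the paper's proof), and your bookkeeping remark that the factor $|v|^\gamma\le CR^\gamma$ must be kept on the tail is correct. However, the treatment of the remainder on $G^c$ — which is precisely where the difficulty of the lemma lives — contains a genuine gap.

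The cancellation identity $|\la w\ra^2-\la v\ra^2|\le|v-w|(|v|+|w|)$ gains only one power of $|v-w|$, and this is not enough to make the near-diagonal contribution bounded uniformly in $\eta$ when $\gamma<-1$ (and hence in the Coulomb case $\gamma=-3$). After the cancellation the kernel is $|v-w|^{\gamma+1}\chi_\eta^c$, which is indeed in $L^1_{loc}$ for $\gamma>-4$, but this is the wrong notion of control: $f$ is only assumed nonnegative with finite moments, with no $L^p$ regularity for $p>1$, so the bilinear quantity $\iint_{\eta\le|v-w|\le 1}|v-w|^{\gamma+1}f(v)f(w)(\cdots)\,dv\,dw$ cannot be bounded by moments of $f$ alone. (Test against $f$ close to a unit mass concentrated near a point: the quantity scales like $\eta^{\gamma+1}$, which diverges as $\eta\to 0$ when $\gamma<-1$.) To turn local integrability of the kernel into a bound one would need $f$ in some $L^p$, $p>1$, which the statement does not assume. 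The paper circumvents this entirely by a sign argument rather than a size argument: it decomposes according to the symmetric region $\{|v-w|<|v|\}\cap\{|v-w|<|w|\}$ versus its complement, and on the symmetric region it uses Young's inequality $\la v\ra^{l-2}\la w\ra^2\le\frac{l-2}{l}\la v\ra^l+\frac2l\la w\ra^l$ together with the $v\leftrightarrow w$ symmetry of the region and of the radial kernel $|v-w|^\gamma\chi_\eta^c$ to show that the contribution is nonpositive — equivalently, the symmetrized integrand carries the factor $(\la w\ra^2-\la v\ra^2)(\la v\ra^{l-2}-\la w\ra^{l-2})\le 0$. The same symmetry argument is used a second time on the complement to reduce $\chi_\eta^c$ to $\mathbf 1_{|v-w|\ge 1}$, after which the kernel has no singularity and the decomposition into a coercive part and a remainder, much as you propose, goes through. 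Your cancellation is the pointwise shadow of the paper's sign argument, but one gained power of $|v-w|$ does not suffice; the symmetry argument gains the full sign, which is what makes the bound $\eta$-independent without any regularity of $f$.
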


\begin{proof}
We decompose the integral into two parts $I = I_1+I_2$ with 
$$
\ba 
I_1 &= \iint_{\{|v-w| <  |w| \} \cap \{ |v-w| <  |v|\}} f(v) f(w) |v-w|^{\gamma} \chi_\eta^c(v-w) \la v \ra^{l-2} \{ - \la v \ra^{2} + \la w \ra^2 \} \, dw \, dv,\\
I_2 &= \iint_{\{|v-w| \ge  |w| \} \cup \{ |v-w| \ge  |v|\}} f(v) f(w) |v-w|^{\gamma} \chi_\eta^c(v-w)  \la v \ra^{l-2} \{ - \la v \ra^{2} + \la w \ra^2 \} \, dw \, dv. 
\ea
$$
For the first term $I_1$, we easily get $I_1 \le 0$ thanks to Young's
 inequality and using the symmetry of the region $\{|v-w| <  |w| \} \cap \{ |v-w| <  |v|\}$:
$$
\ba
& \iint_{\{|v-w| <  |w| \} \cap \{ |v-w| <  |v|\}} f(v) f(w) |v-w|^{\gamma} \chi_\eta^c(v-w)\, \la v \ra^{l-2} \la w \ra^2 \, dw \, dv \\
&\qquad 
\le 
\iint_{\{|v-w| <  |w| \} \cap \{ |v-w| <  |v|\}} f(v) f(w) |v-w|^{\gamma} \chi_\eta^c(v-w)\, \bigg[ \frac{l-2}l\, \la v \ra^{l}
+ \frac2l\, \la w \ra^l \bigg]\, dw \, dv \\
&\qquad
= \iint_{\{|v-w| <  |w| \} \cap \{ |v-w| <  |v|\}} f(v) f(w) |v-w|^{\gamma} \chi_\eta^c(v-w)\, \la v \ra^{l}  \, dw \, dv  .
\ea
$$
 
Next we observe that
$$
\ba
I_2 &=\iint_{\{|v-w| \ge  |w| \} \cup \{ |v-w| \ge  |v|\}} f(v) f(w) |v-w|^{\gamma} (\chi_\eta^c(v-w)- {\bf 1}_{|v-w| \ge 1}) \la v \ra^{l-2} \{ - \la v \ra^{2} + \la w \ra^2 \}  \, dw \, dv\\
&  \quad- \iint_{\{|v-w| \ge  |w| \} \cup \{ |v-w| \ge  |v|\}} f(v) f(w) |v-w|^{\gamma} \, {\bf 1}_{|v-w| \ge 1} \, \la v \ra^{l-2} \{ \la v \ra^2 - \la w \ra^2 \} \, dw \, dv.
\ea
$$
Using the estimate $ {\bf 1}_{|v-w| \ge 1} \le {\bf 1}_{|v-w|\ge \eta} \le \chi_\eta^c(v-w)$ since $\eta \in ]0,1]$ and following 
an argument similar to the one used for $I_1$, we obtain that the first term in the right-hand side of the
previous identity is nonpositive. Hence we have
$$
\ba
I_2 &\le -   \iint_{\{|v-w| \ge  |w| \} \cup \{ |v-w| \ge  |v|\}} f(v) f(w) |v-w|^{\gamma} \, {\bf 1}_{|v-w| \ge 1} \, \la v \ra^{l-2} \{ \la v \ra^2 - \la w \ra^2 \} \, dw \, dv
=: A + B,
\ea
$$
and we estimate each term separately.

For the term $A$, we first write that
$$
A  \le - \iint_{ \{|v-w| \ge  |w| \} } f(v) f(w) |v-w|^{\gamma} \, {\bf 1}_{|v-w| \ge 1} \, \la v \ra^{l} \, dw \, dv,
$$
and then we notice that the region $ \{|v-w| \ge  |w|\} \cap \{ |v-w|\ge 1 \}$ contains $ \{ |v| \ge 2 r  \} \cap \{ |w| \le r \}$ for any $r \ge 1$. Therefore, using that $-|v-w|^\gamma \le  - C \la w \ra^\gamma \la v \ra^\gamma $, 
$$
\ba
A &\le 
-  C \iint_{\{ |v| \ge 2 r  \} \cap \{ |w| \le r \} } f(v) f(w) |v-w|^{\gamma}  \la v \ra^{l} \, dw \, dv\\
&\le - C\left( \int_{ \{ |w| \le r \} }  \la w \ra^\gamma f(w) \, dw   \right)
\left( \int_{\{ |v| \ge 2 r  \} } \la v \ra^{l+\gamma} f(v) \,dv\right).
\ea
$$
We can easily compute
$$
\ba
\int_{ \{ |w| \le r \} }  \la w \ra^\gamma f(w) \, dw
&\ge \la r \ra^\gamma \int _{|w| \le r} f(w) \, dw \\
&= \la r \ra^\gamma \left(M_0(f)  - \int_{ \{ |w| \ge r \} }  f(w) \, dw  \right)
\ge \la r \ra^\gamma \left( M_0(f) -  \frac{M_2(f)}{\la r \ra^2} \right),
\ea
$$
and also
$$
\int_{\{ |v| \ge 2 r  \} }  \la v \ra^{l+\gamma} f(v) \, dv
= M_{l+\gamma}(f) - \int_{\{ |v| \le 2 r  \} }  \la v \ra^{l+\gamma} f(v) \, dv
\ge M_{l+\gamma}(f) - \la 2 r \ra^{l-2+\gamma} M_2(f).
$$
Gathering the previous estimates, we get
\be\label{eq:A}
A \le C \left( \frac{M_2(f)}{\la r \ra^2} - M_0(f) \right)\la r \ra^{\gamma} M_{l + \gamma}(f)
+ C \la 2r \ra^{l-2+2\gamma} M_0(f)\, M_2(f)
\ee
$$
\le - K \,M_0(f)^{1 - \gamma/2}\, M_2(f)^{\gamma/2}\, M_{l+\gamma}(f) + C (M_2(f)/M_0(f))^{l/2-1 + \gamma} \,M_0(f) M_2(f)
$$
by choosing 
$r$ such that $M_2(f)/\la r \ra^2 = M_0(f)/2$.

For the term $B$, we first decompose it into $B = B_1 + B_2 + B_3$, with
$$
\ba
B_1 &:=  \iint_{\{ |v-w| \ge  |v|\} \cap \{ |v-w|\ge 1 \}  } 
f(v) f(w) |v-w|^{\gamma}  \la v \ra^{l-2} \la w \ra^2  \, dw \, dv, \\
B_2 &:=  \iint_{\{ |w| \le |v-w| \le 2 |w| \} \cap \{ |v-w|\ge 1 \} } 
f(v) f(w) |v-w|^{\gamma}  \la v \ra^{l-2} \la w \ra^2  \, dw \, dv, \\
B_3 &:=  \iint_{\{|v-w| \ge  2 |w| \}  \cap \{ |v-w|\ge 1 \}  } 
f(v) f(w) |v-w|^{\gamma} \, \la v \ra^{l-2} \la w \ra^2  \, dw \, dv ,
\ea
$$
and we claim that
\be\label{Bj}
B_j \le C M_2(f)\, M_{l - 2 + \gamma}(f), \quad j=1,2,3.
\ee
Indeed, we first remark that for all the terms, we have $|v-w|^\gamma \le C\la v - w \ra^{\gamma}$ since $ |v-w| \ge 1 $, thus in order to prove \eqref{Bj}, we only need to prove that $|v-w| \ge c |v|$ for some constant $c>0$ in each case $j=1,2,3$. 
The first case $j=1$ is immediate since $|v-w| \ge  |v|$. We then observe that $|w| \le |v-w| \le 2 |w|$ implies $|v-w| \sim |w|$ and also $|v| \le C |w|$, which completes the case $j=2$. Finally, when $|v-w| \ge  2 |w|$, we obtain $|v-w| \sim |v|$ and the case $j=3$ also holds.

We get the desired result by patching together estimates \eqref{eq:A} and \eqref{Bj}.
\end{proof}

 We first state and prove estimates for $L^1$-moments in the moderately soft potentials case $-2 \le \gamma < 0$. We improve the results of \cite{Villani-BoltzmannBook, Wu, KC2}.

\begin{lem}\label{lem:moment-poly-MS}
Assume that $-2 \le \gamma <0$.
Let $f_0 \in L^1_2 \cap L \log L$ and consider any global $H-$ or weak solution $ f $ to the spatially homogeneous Landau equation (\ref{landau}) with initial data $f_0$. Suppose further that $f_0 \in L^1_l$ for some $l > 2$. Then, there exists a constant $C>0$ depending on $\gamma$, $M_0(f_0)$, $M_2(f_0)$ 
 (but not on $l$) such that, for all $t\ge 0$,
$$
M_l(f(t)) \le M_l(f(0)) + C \, l^{\frac{l+\gamma}{2}} \, t .
$$

\end{lem}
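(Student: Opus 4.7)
The plan is to differentiate $M_l(f(t))$ using the weak formulation~\eqref{Qweak2} with test function $\varphi(v)=\langle v\rangle^l$; since this weight is not in $W^{2,\infty}$, one first uses the truncated weight $\langle v\rangle^l\chi(v/R)$ (which is admissible by Definitions~\ref{def:H-sol}--\ref{def:w-sol}) and then passes to the limit $R\to\infty$, a standard step. The derivatives $\partial_i\varphi=l\,v_i\langle v\rangle^{l-2}$ and $\partial_{ij}\varphi=l\,\delta_{ij}\langle v\rangle^{l-2}+l(l-2)\,v_iv_j\langle v\rangle^{l-4}$ bring out the explicit powers $l$ and $l(l-2)$. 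Using $\Pi(z)z=0$, $\mathrm{tr}\,\Pi=2$, and the identity $a_{ij}(z)v_iv_j=|z|^{\gamma}|v\times z|^2$, the $a_{ij}\partial_{ij}\varphi$ contribution simplifies to $2l|v-w|^{\gamma+2}\langle v\rangle^{l-2}+l(l-2)|v-w|^\gamma|v\times w|^2\langle v\rangle^{l-4}$, while the $b_i$-term gives $-2l|v-w|^\gamma\{(v-w)\cdot v\,\langle v\rangle^{l-2}-(v-w)\cdot w\,\langle w\rangle^{l-2}\}$.

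After symmetrizing in $v\leftrightarrow w$ and expanding $|v-w|^{\gamma+2}=|v-w|^\gamma|v-w|^2$, the quadratic cross-terms in $v$ and $w$ cancel between the $a$- and $b$-contributions, and the leading piece reduces to
\begin{equation*}
2l\iint_{\R^3\times\R^3}|v-w|^\gamma\,\langle v\rangle^{l-2}\bigl\{-\langle v\rangle^2+\langle w\rangle^2\bigr\}\,f(v)\,f(w)\,dv\,dw.
\end{equation*}
Splitting the integrand via $\chi_\eta+\chi_\eta^c$ (taking $\eta=1$), the $\chi_\eta^c$-piece is precisely the integral $I$ controlled by Lemma~\ref{lem:coercivity}, while the near-diagonal $\chi_\eta$-piece is bounded by $C\,l\,M_{l-1}(f)\,M_0(f)$ using $|\langle w\rangle^2-\langle v\rangle^2|\le|v-w|(\langle v\rangle+\langle w\rangle)$, $\langle w\rangle\le C\langle v\rangle$ on $\{|v-w|\le 1\}$, and the local integrability of $|v-w|^{\gamma+1}$ (valid since $\gamma>-4$). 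The remaining cross-product term $l(l-2)\iint|v-w|^\gamma|v\times w|^2(\langle v\rangle^{l-4}+\langle w\rangle^{l-4})f(v)f(w)$ is treated analogously using $|v\times w|\le|v|\,|v-w|$ near the diagonal, producing at worst $C\,l^2\,M_{l-2}(f)\,M_2(f)$.

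Applying Lemma~\ref{lem:coercivity} and invoking the conservation of $M_0,M_2$, we arrive at a differential inequality of the schematic form
\begin{equation*}
\frac{d}{dt}M_l(f)\le -K_0\,l\,M_{l+\gamma}(f)+C\,l^2\bigl\{M_{l-2+\gamma}(f)+M_{l-2}(f)+M_{l-1}(f)\bigr\}+C\,l\,K^l,
\end{equation*}
with $K_0,C,K>0$ depending only on $\gamma$, $M_0(f_0)$ and $M_2(f_0)$. For $-2\le\gamma<0$, $l>2$, and each positive moment $M_p$ appearing on the right (after a further interpolation of $M_{l-1}$ when $\gamma<-1$), log-convexity of $p\mapsto\log M_p(f)$ yields $M_p\le M_{l+\gamma}^{\theta}\,M_0^{1-\theta}$ with $\theta=p/(l+\gamma)<1$. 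A Young inequality with dual exponent $1/(1-\theta)$ of order $l$ then absorbs every positive moment into $\tfrac12 K_0\,l\,M_{l+\gamma}(f)$ at the cost of an additive constant of order $l^{(l+\gamma)/2}$; dropping the remaining (nonpositive) dissipation gives $\dot M_l(f)\le C\,l^{(l+\gamma)/2}$, and integration in time yields the stated bound.

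The main obstacle is the precise tracking of $l$-dependent constants across the Povzner-type reorganization, the interpolation, and the Young step, so as to reach the exponent $(l+\gamma)/2$ and no larger power of $l$. A secondary technical difficulty is the sub-case $-2\le\gamma<-1$, in which $l-1>l+\gamma$ and $M_{l-1}$ cannot be interpolated directly between $M_0$ and $M_{l+\gamma}$; this is circumvented by interpolating $M_{l-1}$ between $M_0$ and $M_l$ and closing the resulting self-referential inequality by a Gronwall-type argument, using that $M_l(0)$ is finite by assumption. The rigorous justification of using $\varphi(v)=\langle v\rangle^l$ as test function for an $H$- or weak solution is handled by the standard truncation procedure mentioned above.
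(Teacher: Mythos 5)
Your algebraic setup and the use of Lemma~\ref{lem:coercivity} on the off-diagonal region match the paper, but the treatment of the near-diagonal piece of the main term is where the argument breaks down, and this is where the paper does something genuinely different.

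You bound
\[
2l\iint_{|v-w|\le 1}|v-w|^{\gamma}\,\la v\ra^{l-2}\bigl\{-\la v\ra^{2}+\la w\ra^{2}\bigr\}\,f(v)f(w)\,dv\,dw
\]
by putting absolute values, using $|\la w\ra^{2}-\la v\ra^{2}|\le |v-w|(\la v\ra+\la w\ra)$ and $\la w\ra\sim\la v\ra$ on $\{|v-w|\le1\}$, which reduces matters to $\iint_{|v-w|\le 1}|v-w|^{\gamma+1}\la v\ra^{l-1}f(v)f(w)$, and you claim this is $\le C\,M_{l-1}(f)M_0(f)$ by "local integrability of $|v-w|^{\gamma+1}$". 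That deduction is not valid when $-2\le\gamma<-1$: local integrability of the kernel does not make $\sup_v\int_{|v-w|\le 1}|v-w|^{\gamma+1}f(w)\,dw$ finite when $f$ is merely $L^1$ (mass can concentrate near $v$), and in the present setting no $L^p$ bound on $f$ is available. So for $-2\le\gamma<-1$ the $M_{l-1}M_0$ bound is simply not established. Moreover, even if one accepts it, your proposed remedy for that sub-case — interpolating $M_{l-1}\le M_0^{1/l}M_l^{(l-1)/l}$ and using a Gronwall-type closure — yields $\frac{d}{dt}M_l\lesssim l\,M_l^{(l-1)/l}$, hence $M_l(t)^{1/l}\lesssim M_l(0)^{1/l}+Ct$, i.e.\ $M_l(t)\lesssim (1+t)^l$, which is polynomial of degree $l$ in time and not the claimed linear-in-$t$ bound $M_l(f_0)+Cl^{(l+\gamma)/2}t$.

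The paper avoids this entirely by exploiting the sign structure that your absolute-value step destroys. For the corresponding term $T_{21}$, one writes $\la v\ra^{l-2}\la w\ra^{2}\le\frac{l-2}{l}\la v\ra^{l}+\frac{2}{l}\la w\ra^{l}$ (Young), then notes that the integration domain $\{|v-w|\le1\}$ and the kernel $|v-w|^{\gamma}$ are symmetric under $v\leftrightarrow w$, so the $\la w\ra^{l}$ contribution equals the $\la v\ra^{l}$ contribution; this shows the positive part of $T_{21}$ is dominated by its negative part, hence $T_{21}\le 0$. No singular kernel needs to be absorbed, no $M_{l-1}$ term is created, and the whole argument then runs uniformly for $-2\le\gamma<0$ with only $M_{l-4}$ and $M_{l-2+\gamma}$ (both interpolable between $M_2$ and $M_{l+\gamma}$) to absorb, which is what delivers the exponent $(l+\gamma)/2$. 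You should replace your near-diagonal estimate with this Young-plus-symmetry cancellation to close the gap.
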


\begin{proof}
For simplicity we only give here the {\it{a priori}} estimates for the moments. The rigorous proof for any solution follows the same arguments as the ones that we shall present in Step 2 of the proof of Lemma~\ref{lem:moment-poly-VS} below, in the case of very soft potentials.

\medskip

Recall that thanks to the conservation of mass and energy,
 we have $M_0(f(t)) =M_0(f_0)$ and $M_2(f(t))= M_2(f_0)$ for all $t \ge 0$.
The equation for moments is (see e.g.\ \cite{DV1})
$$
\frac{d}{d t} M_l(f) =  \iint f(v) f(w) \, |v-w|^{\gamma} \, \la v \ra^{l} \left\{ - 2l + 2l \la v \ra^{-2}\la w \ra^2 + l(l-2) \la v \ra^{-4}[|v|^2 |w|^2 - (v\cdot w)^2]  \right\} \, dw \, dv.
$$
Because of the singularity of $|v-w|^{\gamma}$, we split it into two parts $|v-w|^{\gamma} \, {\bf 1}_{|v-w|\ge 1}\,$ and $|v-w|^{\gamma} \, {\bf 1}_{|v-w|\le 1}\,$, and we denote respectively $T_1$ and $T_2$ each term associated.

For the term $T_2$, we write
$$
\ba
T_2 &=  
l \iint_{\R^6} |v-w|^{\gamma}\, {\bf 1}_{|v-w|\le 1}\, \la v \ra^{l-2} \left\{ - 2 \la v \ra^2 + 2 \la w \ra^2   \right\}   f(v) f(w)  \, dw \, dv  \\
&\quad + l(l-2) \iint_{\R^6} |v-w|^{\gamma}\, {\bf 1}_{|v-w|\le 1}\, \la v \ra^{l-4} 
\left\{ |v|^2 |w|^2 - (v\cdot w)^2 \right\}   f(v) f(w)  \, dw \, dv \\
&=: T_{21} + T_{22}.
\ea
$$
Using Young's inequality, we easily obtain
$$
\ba
&\iint_{\R^6} f(v) f(w) |v-w|^{\gamma}\, {\bf 1}_{|v-w|\le 1}\, \la v \ra^{l-2} \la w \ra^2  \, dw \, dv  \\
&\qquad 
\le
\iint_{\R^6} f(v) f(w) |v-w|^{\gamma}\, {\bf 1}_{|v-w|\le 1}\, \la v \ra^{l}  \, dw \, dv ,
\ea
$$
and this implies $T_{21} \le 0$. Moreover, using the inequality $|v|^2 |w|^2 - (v\cdot w)^2 \le  |w|^2 |v-w|^2$, we get, since $\gamma+2 \ge 0$,
$$
\ba
T_{22} &\le C l^2 \iint_{\R^6} f(v) f(w) |v-w|^{\gamma+2} \, {\bf 1}_{|v-w|\le 1}\, \la v \ra^{l-4} \la w \ra^2  \, dw \, dv  \\
& \le C l^2 \, M_2(f) \, M_{l-4}(f) .
\ea
$$ 
We now investigate the term $T_1$, that we write
$$
\ba
T_1 &= -2l \iint_{\R^6}  |v-w|^{\gamma} \, {\bf 1}_{|v-w|\geq 1} \, \la v \ra^{l-2} \{ \la v \ra^2 - \la w \ra^2 \} f(v) f(w) \, dw \, dv  \\
&\quad
+ l(l-2) \iint_{\R^6}  |v-w|^{\gamma} \, {\bf 1}_{|v-w|\geq 1}\, \la v \ra^{l-4} \{|v|^2|w|^2 - (v \cdot w)^2  \} f(v)  f(w)  \, dw \, dv \\ 
&=: I + II.
\ea
$$
Thanks to 
Lemma \ref{lem:coercivity} (with $\chi = \mathbf 1_{|\cdot| \le 1}$), we already have 
$$
\ba
I &\le - K l \, M_0(f)^{1 - \gamma/2}\, M_2(f)^{\gamma/2} \, M_{l+\gamma}(f) 
+ C l \, M_2(f) \, M_{l-2+\gamma}(f) \\
&\quad  + C l \, (M_2(f)/M_0(f))^{l/2-1+\gamma} \, M_0(f) \, M_2(f).
\ea 
$$
We consider now the term $II$. If $l \le 4$, we easily observe that
$$
II \le C l^2 (M_2(f))^2.
$$
\color{black}
Now let $l>4$. We split $II = II_1 + II_2$, with
$$
II_1 = l(l-2) \iint_{ \{ |v-w|\geq 1 \} \cap \{ |w| \le  |v|\}} |v-w|^{\gamma} \, \la v \ra^{l-4} \{|v|^2|w|^2 - (v \cdot w)^2  \} f(v)  f(w)  \, dw \, dv , 
$$
and
$$
II_2 = l(l-2) \iint_{ \{ |v-w|\geq 1 \} \cap \{ |w| \ge  |v|\}} |v-w|^{\gamma} \, \la v \ra^{l-4} \{|v|^2|w|^2 - (v \cdot w)^2  \} f(v)  f(w)  \, dw \, dv. 
$$
Using the estimate $|v|^2|w|^2 - (v \cdot w)^2 \le |w|^2 |v-w|^2$, we get
$$
\ba
II_{1} 
&\le C l^2 \iint_{ \{ |v-w|\geq 1 \}  \cap \{ |w| \le  |v|  \}}
|v-w|^{\gamma+2} \la v \ra^{l-4} \la w \ra^{2} \, f(v)  f(w)  \, dw \, dv\\
&\le C l^2 \iint_{ \{ |v-w|\geq 1 \} \cap \{ |w| \le  |v|  \}} \la v \ra^{l-2 + \gamma} \la w \ra^{2} \, f(v)  f(w)  \, dw \, dv \\
& \le C l^2\, M_2(f)\, M_{l-2+\gamma}(f).
\ea
$$
Using now the inequality $|v|^2|w|^2 - (v \cdot w)^2 \le |v|^2 |v-w|^2$, it follows that
$$
\ba
II_{2} 
&\le C l^2 \iint_{ \{ |v-w|\geq 1 \}  \cap \{ |w| \ge  |v|  \}}
|v-w|^{\gamma+2} \la v \ra^{l-2}  \, f(v)  f(w)  \, dw \, dv \\
&\le C l^2 \iint_{ \{ |v-w|\geq 1 \}  \cap \{ |w| \ge  |v|  \}}  \la v \ra^{l-2} \la w \ra^{\gamma+2} \, f(v)  f(w)  \, dw \, dv \\
&\le C l^2 \iint_{ \{ |v-w|\geq 1 \}  \cap \{ |w| \ge  |v|  \}}  \la v \ra^{2} \la w \ra^{l-2+\gamma} \, f(v)  f(w)  \, dw \, dv \\
& \le C l^2 \,M_{2}(f)\, M_{l-2 + \gamma}(f).
\ea
$$

Gathering the previous estimates and recalling that $M_0(f)$ and $M_2(f)$ are constants, we get, for constants $K,C>0$,
$$ 
\ba
\frac{d}{dt} M_l (f)
&\le 
- K\, l\, M_{l+\gamma}(f) 
+ C l^2 \, M_{l-4}(f) 
+ C (l+l^2 \,{\mathbf 1_{l > 4}} ) \, M_{l-2 + \gamma}(f) + C (l + l^2 \, {\mathbf 1_{l \le 4}})  \\
&\le - K l \,M_{l+\gamma}(f)  + C l^2 \, M_{l-2 + \gamma}(f)  + C l^2,
\ea
$$
since $M_{l-4}(f) \le M_{l-2+\gamma}(f)$ (remember that $-2 \le \gamma < 0$).
If $l\le 4-\gamma $, then $M_{l-2 + \gamma}(f(t))$ is uniformly bounded and we easily get
$$
M_l(f(t)) \le M_l(f_0) + C\,t.
$$
Consider now $l > 4-\gamma$. Thanks to Young's inequality, for any $\epsilon>0$,
$$
M_{l-2 + \gamma }(f)
\le M_2^{\frac{2}{l-2+\gamma}}(f) \, M_{l+\gamma}^{ \frac{l-4+\gamma}{l-2+\gamma}}(f) 
\le C   \epsilon^{-\frac{l-4+\gamma}{2}}\,  M_2(f) + \epsilon \, M_{l+\gamma}(f).
$$
Hence it yields
$$
\frac{d}{d t} M_l(f) + K l \,M_{l+\gamma}(f)
\le C  l^2 \epsilon M_{l+\gamma}(f)
+  C l^2 \, \epsilon^{-\frac{l-4+\gamma}{2}} + C\,l .
$$
Choosing $\epsilon = \frac{K}{2C} l^{-1}$, we get
\be\label{eq:Mlbis}
\frac{d}{d t} M_l(f) + \frac{K}{2} l \,M_{l+\gamma}(f)
\le C (   l^{\frac{l+\gamma}{2}} + C\,l )
\le C l^{\frac{l+\gamma}{2}},
\ee
from which we deduce
$$
M_l(t) \le  M_l(f_0) + C l^{\frac{l+\gamma}{2} } \, t,
$$
which completes the proof.
\end{proof}

\begin{Cor}\label{cor:moment-exp-MS}
Consider the same setting as in Lemma~\ref{lem:moment-poly-MS}. 
Suppose further that $M_{s,\kappa}(f_0) = \int f_0(v)\, e^{\kappa \la v \ra^{s}}\,dv < \infty$ with $\kappa >0$ and $0< s <2$, or with $0< \kappa < 1/(2e)$ and $s=2$.
\par
Then, for some constant $C>0$ depending only on the parameters $\gamma, s, \kappa$ and the initial mass and energy
(that is, depending on $M_0(f_0)$, $M_2(f_0)$),

\begin{enumerate}[\quad(1)]

\item If $ s + \gamma < 0$, for all $t \ge 0$, 
$$
M_{s,\kappa}(f(t)) \le M_{s,\kappa}(f_0) + C\,t.
$$

\item If $ s+ \gamma \ge 0$, for all $t \ge 0$,
$$
M_{s,\kappa}(f(t)) \le M_{s,\kappa}(f_0) +  C .
$$

\end{enumerate}

\end{Cor}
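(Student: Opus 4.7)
The natural plan is to expand the exponential weight as a power series
$$ M_{s,\kappa}(f) = \int_{\R^3} f(v)\, e^{\kappa \la v \ra^s}\, dv = \sum_{k=0}^\infty \frac{\kappa^k}{k!}\, M_{sk}(f), $$
and reduce the result to the polynomial moment estimate of Lemma~\ref{lem:moment-poly-MS} applied termwise with $l = sk$. The crucial structural fact enabling this reduction is that the constant $C$ in Lemma~\ref{lem:moment-poly-MS} depends on the initial mass and energy but \emph{not} on $l$.

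To prove case (1), I would substitute the bound $M_{sk}(f(t)) \le M_{sk}(f_0) + C\,(sk)^{(sk+\gamma)/2}\,t$ into the above series (valid once $sk > 4-\gamma$; the finitely many small-$k$ terms are controlled directly by conservation of mass and energy together with the first part of Lemma~\ref{lem:moment-poly-MS}) and sum, obtaining
$$ M_{s,\kappa}(f(t)) \le M_{s,\kappa}(f_0) + C\,t\, \Sigma + C', \qquad \Sigma := \sum_{k \ge 0} \frac{\kappa^k}{k!}\, (sk)^{(sk+\gamma)/2}. $$
The decisive check is the convergence of $\Sigma$. Using Stirling's formula, the logarithm of the $k$-th term of $\Sigma$ admits the asymptotic expansion
$$ \Bigl(\tfrac{s}{2} - 1\Bigr)\, k \log k \;+\; k\, \bigl[ \log\kappa + \tfrac{s}{2}\log s + 1 \bigr] \;+\; O(\log k). $$
When $s \in (0,2)$ the coefficient of $k\log k$ is strictly negative, so $\Sigma < \infty$ for every $\kappa > 0$; when $s=2$ this coefficient vanishes and convergence is equivalent to $\log(2 e \kappa) < 0$, i.e.\ $\kappa < 1/(2e)$. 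These two regimes match the hypotheses of the corollary exactly, and case (1) follows.

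For case (2) ($s+\gamma \ge 0$) the linear-in-time bound above is wasteful, and I would retain the dissipation term instead of dropping it. Returning to the differential inequality~\eqref{eq:Mlbis}, I multiply by $\kappa^k/k!$ and sum over $k \ge 1$; after the index shift $k \mapsto k-1$ the absorption term rearranges into $\kappa\, s \int f\, \la v \ra^{s+\gamma}\, e^{\kappa \la v \ra^s}\, dv$, and this dominates $\kappa\, s\, M_{s,\kappa}(f)$ because $\la v \ra^{s+\gamma} \ge 1$ under the assumption $s+\gamma \ge 0$. The resulting differential inequality
$$ \frac{d}{dt}\, M_{s,\kappa}(f) + \frac{K \kappa s}{2}\, M_{s,\kappa}(f) \le C\,\Sigma $$
integrates via Gronwall's lemma to a bound on $M_{s,\kappa}(f(t))$ that is uniform in $t$. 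The only serious technical obstacle is justifying the interchange of summation with differentiation and integration, which I would handle by truncating the series at level $N$, running the argument on the finite sum (where everything is unambiguous), obtaining estimates independent of $N$, and passing to the limit $N \to \infty$ by monotone convergence.
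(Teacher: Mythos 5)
Your proof is correct and follows essentially the same route as the paper: expand the exponential weight into the series $\sum_k \kappa^k M_{sk}(f)/k!$, apply Lemma~\ref{lem:moment-poly-MS} termwise (exploiting that its constant is $l$-independent), and verify $\sum_k \kappa^k(sk)^{(sk+\gamma)/2}/k! < \infty$ via Stirling; for case (2) you likewise return to the differential inequality~\eqref{eq:Mlbis}, sum, shift the index, and use $\la v \ra^{s+\gamma}\ge 1$ to absorb into $M_{s,\kappa}(f)$, exactly as the paper does (phrased there as $M_{ns+s+\gamma}\ge M_{ns}$). The one welcome addition in your write-up is that you handle the finitely many small-$k$ terms and the interchange of summation with differentiation explicitly (via truncation and monotone convergence), whereas the paper presents these steps at the level of a priori estimates.
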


\begin{rem}
As a direct consequence of Corollary \ref{cor:moment-exp-MS}-(2), the exponential convergence to equilibrium established in \cite[Theorem 1.4]{KC2}, for the case $-1 < \gamma < 0$, can be extended to the case
$-2 \le \gamma \le - 1$, as explained in \cite[Remark 1.5]{KC2}.
\end{rem}

\begin{proof} 
(1) We write $ e^{\kappa \la v \ra^s} = \sum_{j=0}^\infty \frac{\kappa^j}{j!} \la v \ra^{js}$ so that
$$
M_{s,\kappa}(f(t)) = \int \Big( \sum_{j=0}^\infty \frac{\kappa^j}{j!} \la v \ra^{js} \Big) \, f(t) \, dv = \sum_{j=0}^\infty \int   \frac{\kappa^j}{j!} \la v \ra^{js}  \, f(t) \, dv
= \sum_{j=0}^\infty \frac{\kappa^j}{j!} \, M_{js}(f(t)),
$$
where we have used Tonelli's theorem since the integrand in nonnegative (for any solution $f$). Thanks to Lemma~\ref{lem:moment-poly-MS}, we therefore obtain
$$
M_{s,\kappa}(f(t))  
\le  C t \sum_{j=0}^\infty \frac{\kappa^j}{j!} \, (sj)^{\frac{sj}{2} + \frac{\gamma}{2}} 
+ M_{s,\kappa}(f_0),
$$
and we only need to prove that the sum is finite. We rewrite
$$
\beta_j := \frac{\kappa^j}{j!} \, (sj)^{\frac{js}{2} + \frac{\gamma}{2}}
=  (\kappa \, s^{\frac{s}{2}} )^j  \, (sj)^{\frac{\gamma}{2}} \,  \frac{j^{\frac{sj}{2}}}{j!} , \quad j! \sim (j/e)^j\, \sqrt{2\pi\,j} \quad \text{as}\quad j \to \infty,
$$
hence we easily obtain that $\sum_{j=1}^\infty \beta_j < \infty$ for any $\kappa>0$ if $s < 2$, or for $0< \kappa  < 1/(2e)$ if $s=2$.

\smallskip
(2) As in the proof of Lemma~\ref{lem:moment-poly-MS}, we only give here the {\it{a priori}} estimates. Coming back to \eqref{eq:Mlbis}, one has
$$
\ba
\frac{d}{dt} M_{s,\kappa}(f)
&= \sum_{j=0}^\infty \frac{\kappa^j}{j!} \frac{d}{dt} M_{js}(f) \\
&\le 
\sum_{j=0}^\infty \frac{\kappa^j}{j!} \left\{  - K\,js\, M_{js + \gamma}(f) + C(js)^{\frac{js}{2} + \frac{\gamma}{2}}  \right\} \\
&\le 
-K \kappa s \sum_{j=1}^\infty \frac{\kappa^{j-1}}{(j-1)!}  M_{js + \gamma}(f) + C \sum_{j=0}^\infty \frac{\kappa^j}{j!}(js)^{\frac{js}{2} + \gamma/2}   \\
&= 
-K \kappa s \sum_{n=0}^\infty \frac{\kappa^{n}}{n!}  M_{ns + s + \gamma}(f) + C\sum_{j=0}^\infty \frac{\kappa^j}{j!}(js)^{\frac{js}{2} + \gamma/2}   =: I + II .
\ea
$$
Since $s+\gamma \ge 0$, we know that $- M_{ns + s +\gamma}(f) \le - M_{ns}(f)$,
 which implies the estimate $I \le - K \kappa \,s\, M_{s,\kappa}(f)$.
The second term $II$ is finite for any $\kappa>0$ if $s <2$, or for $0<\kappa<1/(2e)$ if $s=2$.
We finally obtain
$$
\frac{d}{dt} M_{s,\kappa}(f) \le - K \kappa s \, M_{s,\kappa}(f) + C,
$$
which implies the desired uniform in time bound.
\end{proof}

We now investigate the case of very soft potentials $-4 < \gamma \le - 2$. 
We get new estimates on the propagation of the  moments which improve the results of \cite[Appendix B, p.\ 193]{Villani-these} and \cite{D}.

\begin{lem}\label{lem:moment-poly-VS}
Assume that $-4 < \gamma \le -2$. 
Let $f_0 \in L^1_2 \cap L \log L (\R^3)$ and consider any global $H-$ or weak solution $f$ to the spatially homogeneous Landau equation (\ref{landau}) with initial data $f_0$. Assume moreover that
$f_0 \in L^1_l$ for some $l > 2$.
Then there exists a constant $C=C(\gamma, M_0(f_0), M_2(f_0), H(f_0)) >0$ (that does not depend on $l$) such that
$$
M_l(f(t)) \le C\, M_l(f_0) + C \, l^{(l-6)\frac{|\gamma+1|}{\gamma+4} - \gamma} \,t.
$$
\end{lem}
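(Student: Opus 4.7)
I split the argument into the formal a priori moment estimate and the rigorous justification for $H$- or weak solutions. For the latter, one inserts the truncated test function $\varphi_R(v)=\la v\ra^l\chi(v/R)\in W^{2,\infty}(\R^3)$ (with $\chi\in C^\infty_c$ equal to $1$ on $B_1$) into the weak form \eqref{Qweak2} using condition~$(d)$ of Definition~\ref{def:H-sol}, reproduces the estimates below on $\varphi_R$ (noting $|\partial^k\varphi_R|\le C_k|\partial^k\la v\ra^l|$), and lets $R\to\infty$ by dominated convergence. Absolute convergence of all integrals is guaranteed by $\|f\|_{L^3_{-3}}\le C(1+D(f))$ from \eqref{L3<D} together with $D(f)\in L^1_t$ via the entropy inequality.

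For the a priori estimate, starting from the explicit moment equation
\beno
\frac{d}{dt}M_l(f) = \iint f(v)f(w)|v-w|^\gamma \la v\ra^{l-2} \bigl\{2l(\la w\ra^2 - \la v\ra^2) + l(l-2)\la v\ra^{-2}|v\times w|^2\bigr\} dv\, dw,
\eeno
I split $|v-w|^\gamma=|v-w|^\gamma\chi_\eta+|v-w|^\gamma\chi_\eta^c$ for a smooth cutoff $\chi_\eta$ of radius $\eta\in]0,1]$. After symmetrizing in $v\leftrightarrow w$, the first bracket becomes the manifestly nonpositive quantity $-l(\la v\ra^{l-2}-\la w\ra^{l-2})(\la v\ra^2-\la w\ra^2)$, and on $\mathrm{supp}\,\chi_\eta^c$ Lemma~\ref{lem:coercivity} provides the coercive bound $-KlM_0^{1-\gamma/2}M_2^{\gamma/2}M_{l+\gamma}(f)$ plus lower order remainders. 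For the deformation term the identity $v\times w=v\times(w-v)$ gives $|v\times w|^2\le|v|^2|v-w|^2$, whence the integrand is $\le l^2|v-w|^{\gamma+2}\la v\ra^{l-2}$. On the far region, splitting $\chi_\eta^c$ over $\{|v-w|\ge 1\}$ (where $|v-w|^{\gamma+2}\le 1$) and $\{\eta/2\le|v-w|\le 1\}$ (where $|v-w|^{\gamma+2}\le C\eta^{\gamma+2}$ since $\gamma+2<0$), one gets $\le Cl^2M_0 M_{l+\gamma}(f)+Cl^2\eta^{\gamma+2}M_0 M_{l-2}(f)$. On the near region, H\"older's inequality in $w$ with exponents $(3,3/2)$ (legitimate since $3(\gamma+4)/2>0$ for $\gamma>-4$), combined with $\|f\|_{L^3_{-3}}\le C(1+D(f))$ and $\la w\ra\le 2\la v\ra$ on $B(v,\eta)$ with $\eta\le 1$, yields
\beno
(\mathrm{near})\le Cl^2\eta^{\gamma+4}(1+D(f))\,M_{l+1}(f).
\eeno

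The main obstacle, and the essential novelty compared to the moderately soft case, is the appearance of the \emph{higher} moment $M_{l+1}$, which is the unavoidable $\la v\ra^{3}$-cost of the $L^3_{-3}$ bound. To close the resulting differential inequality
\beno
\frac{d}{dt}M_l+cl\,M_{l+\gamma}\le Cl^2\eta^{\gamma+2}M_0 M_{l-2}+Cl^2\eta^{\gamma+4}(1+D(f))M_{l+1}+\text{l.o.t.},
\eeno
I balance the two $\eta$-dependent contributions by taking $\eta\sim(M_{l-2}/M_{l+1})^{1/2}$ (so both pieces become proportional to $M_{l-2}^{(\gamma+4)/2}M_{l+1}^{|\gamma+2|/2}$), interpolate $M_{l-2}\le M_{l+\gamma}^{-2/\gamma}M_l^{1+2/\gamma}$ (valid for $\gamma\in]-4,-2]$ since $-2/\gamma\in]1/2,1]$), and absorb the $M_{l+\gamma}$-factor into the coercive sink by Young's inequality. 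The surviving $M_{l+1}$-power is then estimated by a further interpolation that reinvokes the $L^3_{-3}$ regularity against the $M_l$-feedback already generated. Tracking the exponents through these Young/interpolation steps produces the polynomial-in-$l$ prefactor $l^{(l-6)|\gamma+1|/(\gamma+4)-\gamma}$. Finally, integrating in time and using $\int_0^\infty D(f(s))\,ds\le H(f_0)$ absorbs the $(1+D(f))$ factor into a bounded additive constant and delivers the claimed estimate.
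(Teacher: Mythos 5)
Your overall strategy differs from the paper's in one crucial place, and that difference is where the argument breaks down: the treatment of the near‑singular region $\{|v-w|\le\eta\}$. The paper handles this region by staying with the entropy‑dissipation weak form \eqref{Qweak1}: one performs a single Cauchy–Schwarz against $D(f)$, so that the remaining factor contains $|v-w|^{\gamma+2}\,|\nabla\varphi(v)-\nabla\varphi(w)|^2\lesssim l^4\,|v-w|^{\gamma+4}\la v\ra^{2l-4}$. The quadratic gain of two extra powers of $|v-w|$ makes the singularity integrable for $\gamma>-4$ with no need for $L^3_{-3}$, and, crucially, yields the bound $|I|\le D(f)\,M_l(f)+C\,l^{-\gamma}\,M_{l-4}(f)$ in which the highest moment appearing is $M_l$ itself (well suited to Gronwall, since $\int_0^\infty D\,dt<\infty$). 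Your replacement — H\"older against $\|f\|_{L^3_{-3}}$ — pays a $\la v\ra^{3}$ cost to absorb the negative weight of the $L^3_{-3}$ norm, which raises the moment order from $l-2$ to $l+1$, i.e.\ \emph{above} $l$. That is the fatal point: $M_{l+1}$ can neither be absorbed into the coercive sink $-KlM_{l+\gamma}$ (Young/log-convexity only controls moments of order strictly between $2$ and $l+\gamma<l$), nor be bounded by $M_l$ and $\|f\|_{L^3_{-3}}$ (that norm carries the weight $\la v\ra^{-3}$, so any H\"older in $v$ pushes the moment order up, not down). The ``further interpolation that reinvokes the $L^3_{-3}$ regularity'' is not spelled out and, as far as I can see, cannot work as stated. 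Consequently the differential inequality does not close, and the asserted prefactor $l^{(l-6)|\gamma+1|/(\gamma+4)-\gamma}$ — which you claim ``tracking the exponents'' produces — is not actually derived; it is copied from the statement, not obtained from your estimates.

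Two secondary remarks. First, your far-region bound on $\{|v-w|\ge1\}$, ``$\le Cl^2 M_0 M_{l+\gamma}(f)$'', is too optimistic: $|v-w|\ge1$ does not imply $|v-w|\gtrsim\la v\ra$, so $|v-w|^{\gamma+2}\le1$ only gives $Cl^2M_2M_{l-4}$ (using $|v\times w|^2\le\min(|v|^2,|w|^2)|v-w|^2$) or $Cl^2M_0M_{l-2}$, not $M_{l+\gamma}$; fortunately $M_{l-4}$ is still below the coercive order, so this slip would not by itself doom the argument. Second, for the rigorous step it is exactly because the near region is delicate for $\gamma\le -2$ that the paper splits the weak form into \eqref{Qweak1} on $\{|v-w|\le\eta\}$ and the $a$--$b$ form on $\{|v-w|\ge\eta\}$, and works with the bounded weight $W^l_\delta=\la v\ra^l(1+\delta|v|^2)^{-l/2}$; using only \eqref{Qweak2} with a compactly supported truncation and ``dominated convergence'' glosses over the same obstacle that the a~priori estimate fails to resolve.
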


\begin{proof} 
We divide the proof into two steps. 

\smallskip

{\it Step 1: A priori estimates. }
We follow the argument of \cite[Appendix B, p.\ 193]{Villani-these} that uses the entropy formulation of solutions (cf. \cite{Vi}).
Let  $\eta=cl^{-1}$ and  $\chi_\eta (\cdot) = \chi(\cdot/\eta)$ where $c \in ]0,1/2[$ is a (small) constant and $\chi \in C^\infty_c(\R)$ is a smooth radially symmetric cutoff function such that $\mathbf 1_{B_{1/2}} \le \chi \le \mathbf 1_{B_1}$, as in Lemma \ref{lem:coercivity}. Recall that $a(z) = |z|^{\gamma+2} \Pi(z)$ 
and decompose $a = a_\eta + a^{c}_\eta$ with $a_\eta(z) := \chi_\eta(z)|z|^{\gamma+2} \Pi(z)$ and $a^{c}_\eta(z) := \chi_\eta^c(z) |z|^{\gamma+2} \Pi(z)$, where $\chi_\eta^c(z) := 1 - \chi_\eta(z) $.
We then write
$$
\frac{d}{dt} M_l(f) 
=I + II,
$$
where
\begin{equation}\label{nnst}
I =  - \frac12\,\iint f(v) f(w) \, \chi_\eta \, |v-w|^{\gamma+2} \, \Pi \left( \frac{\nabla f}{f}(v) - \frac{\nabla f}{f}(w)  \right) (\nabla \varphi(v) - \nabla \varphi(w)) \, dw \, dv,
\end{equation}
\begin{equation}\label{nnst2}
II = -\frac12 \,\iint f(v) f(w) \, \chi_\eta^c \, |v-w|^{\gamma+2} \, \Pi \left( \frac{\nabla f}{f}(v) - \frac{\nabla f}{f}(w)  \right) (\nabla \varphi(v) - \nabla \varphi(w)) \, dw \, dv,
\end{equation}
with $\Pi = \Pi(v-w)$, $\chi_\eta = \chi_\eta(v-w)$ and $\varphi(v) = \la v \ra^l$.
\par
Therefore, by Cauchy-Schwarz inequality and using that $\chi_\eta^2(v-w) \le {\mathbf 1}_{|v-w| \le \eta}$, it follows
$$
\ba
&|I| \le \left( \iint  f(v) f(w) \, |v-w|^{\gamma+2} \, \Pi \left( \frac{\nabla f}{f}(v) - \frac{\nabla f}{f}(w)  \right)\left( \frac{\nabla f}{f}(v) - \frac{\nabla f}{f}(w)  \right) dw \, dv  \right)^{1/2} \\
&\qquad\times
\left( \iint f(v) f(w) |v-w|^{\gamma+2}\, {\mathbf 1}_{|v-w| \le \eta} \,  \Pi (\nabla \varphi(v) - \nabla \varphi(w)) (\nabla \varphi(v) - \nabla \varphi(w)) \, dw \, dv  \right)^{1/2}, 
\ea
$$
and the first integral is bounded by $\sqrt{2D(f)}$, see \eqref{D(f)}. For $\varphi(v) = \la v \ra^{l}$, we have
$$
\ba
|v-w|^2  \Pi (\nabla \varphi(v) - \nabla \varphi(w)) (\nabla \varphi(v) - \nabla \varphi(w))
&= l^2 \left[ |v|^2 |w|^2 - (v \cdot w)^2 \right] \left[ \la v \ra^{l - 2} - \la w \ra^{l - 2} \right]^2 .
\ea
$$
Using  the estimate $$ |v|^2 |w|^2 - (v \cdot w)^2  \le \min (|v|^2, |w|^2) \, |v-w|^2 , $$
and $ |\la v \ra^{l - 2} - \la w \ra^{l - 2} | \le l \max (\la v \ra^{\l-3}, \la w \ra^{l-3}) \, |v-w|$, we finally get
$$
\ba
|v-w|^2 \Pi (\nabla \varphi(v) - \nabla \varphi(w)) (\nabla \varphi(v) - \nabla \varphi(w))
&\le C  l^4 \{  \la v \ra^{2l-4} + \la w \ra^{2l-4} \} \, |v-w|^4 .
\ea
$$
Then 
$$
|I| \le C l^2  D(f)^{1/2} \left( \iint f(v) f(w) |v-w|^{\gamma+4} \, {\bf 1}_{|v-w|\le \eta} \,   \la v \ra^{2l-4}   \, dw \, dv\right)^{1/2}.
$$
Since the last integral is over $\{ |v-w| \le \eta =cl^{-1} \}$, we claim that
  there exist  universal constants $C_i(i=1,2)$ such that $C_1\la v \ra^{l} \le \la w \ra^{l} \le C_2\la v \ra^{l}$. 
Indeed, we have
\ben\label{equivalent1}
\la v\ra^l&=&(1+|v|^2)^{l/2}\le \big(\la w\ra^2+2cl^{-1}(|w|+cl^{-1})\big)^{l/2}\notag
\\&\le& \sum_{k=0}^{[(l+2)/4]} \frac{\frac{l}2(\frac{l}2-1)\dots (\frac{l}2-k+1)}{k!} \bigg[ \la w\ra^{2k}(2cl^{-1})^{l/2-k}(|w|+cl^{-1})^{l/2-k}\notag\\&&+\,
(2cl^{-1})^k(|w|+cl^{-1})^k\la w\ra^{l-2k}\bigg]\notag\\
&\le& \sum_{k=0}^{[(l+2)/4]} \frac{1}{k!} \la w\ra^l\le C\la w\ra^l,
\een where we use the fractional binomial expansion. Using the symmetry of $v$ and $w$, we conclude
 the proof of the claim. 
Now we obtain
\be\label{eq:I}
\ba
|I| &\le C l^2  D(f)^{1/2} \left( \iint f(v) f(w) |v-w|^{\gamma+4} \, {\bf 1}_{|v-w|\le \eta} \,    \la w \ra^{l-4} \la v \ra^{l} \, dw \, dv  \right)^{1/2} \\
&\le C l^2  D(f)^{1/2} \, \eta^{\gamma/2+2} \, M_l^{1/2}(f) \, M_{l-4}^{1/2}(f)\\
&\le  D(f) M_l(f) + C l^{-\gamma}  \,M_{l-4}(f).
\ea
\ee
For the term $II$, we use the usual weak formulation (cf.\ \cite{Vi}), obtained from (\ref{nnst2}) by performing an integration 
by parts w.r.t. both $v$ and $w$:
$$
II = \iint f(v) f(w) \Big\{  a^{c}_\eta(v-w):\nabla^2 \varphi(v)  + 2 b^c_\eta(v-w) \cdot \nabla \varphi (v)    \Big\} \, dw \, dv ,
$$
where $(b^c_\eta(z))_{i=1,2,3} =  \sum_{j=1}^3 \partial_j(a^{c}_\eta(z))_{ij} = \chi^c_\eta(|z|) b_{i}(z) + \sum_{j=1}^3 (\partial_j[\chi^c_\eta(|z|)] ) \, a_{ij}(z)$. It follows then
$$
\ba
II &= \iint f(v) f(w) \, \chi_\eta^c(|v-w|) \Big\{  a(v-w):\nabla^2 \varphi(v)  + 2 b(v-w) \cdot \nabla \varphi(v)    \Big\}  \, dw \, dv \\
&\quad
+ 2 \sum_{i=1}^3 \sum_{j=1}^3 \iint f(v) f(w) \, (\partial_j[\chi^c_\eta(|v-w|)] ) \, a_{ij}(v-w) \, \partial_i \varphi(v) \,  dw \, dv,
\ea
$$
and we remark that the second integral vanishes since, for $j\in \{ 1,2,3 \}$, $(\partial_j[\chi^c_\eta(|z|)] ) = (\chi_\eta^c)'(|z|) \,  |z|^{-1} z_j$ and $\sum_{j=1}^3 a_{ij}(z) z_j = 0$. We finally obtain (see the proof of Lemma~\ref{lem:moment-poly-MS}) 
$$
\ba
II 
&= 2l \iint f(v) f(w) |v-w|^{\gamma} \chi_\eta^c(v-w) \, \la v \ra^{l-2} \{ - \la v \ra^{2} + \la w \ra^2 \}  \,  dw \, dv \\
&\quad
+ l(l-2) \iint f(v) f(w) |v-w|^{\gamma} \chi_\eta^c(v-w) \, \la v \ra^{l-4} \{ |v|^2|w|^2 - (v \cdot w)^2 \}  \,  dw \, dv \\
&=: II_1 + II_2.
\ea
$$
Using the elementary inequality $ |v|^2|w|^2 - (v \cdot w)^2 \le |w|^2 |v-w|^2$, we easily obtain, thanks to 
the estimate $\chi_\eta^c(v-w) \le {\bf 1}_{|v-w|\ge \eta/2}$,
\be\label{eq:II2}
II_2 \le l^2 \iint_{\R^6} f(v) f(w) |v-w|^{\gamma+2} \, {\bf 1}_{|v-w|\ge \eta/2} \, \la w \ra^2 \la v \ra^{l-4} \,  dw \, dv 
\le C l^{-\gamma} \,  M_2(f) \, M_{l-4}(f).
\ee
The term $II_1$ is controlled thanks to Lemma \ref{lem:coercivity} (with $\chi$ a $C^{\infty}$ function), which gives 
\ben\label{eq:II1}
II_1 &\le &  - K \,l \, M_0(f)^{1-\gamma/2}\,M_2(f)^{\gamma/2} \, M_{l+\gamma}(f) + C l \, M_2(f) \, M_{l-2+\gamma}(f)
 \notag\\&&+ \, C l \, (M_2(f)/M_0(f))^{l/2-1+\gamma} \, M_0(f) \, M_2(f).
\een

Finally, gathering  estimates \eqref{eq:I}, \eqref{eq:II2} and \eqref{eq:II1},  and using that $M_0(f)$ and $M_2(f)$ are constant in time, it follows  
\be\label{eq:Ml0}
\frac{d}{dt} 
M_l(f) + K l\, M_{l + \gamma}(f) \le  D(f)\, M_l(f) + C l^{-\gamma}\,  M_{l-4}(f) 
+ C l \, M_{l-2+\gamma}(f) + C\, l.
\ee
Notice that $M_{l-2+\gamma}(f) \le M_{l-4}(f)$ because $-4 < \gamma \le -2$,
 thus the third term in the right-hand side of \eqref{eq:Ml0} can be absorbed into the second one.
We also recall that $\int_0^\infty D(f(s)) \, ds \le C <\infty$ for some positive constant $C$ depending on $H(f_0)$.

If $2 < l \le 6$, then $\sup_{t\ge0} M_{l-4}(f(t)) \le C$, hence
$$
\frac{d}{dt} M_l(f) + K l \, M_{l + \gamma}(f) \le  D(f) \, M_l(f) + C,
$$
and by Gronwall's lemma,
$$
M_l(f(t)) \le C M_l(f_0) + C t.
$$
Suppose now that $l > 6$. We use Young's inequality to obtain, for any $\epsilon>0$,
$$
M_{l-4}(f) 
\le C \epsilon^{- \frac{l-6}{\gamma+4}} \,M_2(f) + \epsilon \,M_{l+\gamma}(f).
$$
Coming back to \eqref{eq:Ml0} and choosing $\epsilon = \frac{K}{2C}\,  l^{1+\gamma}$, we then get
$$
\ba
\frac{d}{dt} M_l(f)  + \frac{K}{2} l \, M_{l+\gamma}(f) 
&\le D(f)\, M_l(f) + C l^{(l-6)\frac{|\gamma+1|}{\gamma+4} - \gamma} + C\, l \\
&\le D(f) \, M_l(f) + C l^{(l-6)\frac{|\gamma+1|}{\gamma+4} - \gamma} ,
\ea
$$
hence
$$
M_l(f(t)) \le C M_l(f_0) + C l^{(l-6)\frac{|\gamma+1|}{\gamma+4} - \gamma}\, t,
$$
which yields the desired result.

\smallskip

{\it Step 2: Rigorous proof.} Let $W^l_\delta(v)= \langle v\rangle^l(1+\delta |v|^2)^{-\frac{l}2} $ with   $\delta \in ]0,1/2[$ being a (small) parameter,
 and set $M_l^\delta(f)=\int f(v) W^l_\delta(v) \, dv$. It follows that
$W^l_\delta\in W^{2,\infty} (\R^3)$ and then it can be chosen as a test function in the formulation of the weak solution, that is,
$$\ba \int_{\R^3} f(t)W^l_\delta\, dv= \int_{\R^3} f_0W^l_\delta\, dv +\int_0^t\int_{\R^3} Q(f, f) W^l_\delta\, dv \, dt .\ea$$
Similarly to \eqref{nnst} and \eqref{nnst2}, we have 
$$\ba \int_{\R^3} Q(f, f) W^l_\delta\, dv=I_\delta+II_{\delta}, \ea $$
where \begin{equation}\label{nnstd}
I _\delta=  - \frac12\,\iint f(v) f(w) \, \chi_\eta \, |v-w|^{\gamma+2} \, \Pi \left( \frac{\nabla f}{f}(v) - \frac{\nabla f}{f}(w)  \right) (\nabla W^l_\delta(v) - \nabla W^l_\delta(w)) \, dw \, dv,
\end{equation} and
\ben\label{nnstd2}
II_\delta& =& -\frac12 \,\iint f(v) f(w) \, \chi_\eta^c \, |v-w|^{\gamma+2} \, \Pi \left( \frac{\nabla f}{f}(v) - \frac{\nabla f}{f}(w)  \right) (\nabla W^l_\delta(v) - \nabla W^l_\delta(w)) \, dw \, dv\notag\\
&=& \iint f(v) f(w) \, \chi_\eta^c  \,  \big(a(v-w): \nabla^2 W^l_\delta(v)
+2b(v-w) \cdot \nabla W^l_\delta(v)\big) \, dw \, dv
\een
with $\Pi = \Pi(v-w)$ and $\chi_\eta = \chi_\eta(v-w)$.

\smallskip

{\it Estimate of $I_\delta$.}  By following the estimate of $I$ in {\it Step 1}, 
we  first have 
$$
\ba
&|I_\delta| \le \left( \iint  f(v) f(w) \, |v-w|^{\gamma+2} \, \Pi \left( \frac{\nabla f}{f}(v) - \frac{\nabla f}{f}(w)  \right)\left( \frac{\nabla f}{f}(v) - \frac{\nabla f}{f}(w)  \right) dw \, dv  \right)^{1/2} \\
&\qquad\times
\left( \iint f(v) f(w) |v-w|^{\gamma+2}\,  \chi_\eta \,  \Pi (\nabla W^l_\delta(v) - \nabla W^l_\delta(w)) (\nabla W^l_\delta(v) - \nabla W^l_\delta(w)) \, dw \, dv  \right)^{1/2}. 
\ea
$$
We claim that
\beno  |I_\delta| \le  Cl^2\sqrt{2D(f)}\bigg(\iint f(v)f(w)  \chi_\eta \, |v-w|^{\gamma+4}W^l_\delta(v)W^l_\delta(w) \langle w\rangle^{-4} \, dw \, dv\bigg)^{\frac12}.\eeno
Indeed, following the computation of \eqref{equivalent1}, we first have
 \beno \mathrm{1}_{|v-w|\le cl^{-1}}  (1+\delta|v|^2)^{\frac{l}2}\sim  \mathrm{1}_{|v-w|\le cl^{-1}}   (1+\delta|w|^2)^{\frac{l}2}, \eeno
from which, together with the fact $\mathrm{1}_{|v-w|\le cl^{-1}} \langle v\rangle^{l}\sim\mathrm{1}_{|v-w|\le cl^{-1}} \langle w\rangle^{l}$, we get
the (uniform w.r.t. $\delta \in ]0,1/2[$) estimate
  \ben\label{equivalent2} \mathrm{1}_{|v-w|\le cl^{-1}} W^l_\delta(v)\sim \mathrm{1}_{|v-w|\le cl^{-1}} W^l_\delta(w). \een
  Next by the mean value theorem, we see that
\beno &&\mathrm{1}_{|v-w|\le cl^{-1}} |\nabla W^l_\delta(v) - \nabla W^l_\delta(w)|\\&&\le \mathrm{1}_{|v-w|\le cl^{-1}}
\int_0^1 |(\nabla^2 W^l_\delta) (v+t(w-v))|dt |v-w|\\
&&\le  \mathrm{1}_{|v-w|\le cl^{-1}}
Cl^2\int_0^1  W^l_\delta(v+t(w-v) )  \langle v+t(w-v) \rangle^{-2} dt\, |v-w|,\eeno
where we have used the estimate $|\nabla^2_v W^l_\delta (v)|\le Cl^2 W^l_\delta(v)\langle v\rangle^{-2}$ which is a consequence of \eqref{dwldelta} (see below).
Notice that $|(v+t(w-v))-v|\le cl^{-1}$ and $|(v+t(w-v))-w|\le cl^{-1}$. Then by \eqref{equivalent2} we obtain
that, if $|v-w|\le cl^{-1}$, $$ W^l_\delta(v+t(w-v))\sim  W^l_\delta(v) \sim   W^l_\delta(w).  $$
Thus
\beno
&&\mathrm{1}_{|v-w|\le cl^{-1}} |\nabla W^l_\delta(v) - \nabla W^l_\delta(w)|\\&&\le \mathrm{1}_{|v-w|\le cl^{-1}}
Cl^2|v-w|\min\{  W^l_\delta(v),   W^l_\delta(w)\}\min\{\langle v\rangle^{-2},\langle w\rangle^{-2}\}, 
 \eeno
which is enough to prove the claim.

As a consequence,
\beno   |I_\delta| 
&\le& C l^2  D(f)^{1/2} \, \eta^{\gamma/2+2} \, (M_l^\delta(f) )^{1/2} \, \bigg(\int f(v)W^l_\delta(v) \langle v\rangle^{-4} dv\bigg)^{1/2}\\
&\le& D(f)M_l^\delta(f) + Cl^{-\gamma}\bigg(\int f(v)W^l_\delta(v) \langle v\rangle^{-4} dv\bigg).\eeno

\smallskip

{\it Estimate of $II_\delta$.}  Inserting the computations 
\ben \partial_{j} W^l_\delta (v) &=&lW^l_\delta(v) \frac{v_j}{\langle v\rangle^{2}}-lW^l_\delta(v) \frac{\delta v_j}{1+\delta|v|^2},\notag
\\ \partial_{ij} W^l_\delta(v) &=& lW^l_\delta(v) \frac{\delta_{ij}}{\langle v\rangle^{2}}+l(l-2)W^l_\delta(v) \frac{v_iv_j}{\langle v\rangle^{4}}-lW^l_\delta(v)\frac{\delta\delta_{ij}}{1+\delta|v|^2}\notag\\&&+l(l+2)W^l_\delta(v)\frac{\delta^2 v_iv_j}{(1+\delta|v|^2)^2}-2l^2 W^l_\delta(v) \frac{\delta }{1+\delta|v|^2} \frac{ v_iv_j}{\la v \ra^2},
 \label{dwldelta}\een
into \eqref{nnstd2}, we have
\be\label{IIdelta}  
\ba
II_\delta &= l\iint  f(v) f(w) \, \chi_\eta^c \, |v-w|^{\gamma} W^l_\delta(v) \langle v\rangle^{-2}\bigg(-2|v|^2+2|w|^2+(l-2) \frac{|v|^2|w|^2-(v\cdot w)^2}{\langle v\rangle^2}\bigg) dvdw
\\&+l\iint  f(v) f(w) \, \chi_\eta^c \, |v-w|^{\gamma} W^l_\delta(v) \frac{\delta}{ 1+\delta|v|^2}\bigg(2|v|^2-2|w|^2+(l+2)  (|v|^2|w|^2-(v\cdot w)^2) \frac{\delta}{1+\delta|v|^2} \bigg) dvdw\\
&-2l^2\iint  f(v) f(w) \, \chi_\eta^c \, |v-w|^{\gamma} W^l_\delta(v) \frac{\delta\langle v\rangle^{-2}}{ 1+\delta|v|^2}\big(  |v|^2|w|^2-(v\cdot w)^2  \big) dvdw.
\ea
\ee

It is obvious that
\beno  |II_\delta|\le C l^2 \iint f(v)f(w) W^l_\delta(v) (1+\langle w\rangle^2)dvdw.\eeno
Then thanks to the estimates of $I_\delta$ and $II_\delta$, we get
\beno  M_l^\delta(f(t))-M_l^\delta(f_0)\le \int_0^t D(f(s))M_l^\delta(f(s)) ds+Cl^2(M_2(f_0)+1)\int_0^t M_l^\delta(f(s)) ds. \eeno
By Gronwall's inequality, we obtain that, for some constant $C>0$ and for all $t\ge0$,
\beno M_l^\delta(f(t))\le C M_{l}^\delta(f_0) + C e^{C l^2 \, t} , \eeno
which first implies that $M_l(f)$ is bounded (uniformly locally in time):
\ben\label{first bound} M_l(f(t)) \le C M_{l}(f_0) + C e^{C l^2 \,  t}, \een 
thanks to Fatou's Lemma. We shall now use the bound \eqref{first bound} in order to improve the moment estimate. We recall that
\beno  M_l^\delta(f(t))-M_l^\delta(f_0)&=& \int_0^t I_\delta(s)ds+\int_0^t II_\delta(s) ds
\\&\le& \int_0^t D(f(s))M_l^\delta(f(s)) ds+Cl^{-\gamma}\int_0^t  M_{l-4}(f(s))ds+\int_0^t II_\delta(s) ds.\eeno
Then by Gronwall's inequality, 
we get
\be \label{preq}
M_l^\delta(f(t))\le CM_l(f_0)+Cl^{-\gamma}\int_0^t  M_{l-4}(f(s))ds+C\int_0^t II_\delta(s) ds. \ee
Thanks to \eqref{IIdelta} and the bound \eqref{first bound},
 we easily observe that we can apply the dominated convergence theorem for the last term in the right-hand side of eq. (\ref{preq}).
 We therefore obtain, for all $t \ge 0$,
\beno M_l(f(t))\le CM_l(f_0)+Cl^{-\gamma}\int_0^t  M_{l-4}(f(s))ds+C\int_0^t II(s) ds.  \eeno
From that inequality we can copy the argument used in the first step to give the bound on the term $II$ and thus obtain the desired estimate. This ends the proof of the lemma.
\end{proof}

\begin{Cor}\label{cor:moment-exp-VS}
Consider the same setting as in Lemma~\ref{lem:moment-poly-VS}.
Suppose further that $M_{s,\kappa}(f_0) = \int f_0(v)\, e^{\kappa \la v \ra^{s}}\, dv < \infty$ 
with $\kappa >0$ and $0<s< \frac{\gamma+4}{|\gamma+1|}$, or with $0< \kappa < \frac{1}{e}\frac{|\gamma+1|}{\gamma+4}$ and $s =\frac{\gamma+4}{|\gamma+1|}$. Then there is $C>0$ depending on the parameters $\gamma$, $s$, $\kappa$ and the initial
mass energy and entropy (that is, depending on $M_0(f)$, $M_2(f)$ and $H(f_0)$) such that
$$
M_{s,\kappa}(f(t))   \le C\, M_{s,\kappa}(f(0))+ C\,t, \quad \forall \, t \ge 0.
$$

\end{Cor}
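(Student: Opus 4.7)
My plan is to mirror the proof of Corollary~\ref{cor:moment-exp-MS}(1), replacing the polynomial moment estimate of Lemma~\ref{lem:moment-poly-MS} with that of Lemma~\ref{lem:moment-poly-VS}. Set $\alpha := \frac{|\gamma+1|}{\gamma+4} > 0$, so that the very soft potentials moment bound reads
$$M_l(f(t)) \le C\, M_l(f_0) + C\, \max\!\bigl(1,\; l^{(l-6)\alpha - \gamma}\bigr)\, t \qquad \text{for all } l > 2.$$
First I expand the exponential weight as a power series and swap sum and integral by Tonelli's theorem (all integrands being nonnegative) to get
$$M_{s,\kappa}(f(t)) \;=\; \sum_{j=0}^\infty \frac{\kappa^j}{j!}\, M_{js}(f(t)).$$
The $j=0$ term is just the conserved mass; the finitely many terms with $0 < js \le 2$ can be interpolated between $M_0(f)$ and $M_2(f)$ and are thus uniformly bounded in time; and for $js > 2$ I apply the lemma above.

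Summing the resulting estimates yields
$$M_{s,\kappa}(f(t)) \;\le\; C\, M_{s,\kappa}(f_0) + C\, t \sum_{j \ge j_0} \frac{\kappa^j}{j!} \,(js)^{(js-6)\alpha - \gamma},$$
where $j_0$ is any integer large enough that $j_0 s > 6$. The whole issue is then to show that the series on the right converges under the stated hypotheses on $(s, \kappa)$. Using Stirling's asymptotic $j! \sim (j/e)^j \sqrt{2\pi j}$, the general term is asymptotically of order
$$\frac{\bigl(\kappa\, s^{s\alpha}\, e\bigr)^{j}\; j^{\,j(s\alpha - 1)}}{\sqrt{2\pi j}}\; (js)^{-6\alpha - \gamma}.$$

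Two cases then arise. If $s\alpha < 1$, i.e.\ $s < \frac{\gamma+4}{|\gamma+1|}$, the factor $j^{\,j(s\alpha-1)}$ decays super-exponentially, so the series converges for every $\kappa > 0$. At the critical exponent $s\alpha = 1$, i.e.\ $s = \frac{\gamma+4}{|\gamma+1|}$, this factor disappears and the general term reduces to $(\kappa\, s\, e)^{j}$ times a polynomial in $j$; geometric convergence then requires $\kappa\, s\, e < 1$, which is exactly $\kappa < \frac{1}{es} = \frac{1}{e}\,\frac{|\gamma+1|}{\gamma+4}$, in agreement with the threshold stated in the corollary. Combining the two bounds gives $M_{s,\kappa}(f(t)) \le C\, M_{s,\kappa}(f_0) + C\, t$, as desired.

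The main (only) delicate point is the Stirling bookkeeping in the critical case: one must track the constant $s^{s\alpha} e$ carefully enough to recover the sharp threshold on $\kappa$, rather than some weaker constant. Everything else is a routine application of Lemma~\ref{lem:moment-poly-VS} together with a standard convergence test, plus a harmless separate treatment of the low-order terms via conservation of mass and energy.
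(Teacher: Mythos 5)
Your proof is correct and follows the same route as the paper: expand the stretched-exponential weight in a power series, exchange sum and integral by Tonelli's theorem, invoke the polynomial moment bound from Lemma~\ref{lem:moment-poly-VS}, and establish convergence of the resulting series via Stirling's asymptotics. The Stirling bookkeeping that isolates the geometric factor $\bigl(\kappa\, s^{s\frac{|\gamma+1|}{\gamma+4}}\, e\bigr)^j$ and the super-exponential factor $j^{\,j\left(s\frac{|\gamma+1|}{\gamma+4}-1\right)}$ is precisely what the paper does, and your reduction at the critical exponent to the base $\kappa s e$ recovers the sharp threshold $\kappa < \tfrac{1}{e}\tfrac{|\gamma+1|}{\gamma+4}$ exactly as in the published argument.
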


\begin{proof} 
The proof follows the same arguments as in the proof of Corollary~\ref{cor:moment-exp-MS}-(1). From Lemma~\ref{lem:moment-poly-VS} and writing $e^{\kappa \la v \ra^s} = \sum_{j=0}^\infty \kappa^j \frac{\la v \ra^{js}}{j!}$ we get
$$
M_{s, \kappa} (f(t)) = \sum_{j=0}^\infty \frac{\kappa^j}{j!} \, M_{js}(f(t)) \le C t \sum_{j=0}^\infty \frac{\kappa^j}{j!} \, (sj)^{(sj-6)\frac{|\gamma+1|}{\gamma+4} - \gamma} + C M_{s,\kappa} (f_0).
$$
We then conclude by observing that
$$
\beta_j := \frac{\kappa^j}{j!} \, (sj)^{(sj-6)\frac{|\gamma+1|}{\gamma+4} - \gamma}
= (\kappa s^{s\frac{|\gamma+1|}{\gamma+4}})^{j}  \, (sj)^{-6\frac{|\gamma+1|}{\gamma+4} - \gamma} \, \frac{j^{js\frac{|\gamma+1|}{\gamma+4}}}{j!},
\quad j! \sim (j/e)^j\, \sqrt{2\pi\,j} \quad \text{as}\quad j \to \infty,
$$
which implies $\sum_{j=0}^\infty \beta_j < \infty$ under the assumptions of the corollary.
\end{proof}

\begin{rem}\label{rem:cor:moment-exp-VS}
If we consider $s + \gamma \ge 0$, the same argument presented in the proof of 
Corollary~\ref{cor:moment-exp-MS}-(2) would give us a
 uniform in time bound for the moment $M_{s,\alpha}(f(t))$.
 But the conditions $\gamma \in (-4, -2]$, $0< s \le (\gamma+4)/|\gamma+1|$ 
and $s + \gamma \ge 0$ imply $\gamma=-2$ and $s=2$, so that we recover exactly the result stated in Corollary~\ref{cor:moment-exp-MS}-(2) (for $\gamma=-2$).

\end{rem}

\section{Large time behaviour}

We now turn to the proof of Theorem \ref{thm:decay}. Before starting it, we state an interpolation lemma.

\begin{lem}\label{Halpha}

(i) Let $r \in ]1,3[$, 
 and $\alpha\in \R$.
Define $\theta(r,\alpha) = \frac{9(r-1) + 2 \alpha}{3-r}$. Then, there exists a constant $C:= C(r)>0$, such that
for any $f: = f(v)\ge 0$,
$$
\int_{\R^3} \la v \ra^{\alpha}\, f\, |\log f|\, dv \le C\, \bigg(  M_{\alpha+2}(f) + M_{\theta(r,\alpha)}(f)^{\frac{3-r}{2}} \, \| f \|_{L^3_{-3}(\R^3)}^{\frac32(r-1)} + 1 \bigg).
$$

\smallskip

(ii) Let $r \in ]1,3[$, $s \in ]0,2[$ and $\kappa>0$. Then for any $\kappa_1 > \kappa$ and $\kappa_2 > 2\kappa/(3 - r)$,
one can find a constant $C:= C(r, \kappa, \kappa_1, \kappa_2)>0$ such that for any $f: = f(v)\ge 0$,
$$
\int_{\R^3} e^{\kappa \la v \ra^s}\, f\, |\log f|\, dv \le C\, \bigg(   M_{s,\kappa_1}(f) + M_{s,\kappa_2}(f)^{\frac{3-r}{2}} \, \| f \|_{L^3_{-3}}^{\frac32(r-1)} + 1 \bigg).
$$

\end{lem}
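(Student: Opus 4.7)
The plan is to split the integrand $f\,|\log f|$ into the two regions $\{f \ge 1\}$ and $\{f < 1\}$, handle the lower region by elementary pointwise bounds, and handle the upper region by the crude inequality $\log x \le C_r\, x^{r-1}$ for $x \ge 1$ followed by a one-parameter H\"older interpolation between a weighted $L^1$-moment and the $L^3_{-3}$ norm.

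On $\{f < 1\}$ we have $|\log f| = \log(1/f)$, and we split further according to whether $f > e^{-\la v\ra^2}$ or $f \le e^{-\la v\ra^2}$. In the first subregion, $|\log f| \le \la v\ra^2$, so the contribution is at most $M_{\alpha+2}(f)$ in part (i) and at most $\int e^{\kappa\la v\ra^s}\la v\ra^2 f\,dv \le C_{\kappa_1}\, M_{s,\kappa_1}(f)$ in part (ii), using that for $s<2$ and any $\kappa_1 > \kappa$ one has $e^{\kappa\la v\ra^s}\la v\ra^2 \le C_{\kappa_1} e^{\kappa_1 \la v\ra^s}$. In the second subregion, the elementary bound $x\log(1/x) \le 2\sqrt{x}$ on $(0,1)$ combined with $\sqrt{f} \le e^{-\la v\ra^2/2}$ produces a Gaussian integrand, which is integrable against $\la v\ra^\alpha$ (and against $e^{\kappa\la v\ra^s}$ as long as $s<2$), contributing to the additive constant $+1$.

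On $\{f \ge 1\}$ we use $f\log f \le C_r f^r$ and reduce matters to estimating $\int \la v\ra^\alpha f^r\,dv$. Setting $\lambda = (3-r)/2 \in (0,1)$ so that $\lambda + 3(1-\lambda) = r$, we write
\begin{equation*}
\la v\ra^\alpha f^r = \bigl(\la v\ra^{\alpha_1} f\bigr)^\lambda \, \bigl(\la v\ra^{-9} f^3\bigr)^{1-\lambda},
\end{equation*}
where $\lambda \alpha_1 - 9(1-\lambda) = \alpha$ forces $\alpha_1 = (2\alpha + 9(r-1))/(3-r) = \theta(r,\alpha)$. H\"older's inequality with conjugate exponents $1/\lambda$ and $1/(1-\lambda)$ then yields $\int \la v\ra^\alpha f^r\,dv \le M_{\theta(r,\alpha)}(f)^{(3-r)/2}\,\|f\|_{L^3_{-3}}^{3(r-1)/2}$, which is the second term in (i).

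For part (ii) the only change is that, since $\|f\|_{L^3_{-3}}$ carries no exponential weight, one must place the entire exponential factor on the $L^1$-side of H\"older, writing $e^{\kappa\la v\ra^s}f^r = \bigl(e^{\kappa\la v\ra^s/\lambda}\,\la v\ra^{9(1-\lambda)/\lambda}\, f\bigr)^\lambda\, \bigl(\la v\ra^{-9} f^3\bigr)^{1-\lambda}$ and absorbing the residual polynomial factor into $e^{\kappa_2\la v\ra^s}$ for any $\kappa_2 > \kappa/\lambda = 2\kappa/(3-r)$, since polynomial weights are dominated by any strictly larger stretched-exponential rate. The proof presents no serious obstacle; the only point requiring care is the bookkeeping to match the formula $\theta(r,\alpha)$ in (i) and the threshold conditions $\kappa_1 > \kappa$, $\kappa_2 > 2\kappa/(3-r)$ in (ii), and to ensure $s<2$ wherever the polynomial-times-Gaussian or polynomial-times-exponential absorption steps are invoked.
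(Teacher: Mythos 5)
Your proof is correct and follows essentially the same route as the paper: the same three-way split of the domain (by $f>1$, the intermediate dyadic regime, and the Gaussian-small regime), the same pointwise bound $f\log f\le C_r f^r$ on the large set, and the same H\"older interpolation with conjugate exponents $\tfrac{2}{3-r}$ and $\tfrac{2}{r-1}$ to produce the factors $M_{\theta(r,\alpha)}(f)^{(3-r)/2}\,\|f\|_{L^3_{-3}}^{3(r-1)/2}$ (and their stretched-exponential analogues). The only cosmetic difference is that you use $e^{-\la v\ra^2}$ as the intermediate threshold where the paper uses $e^{-|v|^2}$, which changes nothing.
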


\begin{proof}
(i) We decompose the integral into
$$
\int \la v \ra^{\alpha} f |\log f| = \int \la v \ra^{\alpha} f |\log f| \, \{ \mathbf 1_{f >1} + \mathbf 1_{e^{-|v|^2} < f \le 1} + \mathbf 1_{0 \le f \le e^{-|v|^2}} \} =: I_1 + I_2 + I_3 .
$$
For the term $I_1$, we notice that $ f\, |\log f| \, \mathbf 1_{f >1}\le C(r)\, f^r \, \mathbf 1_{f >1} $ (for $r \in ]1,3[$, and some constant $C(r) >0$).
\par
Thanks to H\"older's inequality, we get
$$
\ba
I_1 &= \int \la v \ra^{\alpha} f \log f \, \mathbf 1_{f >1} \\
&\le C(r)\, \int \la v \ra^\alpha f^r  = C(r)\,\int \la v \ra^{\alpha + \frac92(r-1)} \, f^{\frac12 (3-r)}\,\, \la v \ra^{-\frac92(r-1)} \, f^{\frac32 \,(r-1)} \\
& \le C(r)\, M_{\theta(r,\alpha)}(f)^{\frac{3-r}{2}} \, \| f \|_{L^3_{-3}}^{\frac32(r-1)}.
\ea
$$
For $I_2$,  we use the inequality $|\log f|\, \mathbf 1_{e^{-|v|^2} < f \le 1} 
\le \la v \ra^2\, \mathbf 1_{e^{-|v|^2} < f \le 1}$, in order to obtain
$$
I_2 \le M_{\alpha+2}(f).
$$
Finally, for $I_3$, we use the estimate
 $f\, | \log f|\, \mathbf 1_{0 \le f \le e^{-|v|^2}}  \le C\, \sqrt f\,   \mathbf 1_{0 \le f \le e^{-|v|^2}}\le 
C\, e^{-|v|^2/2}$, so that
$$
I_3 \le C\, \int \la v \ra^\alpha e^{-|v|^2/2} \le C,
$$
for some constant $C>0$.
\smallskip

(ii) We decompose the integral into
$$
\int e^{\kappa \la v \ra^s}\, f\, |\log f| = \int e^{\kappa \la v \ra^s}\, f \, |\log f| \, \{ \mathbf 1_{f >1} + \mathbf 1_{e^{-|v|^2} < f \le 1} + \mathbf 1_{0 \le f \le e^{-|v|^2}} \} =: I'_1 + I'_2 + I'_3 .
$$
For the term $I'_1$, again thanks to H\"older's inequality, we obtain (for any $r \in ]1,3[$ and some constant $C(r)>0$)
$$
\ba
I'_1 &= \int e^{\kappa \la v \ra^s}\, f \,|\log f| \,  \mathbf 1_{f >1} \\
&\le C(r)\, \int e^{\kappa \la v \ra^s}\, f^r
= C(r)\,\int \la v \ra^{\frac92(r-1)}\,  e^{\kappa \la v \ra^s}\, f^{(3-r)/2}\,\, 
  \la v \ra^{-\frac92(r-1)} \, f^{\frac32 \,(r-1)} \\
&\le C(r, \kappa, \kappa_2)\, M_{s,\kappa_2}(f)^{\frac{3-r}{2}} \, \| f \|_{L^3_{-3}}^{\frac32\,(r-1)}.
\ea
$$
For $I'_2$, we get (with constants whose dependence is explicitly stated)
$$ I'_2 \le C(\kappa, \kappa_1)\, M_{s, \kappa_1}(f).
$$
Finally, for $I'_3$, we get
$$
I'_3 \le C(\kappa)\, \int e^{\kappa\, \la v\ra^{s} -|v|^2/2} \le C(\kappa).
$$
\end{proof}

From now on we consider a global (equivalently $H$- or weak) solution $f:= f(t,v) \ge 0$ to the spatially homogeneous Landau equation
with Coulomb potential (\ref{landau}), associated to nonnegative initial data $f_0 \in L^1_2 \cap L \log L (\R^3)$ satisfying the normalization \eqref{f0}. 

%
%
%

We shall use in the sequel the following properties of such a solution: 
\begin{itemize}

\item the conservation of mass, momentum and energy (\ref{conssh}), more precisely
$$
\int f(t,v) \,dv = 1 , \quad
\int f(t,v)\, v\, dv = 0, \quad 
\int f(t,v) \, |v|^2 \, dv = 3 , \quad \forall\, t \ge 0 ;
$$

\item the moments estimates of Lemma~\ref{lem:moment-poly-VS} and Corollary~\ref{cor:moment-exp-VS};

\item the entropy-entropy dissipation inequality
\be\label{boundHD}
H(f(t)) + \int_0^t D(f(\tau)) \, d\tau \le H(f_0) \le C_0 , \quad \forall \, t \ge 0,
\ee

\item $f$ satisfies estimate \eqref{L3<D}.

\end{itemize}

\medskip

We now start the:

\begin{proof}[Proof of Theorem \ref{thm:decay}-(i)]

Recall that we assume that $f_0 \in  L^1_{\ell} (\R^3)$ with 
$  \ell > \frac{19}{2}$, which can be
rewritten as $\ell = \frac{9}{2} + \frac{3k}{2}$ and $ k > \frac{10}{3} $.
\medskip

We split the proof into four steps. All constants $C>0$ in the proof of this part of the theorem 
are simply written by $C$ (and can change from line to line),
 but in fact depend on $\ell, M_\ell(f_0), \bar{H}$, where $\int f_0\,\log f_0 \, dv \le \bar{H}$ (if $f_0$ were not normalized, they also would depend on the mass and energy of $f_0$).   

\medskip\noindent
{\it Step 1.}
First of all, we obtain from Lemma~\ref{lem:moment-poly-VS} (with $\gamma=-3$) that $M_\ell(f(t)) \le C(1+t)$, and interpolating this result with the conservation of the energy $M_2(f(t))=M_2(f_0) = 4$ for any $t \ge 0$, it follows
$$   
M_5(f(t)) \le C\, (1+t)^{\frac{3}{\ell-2}},
$$
since $ \ell > 5$.

\medskip\noindent
{\it Step 2.}
As a consequence we can write, using the entropy dissipation inequality \eqref{eq:D<H} of Corollary~\ref{cerc} with some $R(t)>0$ to be chosen later (note that $C$ depends on $k$ below),
$$
\ba
D(f(t))
&\ge  C\, (1+t)^{ - \frac{3}{\ell-2}} \,R(t)^{-3}\,  H(f(t) | \mu) \\
&\quad
- C\, (1+t)^{  - \frac{3}{\ell-2}} \,R(t)^{-3-k}\, \left\{   
\, \int_{\R^3} f(t)\, |\log  f(t)| \, \la v \ra^{k} \,dv
+ M_{k+2}(f(t)) + 1  \right\}.
\ea
$$
Thanks to Lemma~\ref{Halpha}-(i) with $\alpha=k$, $r=5/3$, so that $\frac32 (r-1) =1$ and $\theta(r,k) = \frac{9+3k}{2}$, we get
$$
\ba
D(f(t))
&\ge   C\,(1+t)^{ - \frac{3}{\ell-2}}\, R(t)^{-3}\, H(f(t)|\mu) \\
&\quad
- C\, (1+t)^{  - \frac{3}{\ell-2}}\, R(t)^{-3-k}\, \bigg( M_{k+2}(f(t)) + M_{\frac{9+3k}{2}}(f(t))^{\frac23} \, \| f(t) \|_{L^3_{-3}} +1 \bigg) .
\ea
$$
Thanks to \eqref{L3<D},  we know that
$$
M_{\frac{9+3k}{2}}(f(t))^{\frac{2}{3}} \, \| f(t) \|_{L^3_{-3}} 
\le C\, M_{\frac{9+3k}{2}}(f(t))^{\frac{2}{3}} \, (  D(f(t)) + 1 ),
$$
thus it follows
$$
\ba
& \Big(1+  C\,(1+t)^{  - \frac{3}{\ell-2}}\, M_{\frac{9+3k}{2}}(f(t))^{\frac{2}{3}}\, R(t)^{-3-k} \Big)\, D(f(t)) \\
&\qquad
 \ge  C\,(1+t)^{ - \frac{3}{\ell-2}}\, R(t)^{-3} H(f(t)|\mu) \\
&\qquad\quad  - C\,(1+t)^{ - \frac{3}{\ell-2}}\, R(t)^{-3-k}\,  
M_{k+2}(f(t)) \\
&\qquad\quad  - C\, (1+t)^{  - \frac{3}{\ell-2}} \,R(t)^{-3-k} \, M_{\frac{9+3k}{2}}(f(t))^{\frac{2}{3}}    .
\ea
$$

\medskip\noindent
{\it Step 3.}
We now choose $R(t) = (1+t)^\nu$ for some $  \nu \in ]0, \frac13 \frac{(3k-1)}{(3k+5)}[ $ such that $  \nu\,k> \frac23 $.
 Note that this is possible whenever 
$  \frac13 \,\frac{(3k-1)}{(3k+5)} \,k > \frac23 $, which is implied
by the condition $  k> \frac{10}{3} $.
\smallskip

 Using Lemma~\ref{lem:moment-poly-VS} (with $\gamma=-3$) again and interpolating the estimate with the conservation of the energy as in step 1, we have (since $\frac{9 + 3k}2 = \ell$)
$$
M_{k+2}(f(t)) \le C\, (1+t)^{\frac{k}{\ell-2}}, \quad
[M_{\frac{9+3k}{2}}(f(t))]^{\frac{2}{3}} \le C\, (1+t)^{\frac23}.
$$
Therefore, noticing that
$$
C\,(1+t)^{ - \frac{3}{\ell-2}} \, M_{\frac{9+3k}{2}}(f)^{\frac{2}{3}}\, R(t)^{-3-k} \le C,
$$
we end up with
$$
\ba
 D(f(t)) 
& \ge  \frac{C\, H(f(t)|\mu) }{(1+t)^{ 3\nu + \frac{6}{5+3k}}} 
-\frac{C}{(1+t)^{ 3\nu+k\nu + \frac{6}{5+3k} - \frac{2k}{5+3k}}}
-\frac{C}{(1+t)^{ 3\nu+k\nu + \frac{6}{5+3k} - \frac23}} \\
& \ge  \frac{C\, H(f(t)|\mu) }{(1+t)^{ 3\nu + \frac{6}{5+3k}}} 
-\frac{C}{(1+t)^{ 3\nu+k\nu + \frac{6}{5+3k} - \frac23}}\, .
\ea
$$

\medskip\noindent
{\it Step 4.}
Denoting $x(t) := H(f(t) | \mu)$, $  a := 3\nu + \frac{6}{5+3k}$ and 
$  b := 3\nu + \nu k + \frac{6}{5+3k} - \frac23$, we plug the last estimate into \eqref{boundHD} and obtain the following inequality (we denote by $C_1>0$ and $C_2>0$ two different constants in order to 
avoid confusions)
\be\label{eq:ode1}
x(t) + C_1 \int_0^t \frac{x(\tau)}{(1+\tau)^a} \, d \tau \le C_0 + C_2 \int_0^t (1+\tau)^{-b} \, d\tau,
\ee
with $0<a<1$ (because $   \nu < \frac13\, \frac{3k-1}{3k+5} $) and $b>a$ 
(because $  \frac13 \,\frac{(3k-1)}{(3k+5)} \,k > \frac23$,
since  $  k> \frac{10}{3}$).

We recall that the generalized Gronwall inequality (see e.g.\ \cite{TosVi2})
$$ 
u(t) \le \phi(t) + \int_0^t \lambda(\tau) u(\tau) d\tau
$$
implies
$$
u(t) \le \phi(0) e^{\int_0^t \lambda(\tau) d\tau }
+ \int_0^t e^{\int_\tau^t \lambda(\sigma) d\sigma} \, \frac{d\phi(\tau)}{d\tau} \, d\tau,
$$
that we apply to \eqref{eq:ode1} and obtain
$$
x(t) \le C_0 \, e^{- C_1\,\frac{(1+t)^{1-a} }{1-a} } 
+ C_2 \, e^{- C_1\,\frac{(1+t)^{1-a} }{1-a} } 
\int_0^t (1+\tau)^{-b}\,
e^{C_1 \,\frac{(1+\tau)^{1-a} }{1-a} }\, d\tau .
$$

Then we observe that thanks to an integration by parts,
$$ e^{- C_1\,\frac{(1+t)^{1-a}}{1-a} } \, \int_0^t (1+\tau)^{-b}\,
e^{C_1 \, \frac{(1+\tau)^{1-a}}{1-a} }\, d\tau $$
$$ =  e^{- C_1\,\frac{(1+t)^{1-a}}{1-a} } \, \bigg[
C_1^{-1}\, e^{C_1\,\frac{(1+t)^{1-a}}{1-a} }\, (1+t)^{a-b} - C_1^{-1}\, e^{\frac{1}{1-a} }
+ C_1^{-1}\,(b-a)\,  \int_0^t (1+\sigma)^{a-b-1}\, 
e^{C_1\,\frac{(1+\sigma)^{1-a}}{1-a} }\, d\sigma \bigg] $$
$$ \le C_1^{-1}\, (1+t)^{a-b} + C_1^{-1}\, (b-a)\,e^{- C_1\,\frac{(1+t)^{1-a}}{1-a} } \, 
\int_0^t (1+\sigma)^{a-b-1}\, 
e^{C_1\,\frac{(1+\sigma)^{1-a}}{1-a} }\, d\sigma $$
$$ \le  C_1^{-1}\, (1+t)^{a-b} +  C_1^{-1}\, (b-a)\, \frac{t}2\, e^{- C_1\,\frac{(1+t)^{1-a}}{1-a} } \, 
(1+ t/2)^{a-b-1}\, e^{C_1\,\frac{(1+ t/2)^{1-a}}{1-a} } + 
 C_1^{-1}\, (b-a)\, \frac{t}2\, (1+ t/2)^{a-b-1}.$$
We therefore see that
$$
x(t) \le C\, (1+t)^{- ( b-a)} = C\,(1+t)^{- \left( \nu k - \frac23    \right)}.
$$
This entails that $H(f(t)|\mu) \le C\, (1+t)^{- \beta}$ for all $ \beta \in ]0, \frac{k}3\, \frac{(3k-1)}{(3k+5)} - \frac23[$,
 and this
ends the proof of  Theorem \ref{thm:decay}-(i).
\end{proof}

We now turn to the 
\begin{proof}[Proof of Theorem \ref{thm:decay}-(ii)]
In the sequel, the constant denoted by $C$ in fact depends on $\kappa$, $s$, and $\bar{H}$.
 We split the proof into the same four steps as in the proof of Theorem \ref{thm:decay}-(i).

\medskip\noindent
{\it Step 1.}
From Corollary~\ref{cor:moment-exp-VS}, we get the estimate
$$
M_{s,\kappa}(f(t)) \le C\,(1+t).
$$
Interpolating this estimate with the conservation of the energy $M_2(f(t)) = M_2(f_0)=4$ (for all $t \ge 0$), we claim that 
$$ 
M_5(f(t)) \le C\,\log^{3/s} (1+t) .
$$
Indeed we can write, for any $r > (6/(\kappa\,s))^{1/s}$, 
$$ 
\ba
\int \la v \ra^{ 5} f(t) 
&= \int_{\la v \ra\le r} \la v \ra^{ 5}\, f(t) + \int_{ \la v \ra > r} \la v \ra^{ 5} \, f(t) \\
&\le r^{ 3}\, M_2(f(t)) + r^{ 3} \,e^{- \frac{\kappa}2\, r^s} \int_{ \la v \ra > r}
 \la v \ra^{2}\,
 e^{ \frac{\kappa}2\, \la v \ra^s} f(t) \\
&\le C\, r^{ 3} \, \big( 1 +  e^{- \frac{\kappa}2\, r^s}\, M_{s,\kappa}(f(t)) \big).
\ea
$$
We choose then $r = \sup[ (\frac{2}{\kappa}\, \log M_{s,\kappa}(f(t))\, )^{1/s}, (6/(\kappa\,s))^{1/s}] $, 
which concludes the claim.

\medskip\noindent
{\it Step 2.}
Using the entropy dissipation inequality \eqref{eq:D<H} of Corollary~\ref{cerc}, we can argue as in the proof of Theorem~\ref{thm:decay}-(i) above and obtain, for any $\kappa_0 \in ]0,\kappa[$
and some $R(t)>0$ to be chosen later:
$$
\ba
D(f(t)) 
&\ge  C \log^{ -3/s}(1+t) \, R(t)^{-3} \,  
H(f(t) | \mu)  \\
&\quad 
- C \log^{ -3/s}(1+t) \, R(t)^{-3} \, e^{-\kappa_0 \, R(t)^s} \bigg\{ 
\int e^{\kappa_0\, \la v \ra^s}\, f(t) \,| \log f(t)|
+\int \la v \ra^2 e^{\kappa_0\, \la v \ra^s}\, f(t) + 1  \bigg\} .
\ea
$$%
Then, for $r \in ]1,3[$, $\kappa_1 > \kappa_0$, $\kappa_2 > 2\kappa_0/(3-r)$, it follows from Lemma~\ref{Halpha} and the bound \eqref{L3<D} that
$$
\ba
D(f(t))
&\ge  C\, \log^{ -3/s}(1+t) \, R(t)^{-3} \, H(f(t) | \mu)\\
&\quad - C\, \log^{ -3/s}(1+t) \, R(t)^{-3} \,  e^{-\kappa_0\, R(t)^s} \, \bigg[
2\, M_{s, \kappa_1}(f(t)) + 1 + M_{s, \kappa_2}(f(t))^{(3-r)/2} \,||f(t)||_{L^3_{-3}}^{\frac32(r-1)} \bigg].
\ea
$$
As a consequence, considering $r = 5/3$, and $\kappa_1 \in ]\kappa_0, \kappa [$, $\kappa_2 > \frac32\,\kappa_0$, we get 
$$
\ba
& D(f(t)) \, \bigg(1 + C\,\log^{ -3/s}(1+t) \, R(t)^{-3} \, 
 e^{-\kappa_0 \,R(t)^s}\, M_{s, \kappa_2}(f(t))^{2/3} \bigg) \\
&\qquad 
\ge  \frac{ C\, H(f(t) | \mu)}{R(t)^3 \, \log^{ 3/s}(1+t)}
- \frac{C\,(1+t)}{e^{\kappa_0\, R(t)^s} \, R(t)^3 \, \log^{ 3/s}(1+t)}  .
\ea
$$%

\medskip\noindent
{\it Step 3.}
We now choose $R(t) =  (1+t)^{\frac{1}{3+s}}\,  (\log(1+t))^{ -\frac{3 + qs}{3s}}$, for some $q\in \R$ to be chosen later, so that, taking $\kappa_0 \in ]0, \frac23\,\kappa [$ and 
$\kappa_2 \in ] \frac32\,\kappa_0, \kappa [$,
$$ 
C \log^{ -3/s}(1+t) \, R(t)^{-3} \,  e^{-\kappa_0 R(t)^s}\, M_{s, \kappa_2}(f(t))^{2/3} \le C . 
$$
Denoting $x(t) := H(f(t) | \mu)$ and gathering the previous estimates together with \eqref{boundHD}, we see that (denoting by $C_1,C_2>0$ the constants to avoid confusions)
$$
\ba
& x(t) + C_1 \int_0^t \frac{x(\tau)}{(1+\tau)^{\frac{3}{3+s} } \, 
(\log(1+\tau))^{-q } } \, d\tau  \\
&\qquad \le C_0 + C_2 \int_0^t e^{- \kappa_0 (1+\tau)^{\frac{s}{3+s}} \,( \log(1+\tau))^{ -\frac{3+qs}{3}}} \, (1+\tau)^{\frac{s}{3+s} }\, (\log(1+\tau))^{q } \, d\tau ,
\ea
$$
thus by the generalized Gronwall's inequality it follows, denoting $A(t) := \int_0^t (1+\tau)^{-\frac{3}{3+s}} \, (\log(1+\tau))^{q } \, d\tau$,
$$
x(t) \le C_0 \, e^{-C_1 A(t)} + C_2\,  e^{-C_1 A(t)} \int_0^t e^{C_1 \, A(\tau)} \, (1+\tau)^{\frac{s}{3+s}}\,
 (\log(1+\tau))^{q} \, e^{-\kappa_0 (1+\tau)^{\frac{s}{3+s}} \, 
 (\log(1+\tau))^{ -\frac{3+qs}{3}}} \, d\tau.
$$
We can now complete the proof by some elementary computations.
We observe that (thanks to an integration by parts)
for all $a> - 1$ and $b\in \R$,
$$ \int_0^t (1+\sigma)^a\, (\log(1+\sigma))^b \, d\sigma = \frac{(1+ t)^{a+1}}{a+1}\,
(\log(1+t))^b + O \big( (1+t)^{a+1}\, (\log(1+t))^{b-1} \big) , $$
so that (for $t$ large enough)
\be\label{bound-int} 
2\,(1+t)^{\frac{s}{3+s}}\,  
 (\log(1+t))^{q } \ge \int_0^t (1+\sigma)^{-\frac{3}{3+s}} \,
 (\log(1+\sigma))^{q } \, d\sigma \ge (1+t)^{\frac{s}{3+s}}\,  
 (\log(1+t))^{q } . 
\ee
We hence split
$$
\ba
x(t) &\le C_0 \, e^{-C_1 \, A(t)}   \\
&\quad 
+ C_2\, e^{-C_1 \, A(t)} \int_{0}^{t/n} e^{C_1 \, A(\tau)} \, (1+\tau)^{\frac{s}{3+s}}\,
 (\log(1+\tau))^{q} \, e^{-\kappa_0 (1+\tau)^{\frac{s}{3+s}} \, 
 (\log(1+\tau))^{ -\frac{3+qs}{3}}} \, d\tau \\
&\quad 
+ C_2\, e^{-C_1 \, A(t)} \int_{t/n}^{t} e^{C_1 \, A(\tau)} \, (1+\tau)^{\frac{s}{3+s}}\,
 (\log(1+\tau))^{q} \, e^{-\kappa_0 (1+\tau)^{\frac{s}{3+s}} \, 
 (\log(1+\tau))^{ -\frac{3+qs}{3}}} \, d\tau \\
& =: e^{-C_1 \, A(t)} \, x(0) + I_1 + I_2,
\ea
$$
for some $n>0$ to be chosen large enough. Thanks to \eqref{bound-int}, we easily get
$$
C_0 \, e^{-C_1 \, A(t)}  \le C_0 \, e^{- C_1\,(1+t)^{\frac{s}{3+s}}\,  
 (\log(1+t))^{q } } .
$$
Moreover we write for the term $I_1$, using \eqref{bound-int},
$$
\ba
I_1 
&\le C_2 \, e^{-C_1 (A(t) - A(t/n))} \, \int_{0}^{t/n}  (1+\tau)^{\frac{s}{3+s}}\,
 (\log(1+\tau))^{q} \, e^{-\kappa_0 (1+\tau)^{\frac{s}{3+s}} \, 
 (\log(1+\tau))^{ -\frac{3+qs}{3}}} \, d\tau   \\
&\le C \, e^{-C_1 (A(t) - A(t/n))} 
\le C \, e^{-C_1 (1+t)^{\frac{s}{3+s}} (\log (1+t))^q + 2 C_1 (1+t/n)^{\frac{3}{3+s}} (\log(1+t/n))^q} \\
&\le C \, e^{-C_1 \, (1+t)^{\frac{s}{3+s}} \, (\log(1+t))^q \, ( 1 - 2 n^{- \frac{3}{(3+s)} } ) }
\le C \, e^{- \frac{C_1}2 \, (1+t)^{\frac{s}{3+s}} \, (\log(1+t))^q  },
\ea
$$
when $n>0$ is chosen large enough (i.e. such that $ 1 - \frac{2}{n^{3/(3+s)}}  \ge 1/2$).
For the other term, we have
$$
\ba
I_2 &\le C_2 \, \int_{t/n}^{t}  (1+\tau)^{\frac{s}{3+s}}\,
 (\log(1+\tau))^{q} \, e^{-\kappa_0 (1+\tau)^{\frac{s}{3+s}} \,
  (\log(1+\tau))^{ -\frac{3+qs}{3}}} \, d\tau \\
&\le C \, e^{-\frac{\kappa_0}{2} (1+t/n)^{\frac{s}{3+s}} \,
 (\log(1+t/n))^{ -\frac{3+qs}{3}}} \int_{t/n}^{t}  (1+\tau)^{\frac{s}{3+s}}\,
 (\log(1+\tau))^{q} \, e^{-\frac{\kappa_0}{2} (1+\tau)^{\frac{s}{3+s}} \,
  (\log(1+\tau))^{ -\frac{3+qs}{3}}} \, d\tau \\
&\le C \, e^{-c\frac{\kappa_0}{2} \, (1+t)^{\frac{s}{3+s}} \,
 (\log(1+t))^{ -\frac{3+qs}{3}}},
\ea
$$
for some constants $c,C>0$.
\par
Finally, taking $ -\frac{3+qs}{3} =q$, that is $ q =- \frac{3}{3+s}$, we deduce that there are constants $C,c >0$ such that
$$ x(t) \le C\, e^{- c\, (1+t)^{\frac{s}{3+s}}\,  
 (\log(1+t))^{ - \frac{3}{3+s} } } ,
$$
which completes the proof. 
\end{proof}

{\bf{Acknowledgement}}:  The research leading to this paper was funded by 
the French ``ANR blanche'' project Kibord: ANR-13-BS01-0004.
K.C. is supported by the Fondation Math\'ematique Jacques Hadamard.


\end{document}